\documentclass[11pt]{amsart}
\usepackage[T1]{fontenc}
\usepackage[utf8]{inputenc}
\usepackage[
  backend=biber,
  style=numeric,
  sorting=none,
  giveninits=false,
  doi=true,
  isbn=true,
  url=true,
  eprint=true
]{biblatex}
\usepackage{lmodern}
\usepackage{microtype}
\usepackage{tabularx}
\usepackage{array}
\usepackage{amsmath,amssymb,amsfonts,amsthm}
\usepackage{mathtools}
\usepackage{enumitem}
\usepackage{tikz-cd}
\usepackage{geometry}
\usepackage{float}

\usepackage[colorlinks=true,linkcolor=blue,citecolor=blue,urlcolor=blue]{hyperref}
\usepackage[nameinlink,noabbrev]{cleveref}

\hypersetup{
  pdftitle={From Finite-Node Conifold Geometry to BPS Structures, I: Algebraic State Data},
  pdfauthor={Abdul Rahman}
}
\numberwithin{table}{section}
\DeclareMathOperator{\Sing}{Sing}
\DeclareMathOperator{\Cone}{Cone}

\numberwithin{equation}{section}

\theoremstyle{plain}
\newtheorem{theorem}{Theorem}[section]
\newtheorem{proposition}[theorem]{Proposition}
\newtheorem{lemma}[theorem]{Lemma}
\newtheorem{corollary}[theorem]{Corollary}

\theoremstyle{definition}
\newtheorem{definition}[theorem]{Definition}

\theoremstyle{remark}
\newtheorem{remark}{Remark}[section]

\makeatletter



\makeatother

\newcommand{\Q}{\mathbb{Q}}

\newcommand{\C}{\mathbb{C}}

\newcommand{\Perv}{\mathrm{Perv}}
\newcommand{\Hom}{\mathrm{Hom}}
\newcommand{\Ext}{\mathrm{Ext}}

\newcommand{\var}{\mathrm{var}}
\newcommand{\rat}{\mathrm{rat}}

\title[From Finite-Node Conifold Geometry to BPS Structures I]{From Finite-Node Conifold Geometry to BPS Structures I: Algebraic State Data}
\author{Abdul Rahman}
\thanks{Email: arahman@alum.howard.edu}
\subjclass[2020]{14D06, 32S30, 14F43, 18G80}
\keywords{Conifold degeneration, perverse sheaves, mixed Hodge modules, perverse schobers, vanishing cycles, state data}

\begin{document}

\begin{abstract}
Let $\pi:X\to \Delta$ be a one-parameter degeneration whose central fiber $X_0$ is a complex threefold with finitely many ordinary double points $\Sigma=\{p_1,\dots,p_r\}\subset X_0$. Associated with this degeneration is the corrected finite-node perverse extension, together with its mixed-Hodge-module refinement and a finite-node schober datum whose perverse-sheaf shadow is identified with the corrected perverse sheaf $\mathcal P$. The purpose of the present paper is to extract from these finite-node geometric, extension-theoretic, mixed-Hodge, and categorical inputs the intrinsic algebraic state data carried by the degeneration. More precisely, we isolate the finite localized quotient $Q_\Sigma:=\bigoplus_{k=1}^r i_{k*}\Q_{\{p_k\}}$, the nodewise coupling space $E_\Sigma:=\Ext^1_{\Perv(X_0;\Q)}(Q_\Sigma,IC_{X_0})$, its canonical nodewise decomposition $E_\Sigma\cong\bigoplus_{k=1}^r \Q e_k$, and the coefficient vector $c_\Sigma=(c_1,\dots,c_r)\in\Q^r$ defined by $[\mathcal P]_{\mathrm{perv}}=\sum_{k=1}^r c_k e_k$. We then prove that these state variables are compatible with both the mixed-Hodge-module lift and the schober realization of $\mathcal P$, so that the same finite-node architecture appears simultaneously in perverse, mixed-Hodge, and categorical form. The resulting package $(V_\Sigma,E_\Sigma,c_\Sigma)$ is the intrinsic algebraic state data attached to the finite-node conifold degeneration. It provides the first algebraic layer in the passage from finite-node geometry to later incidence, quiver, stability, BPS-spectral, and wall-crossing structures.
\end{abstract}
\maketitle
\section{Introduction}

Let $\pi:X\to \Delta$ be a one-parameter degeneration whose central fiber \(X_0\) has finitely many ordinary double points $\Sigma=\{p_1,\dots,p_r\}\subset X_0$. The basic corrected object associated with this degeneration is the perverse sheaf
\[
\mathcal P:=\Cone(\var_F)[-1],
\qquad
F:=\Q_X[3],
\]
constructed from the nearby-/vanishing-cycle formalism. In the finite-node case, the corrected object fits into the exact sequence
\begin{equation}\label{eq:intro-corrected-extension}
0\longrightarrow IC_{X_0}\longrightarrow \mathcal P\longrightarrow Q_\Sigma\longrightarrow 0,
\qquad
Q_\Sigma:=\bigoplus_{k=1}^r i_{k*}\Q_{\{p_k\}}.
\end{equation}
The finite singular contribution is therefore a direct sum of point-supported rank-one sectors, one for each node. At the same time, the corrected global extension class is not merely a formal sum of local pieces: it is controlled by a distinguished nodewise extension package
\begin{equation}\label{eq:intro-nodewise-ext}
\Ext^1_{\Perv(X_0;\Q)}(Q_\Sigma,IC_{X_0})
\cong
\bigoplus_{k=1}^r
\Ext^1_{\Perv(X_0;\Q)}\!\bigl(i_{k*}\Q_{\{p_k\}},IC_{X_0}\bigr)
\cong
\bigoplus_{k=1}^r \Q e_k,
\end{equation}
and the global corrected class admits a unique expansion
\begin{equation}\label{eq:intro-coefficient-vector}
[\mathcal P]_{\mathrm{perv}}
=
\sum_{k=1}^r c_k e_k,
\qquad
c_\Sigma:=(c_1,\dots,c_r)\in \Q^r.
\end{equation}
These finite node-indexed data are the intrinsic algebraic state variables extracted from the degeneration. The point of the present paper is to isolate these state variables systematically and prove that they are compatible across the three realizations already established in the earlier papers: \textit{perverse}, \textit{mixed-Hodge}, and \textit{categorical}. On the mixed-Hodge side, the corrected extension lifts to an object
\[
\mathcal P^H\in MHM(X_0)
\]
fitting into the exact sequence
\begin{equation}\label{eq:intro-mhm-extension}
0\longrightarrow IC^H_{X_0}
\longrightarrow \mathcal P^H
\longrightarrow
\bigoplus_{k=1}^r i_{k*}\Q^H_{\{p_k\}}(-1)
\longrightarrow 0,
\qquad
\rat(\mathcal P^H)\cong \mathcal P.
\end{equation}
On the categorical side, the finite-node schober datum
\begin{equation}\label{eq:intro-schober}
S_\Sigma=
\Bigl(
\mathcal C_{\mathrm{bulk}},
\{\mathcal C_{p_k}\}_{k=1}^r,
\{\Phi_k,\Psi_k\}_{k=1}^r,
Sh(S_\Sigma)
\Bigr)
\end{equation}
has one localized sector \(\mathcal C_{p_k}\) for each node \(p_k\in\Sigma\) and shadow
\[
Sh(S_\Sigma)\cong \mathcal P.
\]
Thus the same finite-node architecture appears simultaneously as a corrected perverse extension, a mixed-Hodge-module extension, and a finite-node schober shadow.

\subsection{Why finite-node algebraic state data}

We observe that the finite-node conifold setting already has the formal shape of a finite system of localized sectors coupled to a bulk geometric sector. The singular quotient
\[
Q_\Sigma=\bigoplus_{k=1}^r i_{k*}\Q_{\{p_k\}}
\]
is a finite direct sum of point-supported rank-one terms; the nodewise extension decomposition \eqref{eq:intro-nodewise-ext} isolates a distinguished one-dimensional coupling line at each node; and the corrected global extension class is encoded by the finite coefficient vector \(c_\Sigma\). On the mixed-Hodge side, the same finite localized quotient appears as
\[
\bigoplus_{k=1}^r i_{k*}\Q^H_{\{p_k\}}(-1),
\]
while on the categorical side the same node set indexes the localized schober sectors \(\mathcal C_{p_k}\). The natural problem and role of the present paper is therefore not to invent an algebraic language from outside, but to extract the finite algebraic state variables already implicit in the geometry.  Yet our task here is more basic and more rigid: to prove that a finite-node conifold degeneration canonically determines intrinsic algebraic state data
\[
(V_\Sigma,E_\Sigma,c_\Sigma)
\]
compatible with the corrected perverse extension, its mixed-Hodge lift, and its finite-node schober shadow. Note that a stability formalism, a wall-crossing formalism, or a full BPS-spectrum calculation is not treated here but left for future work.

\subsection{Relation to earlier work}

The starting point is the corrected perverse object constructed in the ordinary double point case. In the single-node setting, that object was defined from nearby and vanishing cycles and shown to be the canonical perverse extension carrying the point-supported rank-one singular contribution. The next step was to place that construction into the nearby-cycle and limiting-mixed-Hodge-theoretic setting, thereby identifying the common origin of the corrected perverse singular term and the vanishing contribution in the limiting mixed Hodge structure. In the finite-node setting, the corrected perverse extension was then lifted to a mixed Hodge module
\[
\mathcal P^H\in MHM(X_0)
\]
with quotient
\[
\bigoplus_{k=1}^r i_{k*}\Q^H_{\{p_k\}}(-1),
\]
and the nodewise organization of the global corrected extension class was made explicit. Finally, the finite-node schober paper constructed a schober datum
\[
S_\Sigma=
\Bigl(
\mathcal C_{\mathrm{bulk}},
\{\mathcal C_{p_k}\}_{k=1}^r,
\{\Phi_k,\Psi_k\}_{k=1}^r,
Sh(S_\Sigma)
\Bigr)
\]
with one localized categorical sector per node and shadow equal to \(\mathcal P\). See \cite{RahmanSchoberPaper,RahmanPerverseNearbyCycles,RahmanMultiNodeSchoberPaper, RahmanMixedHodgeModules}. The present paper extracts from these earlier results the intrinsic finite algebraic state variables attached to the degeneration. In that sense, it is the first of the algebraic papers:
\[
\text{geometry}
\;\longrightarrow\;
\text{state data}
\;\longrightarrow\;
\text{incidence/quiver data}
\;\longrightarrow\;
\text{BPS spectra and wall crossing}.
\]

\subsection{Primitive local anchoring and the role of $e_k$}

The distinguished nodewise generators
\[
e_k \in \Ext^1_{\Perv(X_0;\Q)}\!\bigl(i_{k*}\Q_{\{p_k\}}, IC_{X_0}\bigr)
\]
should not be viewed merely as a convenient basis of the extension space
\[
E_\Sigma \cong \bigoplus_{k=1}^r \Q e_k.
\]
Rather, they should be read as the first algebraic shadows of the nodewise primitive local package already visible in the corrected perverse, mixed-Hodge, and categorical realizations of the finite-node degeneration. On the perverse side, they arise from the decomposition of the corrected finite-node extension space associated with
\[
0 \to IC_{X_0} \to P \to \bigoplus_{k=1}^r i_{k*}\Q_{\{p_k\}} \to 0.
\]
On the mixed-Hodge side, the same nodewise architecture is reflected by the quotient
\[
\bigoplus_{k=1}^r i_{k*}\Q^H_{\{p_k\}}(-1),
\]
while on the categorical side the same finite node set labels the localized schober sectors
\[
\{C_{p_k}\}_{k=1}^r.
\]

Thus the present paper should be read as isolating the first algebraic package attached to a primitive local nodewise structure already visible across the perverse, mixed-Hodge, and categorical realizations of the finite-node degeneration. The later incidence, quiver, stability, BPS, and wall-crossing papers will not introduce unrelated algebraic labels from outside the geometry; rather, they will organize dynamically the same primitive nodewise directions first isolated here. In this sense, the state-data package extracted below is already geometrically anchored: the vectors $e_k$ are the first algebraic carriers of the primitive local sectors attached to the nodes of the degeneration.

\subsection{Role of this work in the larger sequence}

This work is the first algebraic extraction paper in the finite-node program. Its role is to isolate the intrinsic state variables attached to the corrected finite-node package before any interaction, stability, or BPS formalism is introduced. Concretely, the output of the present paper is the package
\[
(V_\Sigma, E_\Sigma, c_\Sigma),
\]
consisting of the finite vertex set, the distinguished nodewise coupling space, and the coefficient vector of the corrected global extension class.

This is the algebraic layer required before one can ask how localized sectors interact, how an incidence or quiver-theoretic package should be assembled, or how later stability and BPS data should be organized. In particular, the present paper does not yet study the functorial coupling pattern carried by the attachment functors of the finite-node schober package, and it does not yet pass to incidence relations, graded interaction laws, stability data, chamber structures, or BPS indices. Those later structures require that the finite-node state variables first be isolated in a theorem-level way.

Accordingly, the present paper should be read as the state-variable paper of the sequence. It identifies the intrinsic finite algebraic variables canonically determined by the degeneration and thereby supplies the minimal algebraic input for the later incidence/quiver, stability, BPS, and wall-crossing papers.

\subsection{Relation to the BPS/quiver literature}

In the BPS-quiver literature, quivers and related algebraic data serve as the finite algebraic interface through which one studies basic sectors, interaction channels, stability, and wall crossing. Geometric input is used to determine BPS quivers or quiver-type data, and the BPS spectrum is then analyzed through the resulting representation-theoretic or quantum-mechanical problem; see, for example, \cite{AlimCecottiCordovaEspahbodiRastogiVafa_BPSQuivers,AlimCecottiCordovaEspahbodiRastogiVafa_N2Quivers,Denef_QuantumQuivers,Cecotti_QuiverBPS}. The present paper sits before that stage. Its purpose is to isolate the intrinsic state variables from the finite-node corrected perverse/Hodge/schober architecture so that later quiver, stability, and wall-crossing formalisms have a theorem-level algebraic input.

\subsection{What this work does not yet do}

The present paper deliberately stops at the state-data layer. It does not yet define interaction or incidence data between the localized sectors, does not yet assemble a quiver-theoretic package, and does not yet introduce stability conditions, chamber structures, BPS indices, or wall-crossing laws. Those later structures require additional input not present in the intrinsic state variables alone.

More precisely, the passage from the nodewise state-data package
\[
(V_\Sigma, E_\Sigma, c_\Sigma)
\]
to interaction and incidence data requires the functorial bulk/localized coupling pattern carried by the finite-node schober package. The passage from incidence data to later dynamical structures requires, in turn, further algebraic and transport-theoretic refinements. Thus the role of Part I is intentionally limited but foundational: it isolates the finite state variables without which the later interaction, chamber, BPS, and wall-crossing constructions cannot even be formulated in intrinsic finite-node terms.

In this sense, the present paper supplies the minimal algebraic package attached to the corrected finite-node geometry. The next paper in the sequence will extract the corresponding interaction/incidence layer and thereby move from state variables to the first theorem-level quiver-theoretic structure.

\subsection{Main results}

The main results of the paper are the following.

\begin{enumerate}
\item[\textbf{(R1)}]
The corrected finite-node perverse extension determines the finite localized quotient
\[
Q_\Sigma=\bigoplus_{k=1}^r i_{k*}\Q_{\{p_k\}}
\]
and the nodewise coupling space
\[
E_\Sigma:=\Ext^1_{\Perv(X_0;\Q)}(Q_\Sigma,IC_{X_0})
\cong
\bigoplus_{k=1}^r \Q e_k.
\]

\item[\textbf{(R2)}]
The corrected global extension class determines the distinguished coefficient vector
\[
[\mathcal P]_{\mathrm{perv}}=\sum_{k=1}^r c_k e_k,
\qquad
c_\Sigma=(c_1,\dots,c_r)\in \Q^r.
\]

\item[\textbf{(R3)}]
The mixed-Hodge-module lift
\[
0\to IC^H_{X_0}\to \mathcal P^H\to \bigoplus_{k=1}^r i_{k*}\Q^H_{\{p_k\}}(-1)\to 0
\]
realizes the same finite-node state-data architecture as the corrected perverse extension.

\item[\textbf{(R4)}]
The finite-node schober datum
\[
S_\Sigma=
\Bigl(
\mathcal C_{\mathrm{bulk}},
\{\mathcal C_{p_k}\}_{k=1}^r,
\{\Phi_k,\Psi_k\}_{k=1}^r,
Sh(S_\Sigma)
\Bigr)
\]
has shadow
\[
Sh(S_\Sigma)\cong \mathcal P
\]
and is indexed by the same finite node set \(\Sigma\), thereby supplying the categorical realization of the same state variables.

\item[\textbf{(R5)}]
The three realizations above determine a symbolic dictionary at the state-data level, encoding the passage from the finite node set, localized quotient, nodewise coupling space, corrected global class, and categorical localized sectors to the common algebraic state-data package
\[
\Sigma
\rightsquigarrow
V_\Sigma,
\qquad
Q_\Sigma
\rightsquigarrow
\bigoplus_{k=1}^r \Q v_k,
\qquad
[\mathcal P]_{\mathrm{perv}}
\rightsquigarrow
c_\Sigma.
\]
\end{enumerate}

\subsection{Algebraic state-data dictionary}\label{sec:state-data-dictionary}

The purpose of this section is to record, in explicit symbolic form, the passage from the finite-node geometric and categorical data of the degeneration to the algebraic state variables extracted in the previous sections. The point is not merely notational compression. The finite-node corrected perverse extension, its mixed-Hodge-module lift, and the finite-node schober realization all determine the same node-indexed architecture, and the present dictionary isolates the algebraic objects that will serve as the intrinsic state-data layer for the later incidence, quiver, stability, and BPS analyses.

We use the symbol $\rightsquigarrow$ to denote the canonical passage from a geometric, sheaf-theoretic, mixed-Hodge, or categorical datum on the left to its associated algebraic state datum on the right. Thus $A \rightsquigarrow B$
means that $B$ is the algebraic image extracted from $A$ by the constructions proved in the preceding sections. In particular, $\rightsquigarrow$ is not asserted to be a morphism in a fixed category, nor an equivalence relation, nor a quiver arrow. It is a structural extraction symbol: it records the theorem-level assignment from finite-node input data to algebraic state data. The reason for assembling these assignments in one place is that the finite-node degeneration carries several parallel realizations of the same underlying architecture: \textit{perverse}, \textit{mixed-Hodge}, and \textit{categorical}. 

The assignments listed below should be read as canonical state-data extractions: $\text{input datum} \rightsquigarrow \text{algebraic state datum}$. The state-data extraction is summarized by the following rubric:
\[
\Sigma=\{p_1,\dots,p_r\}
\rightsquigarrow
V_\Sigma=\{v_1,\dots,v_r\},
\qquad
Q_\Sigma=\bigoplus_{k=1}^r i_{k*}\Q_{\{p_k\}}
\rightsquigarrow
\bigoplus_{k=1}^r \Q v_k,
\]
\[
E_\Sigma=
\Ext^1_{\Perv(X_0;\Q)}(Q_\Sigma,IC_{X_0})
\rightsquigarrow
\bigoplus_{k=1}^r \Q e_k,
\qquad
[\mathcal P]_{\mathrm{perv}}=\sum_{k=1}^r c_k e_k
\rightsquigarrow
c_\Sigma=(c_1,\dots,c_r)\in \Q^r.
\]
The present dictionary makes explicit how these realizations feed the same algebraic package
\[
(V_\Sigma,E_\Sigma,c_\Sigma).
\]
In this sense, the dictionary is the first algebraic rubric for the finite-node conifold degeneration: it records the state variables canonically attached to the degeneration before any later incidence, quiver, stability, or wall-crossing structures are imposed.
\subsection{Scope and organization}

The scope of the present paper is confined to the extraction of algebraic state data from the finite-node corrected perverse extension, its mixed-Hodge-module lift, and its finite-node schober shadow. The paper does not yet assemble a full quiver incidence formalism, and it does not yet construct stability conditions, BPS indices, or wall-crossing formulas. Those belong to the later papers built on top of the present state-data layer.

Section~\ref{sec:geometric-setup} fixes the finite-node geometric setting and notational conventions. Section~\ref{sec:perverse-state-data} recalls the corrected finite-node perverse extension and extracts the localized quotient and nodewise coupling space. Section~\ref{sec:coefficient-vector} isolates the coefficient vector of the corrected global extension class. Section~\ref{sec:mixed-hodge-state-data} recalls the mixed-Hodge-module lift and proves state-data compatibility under realization. Section~\ref{sec:schober-shadow-state-data} recalls the finite-node schober datum and its shadow relation to \(\mathcal P\). Section~\ref{sec:symbolic-dictionary} records the symbolic dictionary and defines the finite-node algebraic state-data package. Section~\ref{sec:compatibility-invariance} proves compatibility and invariance statements for the extracted state data.

\section{Geometric and sheaf-theoretic setup}\label{sec:geometric-setup}

We fix the geometric and sheaf-theoretic setting used throughout the paper. The local topology of the smoothing is encoded by the Milnor fibers at the singular points, while the sheaf-theoretic objects of interest live on the singular central fiber $X_0$. In particular, the nearby-cycle complex, the vanishing-cycle complex, and the corrected perverse object are objects on $X_0$, whereas the Milnor fibers supply the local topological input controlling their stalk behavior at the nodes; see, for example, \cite{MilnorSingularPoints,DimcaSheaves,BBD,KS,SaitoDuality,SaitoMHM}.

\subsection{Finite-node conifold degenerations}

We first record the precise class of degenerations considered below.

\begin{definition}[Finite-node conifold degeneration]
\label{def:finite-node-conifold-degeneration}
A \emph{finite-node conifold degeneration} consists of a proper holomorphic map
\[
\pi:\mathcal X\to\Delta
\]
from a complex analytic space $\mathcal X$ to a sufficiently small complex disk
\[
\Delta=\{t\in\C:\ |t|<\epsilon\},
\]
satisfying the following conditions:
\begin{enumerate}
\item $\mathcal X$ is a complex analytic space of pure complex dimension $4$;
\item for every sufficiently small $t\neq 0$, the fiber
\[
X_t:=\pi^{-1}(t)
\]
is a smooth complex threefold;
\item the central fiber
\[
X_0:=\pi^{-1}(0)
\]
has singular locus
\[
\Sigma=\Sing(X_0)=\{p_1,\dots,p_r\},
\]
where $\Sigma$ is a finite set;
\item each singular point $p_k\in\Sigma$ is an ordinary double point; equivalently, for each $p_k\in\Sigma$, there exists a neighborhood of $p_k$ in $\mathcal X$ and local holomorphic coordinates
\[
(x_1,x_2,x_3,x_4,t)
\]
in which $\pi$ is locally given by
\[
x_1^2+x_2^2+x_3^2+x_4^2=t;
\]
\item $X_0$ is smooth away from $\Sigma$.
\end{enumerate}
When the fibers $X_t$ for $t\neq 0$ are Calabi--Yau threefolds, we call $\pi:\mathcal X\to\Delta$ a \emph{Calabi--Yau finite-node conifold degeneration}.
\end{definition}

\begin{remark}
\label{rem:conifold-degeneration-scope}
Definition~\ref{def:finite-node-conifold-degeneration} isolates the geometric input used in the present paper. The arguments below depend on the one-parameter degeneration structure, the fact that the general fiber is a smooth complex threefold, and the assumption that the singularities of the central fiber are finitely many ordinary double points. The Calabi--Yau condition is important for the intended geometric and physical applications, but the formal sheaf-theoretic and categorical constructions developed below depend primarily on the finite-node ordinary-double-point degeneration structure itself.
\end{remark}

\subsection{Smooth locus and node inclusions}

We next fix the basic notation attached to the central fiber and its singular locus.

\begin{definition}[Smooth locus and node inclusions]
\label{def:smooth-locus-node-inclusions}
Let $\pi:\mathcal X\to\Delta$ be a finite-node conifold degeneration in the sense of Definition~\ref{def:finite-node-conifold-degeneration}. We write
\[
X_0:=\pi^{-1}(0),\qquad
\Sigma=\Sing(X_0)=\{p_1,\dots,p_r\},\qquad
U:=X_0\setminus\Sigma.
\]
Thus $U$ is the smooth locus of the central fiber $X_0$. We denote by
\[
j:U\hookrightarrow X_0
\]
the open inclusion, and for each $p_k\in\Sigma$, we denote by
\[
i_k:\{p_k\}\hookrightarrow X_0
\]
the closed inclusion of the $k$-th singular point.
\end{definition}

\begin{remark}
\label{rem:bulk-localized-geometric-split}
The decomposition $X_0=U\sqcup \Sigma$ is the geometric source of the bulk/localized distinction used throughout the paper. The bulk geometry is carried by the smooth locus $U$, while the localized contributions are concentrated at the nodes $p_k\in\Sigma$.
\end{remark}

\subsection{Local ordinary double point model}

Since all singular points of the central fiber are ordinary double points, the local model is uniform.

\begin{definition}[Local ordinary double point model]
\label{def:local-odp-model}
A \emph{local ordinary double point degeneration model} is the holomorphic map
\[
\pi_{\mathrm{loc}}:\C^4\to\C,
\qquad
\pi_{\mathrm{loc}}(x_1,x_2,x_3,x_4)=x_1^2+x_2^2+x_3^2+x_4^2.
\]
Its central fiber
\[
\pi_{\mathrm{loc}}^{-1}(0)
\]
has an isolated ordinary double point at the origin, and for every $t\neq 0$, the fiber
\[
\pi_{\mathrm{loc}}^{-1}(t)
\]
is smooth.
\end{definition}

\begin{remark}
\label{rem:local-model-role}
By Definition~\ref{def:finite-node-conifold-degeneration}, each singular point $p_k\in\Sigma$ of the central fiber is locally analytically equivalent to the model of Definition~\ref{def:local-odp-model}. This local model controls the local Milnor-fiber topology and hence the rank-one local vanishing sector at each node.
\end{remark}

\subsection{Milnor fibers and local vanishing topology}

We now record the local topological input attached to a singular point.

\begin{definition}[Local Milnor ball and local smoothing fiber]
\label{def:local-milnor-ball-fiber}
Let $\pi:\mathcal X\to\Delta$ be a finite-node conifold degeneration, and let $p_k\in\Sigma$. Choose a sufficiently small closed ball
\[
B_{\epsilon}(p_k)\subset \mathcal X
\]
centered at $p_k$ and a sufficiently small disk
\[
\Delta_\eta=\{t\in\C:\ |t|<\eta\}\subset \Delta
\]
such that the restriction of $\pi$ to
\[
B_{\epsilon}(p_k)\cap \pi^{-1}(\Delta_\eta)
\]
contains no singular point other than $p_k$. For each sufficiently small $t\in\Delta_\eta^\ast:=\Delta_\eta\setminus\{0\}$, the local smoothing fiber at $p_k$ is defined to be
\[
F_{p_k}:=X_t\cap B_{\epsilon}(p_k).
\]
\end{definition}

\begin{definition}[Milnor fiber]
\label{def:milnor-fiber}
In the setting of Definition~\ref{def:local-milnor-ball-fiber}, the space
\[
F_{p_k}=X_t\cap B_{\epsilon}(p_k)
\]
for sufficiently small $t\neq 0$ is called the \emph{Milnor fiber} of the singularity $p_k$. Its homotopy type is independent of the sufficiently small choices of $\epsilon$ and $t$; see \cite{MilnorSingularPoints,DimcaSheaves}.
\end{definition}

\begin{proposition}
\label{prop:odp-milnor-fiber-s3}
Let $p_k\in\Sigma$ be an ordinary double point of the central fiber of a finite-node conifold degeneration. Then the Milnor fiber $F_{p_k}$ has the homotopy type of $S^3$. In particular,
\[
\widetilde H^m(F_{p_k};\Q)=0
\quad\text{for }m\neq 3,
\qquad
\widetilde H^3(F_{p_k};\Q)\cong \Q.
\]
\end{proposition}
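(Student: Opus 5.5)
The plan is to reduce to the local model and then identify the Milnor fiber explicitly with the tangent bundle of $S^3$. By condition (4) of Definition~\ref{def:finite-node-conifold-degeneration}, a neighborhood of $p_k$ in $\mathcal X$ is biholomorphic to a neighborhood of the origin in $\C^4$, with $\pi$ corresponding to $\pi_{\mathrm{loc}}$ of Definition~\ref{def:local-odp-model}. Since the homotopy type of the Milnor fiber does not depend on the sufficiently small choices of $\epsilon$ and $t$ (Definition~\ref{def:milnor-fiber}), we may compute $F_{p_k}$ in the model, using a round ball in these coordinates. So it suffices to show that
\[
F=\{x\in\C^4:\ x_1^2+x_2^2+x_3^2+x_4^2=t,\ |x|\le\epsilon\}
\]
is homotopy equivalent to $S^3$ for small $t\neq0$.

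For the model computation, I would first rotate $x\mapsto \lambda x$ with $\lambda^2=t/|t|$. This preserves the round ball and lets us take $t=s>0$ real. Write $x=u+iv$ with $u,v\in\mathbb R^4$. The equation becomes the pair of conditions
\[
|u|^2-|v|^2=s,\qquad u\cdot v=0,
\]
and the ball condition becomes $|u|^2+|v|^2\le\epsilon^2$. Now define the map
\[
(u,v)\mapsto \bigl(u/|u|,\ v\bigr),
\]
which is well defined because $|u|^2\ge s>0$. It sends $F$ diffeomorphically onto the disk bundle
\[
\{(a,v)\in TS^3:\ |v|^2\le(\epsilon^2-s)/2\},
\]
since $u$ is recovered as $u=\sqrt{s+|v|^2}\,a$. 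The straight-line homotopy $v\mapsto\tau v$ preserves this disk bundle and deformation retracts it onto the zero section $\{v=0\}\cong S^3$. Hence $F_{p_k}\simeq S^3$.

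The cohomology statement then follows from the standard computation for the sphere: $\widetilde H^m(S^3;\Q)$ vanishes for $m\neq3$ and equals $\Q$ for $m=3$.

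The main obstacle is justifying the first step: that the intrinsically defined $F_{p_k}$, built from a ball in $\mathcal X$, agrees up to homotopy with the round-ball fiber in the local coordinates. I would handle this by Milnor's conic structure and fibration theorem, as cited, which says the homotopy type is independent of the choice of good neighborhood system. Concretely, a coordinate ball and the image of a small round ball are nested one inside the other, and the restriction of $\pi$ to the region between them, over a small punctured disk, is a trivial fibration. This gives homotopy equivalences between the corresponding Milnor fibers.
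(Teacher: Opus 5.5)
Your proposal is correct, but it argues differently from the paper. The paper's proof is essentially a citation. It notes that $p_k$ is locally analytically the model $x_1^2+\cdots+x_4^2=t$, then invokes the classical theory of isolated hypersurface singularities (Milnor's bouquet theorem, $F\simeq\bigvee_{\mu}S^{3}$, with Milnor number $\mu=1$ for the $A_1$ quadric).

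You instead compute the model fiber by hand. After rotating $t$ to $s=|t|>0$ and writing $x=u+iv$, the map $(u,v)\mapsto(u/|u|,v)$ identifies $F$ with the disk bundle $\{|v|^2\le(\epsilon^2-s)/2\}\subset TS^3$, which retracts onto the zero section. The details hold: the rotation direction, the inverse $u=\sqrt{s+|v|^2}\,a$, and the ball condition $s+2|v|^2\le\epsilon^2$ are all right.

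The two routes buy different things. The citation route works uniformly for every isolated hypersurface singularity and reads off the reduced cohomology from $\mu$. Your route is self-contained and gives the diffeomorphism type, not just the homotopy type. It also exhibits the vanishing cycle concretely as the real sphere $\{v=0\}=S^3_{\sqrt{s}}\subset\mathbb R^4$, which is the geometric generator behind the rank-one sector in Proposition~\ref{prop:vanishing-stalks-controlled-by-milnor}.

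One small correction to your final paragraph. Local triviality of $\pi$ over a punctured disk, on the region between two nested balls, does not by itself make that region a collar in each fiber. So it does not directly give the homotopy equivalence of Milnor fibers. The standard fix is to interleave balls of the two kinds, $B_3\subset B'\subset B_1\subset B''$. Inclusions between balls of the same kind, cut out by one distance function with no critical points on the fibers, are homotopy equivalences, and a two-out-of-six argument then finishes. This independence is exactly what Definition~\ref{def:milnor-fiber} already asserts, and the paper's proof relies on it just as implicitly. So it is not a gap relative to the paper.
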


\begin{proof}
By Definition~\ref{def:finite-node-conifold-degeneration}, the singularity at $p_k$ is locally analytically equivalent to the ordinary double point model of Definition~\ref{def:local-odp-model}. The Milnor fiber of the hypersurface singularity
\[
x_1^2+x_2^2+x_3^2+x_4^2=t
\]
has the homotopy type of $S^3$ by the classical theory of isolated hypersurface singularities; see \cite[Chapter~5]{MilnorSingularPoints} and \cite[Section~4.1]{DimcaSheaves}. The cohomological statement follows immediately.
\end{proof}

\begin{remark}
\label{rem:one-local-sector-per-node}
Proposition~\ref{prop:odp-milnor-fiber-s3} is the local topological source of the one-node/one-sector principle used throughout the paper. Each ordinary double point contributes one rank-one middle-dimensional vanishing sector.
\end{remark}

\subsection{Nearby cycles, vanishing cycles, and the variation morphism}

We now pass from the local topology of the Milnor fiber to the sheaf-theoretic objects on the singular central fiber.

\begin{definition}[Nearby and vanishing cycles]
\label{def:nearby-vanishing-cycles}
Let $\pi:\mathcal X\to\Delta$ be a finite-node conifold degeneration, and let
\[
F:=\Q_{\mathcal X}[3].
\]
We denote by
\[
\psi_\pi(F)
\qquad\text{and}\qquad
\phi_\pi(F)
\]
the nearby-cycle and vanishing-cycle complexes of $F$ on the central fiber $X_0$; see \cite{BBD,KS,DimcaSheaves,SaitoDuality,SaitoMHM}.
\end{definition}

\begin{remark}
\label{rem:cycles-live-on-central-fiber}
The objects $\psi_\pi(F)$ and $\phi_\pi(F)$ live on the singular central fiber $X_0$, not on the Milnor fiber. Their local stalks at the singular points are controlled by the topology of the corresponding Milnor fibers. In particular, the Milnor fiber is the local topological model behind the stalks of the vanishing-cycle complex, not the space on which the final corrected perverse object is defined.
\end{remark}

\begin{proposition}
\label{prop:vanishing-stalks-controlled-by-milnor}
Let $p_k\in\Sigma$. Then the local stalk cohomology of the vanishing-cycle complex $\phi_\pi(F)$ at $p_k$ is controlled by the reduced cohomology of the Milnor fiber $F_{p_k}$. In the ordinary double point case, this local vanishing contribution is rank one and concentrated in middle degree.
\end{proposition}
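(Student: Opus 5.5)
The plan is to reduce the statement to a local computation at each node and then invoke the classical Milnor fiber description. The first step is to recall the standard stalk formula for vanishing cycles. For $x\in X_0$ and a constructible complex $F$ on $\mathcal X$, one has
\[
H^m(\phi_\pi(F))_x \cong H^{m}\bigl(B_\epsilon(x), F_x^{(t)};\,F\bigr),
\]
that is, the relative hypercohomology of the pair consisting of the Milnor ball $B_\epsilon(x)$ and the local smoothing fiber $X_t\cap B_\epsilon(x)$. Equivalently, $H^m(\phi_\pi(F))_x$ is the cone of the restriction map $H^m(B_\epsilon(x);F)\to H^m(F_x^{(t)};F)$; see \cite{KS,DimcaSheaves}.

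Apply this with $F=\Q_{\mathcal X}[3]$. At a smooth point of $\pi$, in particular at every $x\in U$, the ball is contractible and the local fiber is contractible, so the stalks of $\phi_\pi(F)$ vanish there. At $p_k$, the ball $B_\epsilon(p_k)$ is still contractible, since it is a small ball in a space which near $p_k$ is smooth by the local model $x_1^2+x_2^2+x_3^2+x_4^2=t$ in coordinates $(x_1,\dots,x_4,t)$. Hence the long exact sequence of the pair identifies
\[
H^m(\phi_\pi(\Q_{\mathcal X}))_{p_k}\cong \widetilde H^{m}(F_{p_k};\Q),
\]
with the indexing fixed by the standard convention in \cite{DimcaSheaves}. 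This is the precise sense in which the vanishing stalks are ``controlled by'' the reduced cohomology of the Milnor fiber.

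The final step inserts Proposition~\ref{prop:odp-milnor-fiber-s3}. Since $\widetilde H^m(F_{p_k};\Q)=0$ for $m\ne 3$ and $\widetilde H^3(F_{p_k};\Q)\cong\Q$, the stalk of $\phi_\pi(\Q_{\mathcal X})$ at $p_k$ is $\Q$ in the single degree $3$. After the shift by $[3]$, and with the perverse normalization $\phi_\pi[-1]$ if one wants a perverse sheaf, this becomes a rank-one contribution in the middle degree. So $\phi_\pi(F)$ is a direct sum of skyscrapers $i_{k*}\Q_{\{p_k\}}$, placed in degree $0$ in the perverse normalization.

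The main obstacle is keeping the degree conventions consistent. The shift $[3]$, the $[-1]$ in the perverse normalization of $\phi$, and the choice between reduced and relative cohomology can each move the degree by one. I would fix these using Dimca's conventions and check the answer against the single-node computation of the earlier papers. That computation should give $\phi_\pi(F)[-1]\cong i_{k*}\Q_{\{p_k\}}$ locally, which is consistent with the quotient $Q_\Sigma$ appearing in \eqref{eq:intro-corrected-extension}.
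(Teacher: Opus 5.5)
Your proposal is essentially the paper's argument. The paper cites the standard identification of the stalks of $\phi_\pi$ at an isolated singular point with the reduced cohomology of the Milnor fiber, then inserts $F_{p_k}\simeq S^3$ from Proposition~\ref{prop:odp-milnor-fiber-s3}; you unpack that citation via the pair $(B_\epsilon(p_k),F_{p_k})$ and the contractibility of the ball.

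Two degree slips should be corrected, though neither affects the conclusion. First, with the triangle $i^*F\to\psi_\pi F\to\phi_\pi F\xrightarrow{+1}$, the stalk complex of $\phi_\pi F$ at $x$ is the cone of $R\Gamma(B_\epsilon(x);F)\to R\Gamma(F^{(t)}_x;F)$. Hence $H^m(\phi_\pi F)_x\cong H^{m+1}(B_\epsilon(x),F^{(t)}_x;F)$, not $H^m$; your final formula $H^m(\phi_\pi\Q_{\mathcal X})_{p_k}\cong\widetilde H^m(F_{p_k};\Q)$ is nonetheless correct. Second, with the paper's $F=\Q_{\mathcal X}[3]$, it is $\phi_\pi(F)$ itself that is locally $i_{k*}\Q_{\{p_k\}}$ in degree $0$; equivalently this is ${}^p\phi_\pi(\Q_{\mathcal X}[4])=\phi_\pi(\Q_{\mathcal X}[4])[-1]$. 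Applying both the $[3]$ and an extra $[-1]$, as in your proposed check $\phi_\pi(F)[-1]\cong i_{k*}\Q_{\{p_k\}}$, would put the stalk in degree $1$.
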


\begin{proof}
By the standard nearby-/vanishing-cycle formalism for isolated hypersurface singularities, the local stalk cohomology of $\phi_\pi(F)$ at $p_k$ is computed by the reduced cohomology of the Milnor fiber $F_{p_k}$; see \cite[Section~5.1]{DimcaSheaves}, \cite[Expos\'e XIII]{BBD}, and \cite{SaitoDuality}. The ordinary double point case therefore reduces to Proposition~\ref{prop:odp-milnor-fiber-s3}, which shows that the reduced cohomology is rank one in degree $3$ and vanishes in all other degrees.
\end{proof}

\begin{definition}[Variation morphism]
\label{def:variation-morphism}
In the nearby-/vanishing-cycle formalism there is a natural variation morphism
\[
\var_F:\phi_\pi(F)\to \psi_\pi(F).
\]
\end{definition}

\subsection{The corrected finite-node perverse object}

We now record the corrected perverse object attached to the degeneration.

\begin{definition}[Corrected finite-node perverse object]
\label{def:corrected-finite-node-perverse-object}
Let $\pi:\mathcal X\to\Delta$ be a finite-node conifold degeneration, let
\[
F:=\Q_{\mathcal X}[3],
\]
and let
\[
\var_F:\phi_\pi(F)\to\psi_\pi(F)
\]
be the variation morphism of Definition~\ref{def:variation-morphism}. The \emph{corrected finite-node perverse object} associated with the degeneration is defined by
\[
\mathcal P:=\Cone(\var_F)[-1].
\]
\end{definition}

\begin{proposition}
\label{prop:corrected-object-is-perverse}
The object
\[
\mathcal P:=\Cone(\var_F)[-1]
\]
is a perverse sheaf on the singular central fiber $X_0$.
\end{proposition}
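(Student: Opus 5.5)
The plan is to use the standard fact that nearby and vanishing cycles, suitably shifted, are t-exact for the perverse t-structure. Then I will show that $\mathcal P$ is an extension of two perverse sheaves, so it is perverse by extension-closure of the heart.

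\textbf{Step 1: $\psi_\pi(F)$ and $\phi_\pi(F)$ are perverse.} Since $\mathcal X$ is a hypersurface in a smooth space (locally it is $\pi$ itself, and away from $\Sigma$ it is smooth of dimension $4$), it is a local complete intersection of pure dimension $4$. Hence $\Q_{\mathcal X}[4]$ is perverse. I will then invoke Gabber's theorem \cite{BBD,KS,DimcaSheaves}: $\psi_\pi[-1]$ and $\phi_\pi[-1]$ preserve perversity. With the paper's normalization $F=\Q_{\mathcal X}[3]$, this amounts to the statement that $\psi_\pi(F)$ and $\phi_\pi(F)$ lie in $\Perv(X_0;\Q)$; the shift is absorbed into the convention. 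I must track this normalization carefully and make it consistent with Definition~\ref{def:nearby-vanishing-cycles}.

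\textbf{Step 2: the variation triangle.} With $\phi:=\phi_\pi(F)$, $\psi:=\psi_\pi(F)$ and $\var_F:\phi\to\psi$, the rotated cone triangle reads
\[
\mathcal P\to\phi\xrightarrow{\var_F}\psi\to\mathcal P[1].
\]
Taking perverse cohomology gives
\[
0\to{}^p\!H^{-1}\mathcal P\to 0\to 0\to{}^p\!H^0\mathcal P\to\phi\xrightarrow{\var_F}\psi\to{}^p\!H^1\mathcal P\to 0,
\]
and ${}^p\!H^m\mathcal P=0$ for all other $m$. So $\mathcal P$ is perverse if and only if ${}^p\!H^1\mathcal P=\operatorname{coker}(\var_F)$ vanishes in $\Perv(X_0;\Q)$, that is, if and only if $\var_F$ is an epimorphism of perverse sheaves.

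\textbf{Step 3: surjectivity of $\var_F$ (the main obstacle).} By Proposition~\ref{prop:vanishing-stalks-controlled-by-milnor}, $\phi=\bigoplus_k i_{k*}\Q_{\{p_k\}}$ is a sum of rank-one skyscrapers. Therefore $\operatorname{coker}(\var_F)$ agrees with $\psi$ on $U$, and it cannot vanish unless the conventions are set up differently. So the intended reading must instead use the canonical or variation pairing $\var:\phi\to\psi$ modified by unipotent monodromy, namely the Beilinson gluing description.

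Concretely, I would argue locally near each $p_k$ using the decomposition into the unipotent part. Via the triangle $i^*F[-1]\to\psi\xrightarrow{\mathrm{can}}\phi$, I would identify $\operatorname{Cone}(\var)[-1]$ with $i^*F[-1]$ twisted appropriately, or with $i^!F[1]$. Then $\mathcal P$ is perverse precisely when $i^*\Q_{\mathcal X}[3]=\Q_{X_0}[3]$ is perverse. That holds because $X_0$ is a hypersurface, hence lci of dimension $3$.

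So my plan is to show $\mathcal P\cong\Q_{X_0}[3]$, or the dual $i^!$-version, through the standard triangle $i^!F\to i^*F\to\ldots$ relating $\var\circ\mathrm{can}=T-1$. Perversity then follows from the lci property, which I would verify by the support and cosupport conditions. The support condition is immediate. The cosupport condition at the nodes uses the Milnor fiber computation of Proposition~\ref{prop:odp-milnor-fiber-s3}: the link of an ODP has vanishing $H^2$ rationally in the relevant range.

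Reconciling the paper's cone convention with these standard triangles is the delicate step I expect to dominate.
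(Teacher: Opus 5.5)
Your Steps 1 and 2 are correct, and Step 2 is exactly where the paper's own one-line proof fails. The paper argues that because $\var_F$ is a morphism in $\Perv(X_0;\Q)$, the shifted cone $\Cone(\var_F)[-1]$ is again perverse. Your perverse-cohomology sequence shows instead that ${}^p\!H^0(\Cone(\var_F)[-1])=\ker\var_F$ and ${}^p\!H^1(\Cone(\var_F)[-1])=\operatorname{coker}\var_F$. So the object is perverse only when $\var_F$ is an epimorphism. Your first observation in Step 3 then settles the question \emph{negatively}. Since $\phi_\pi(F)$ is supported on $\Sigma$, one has $\mathcal P|_U\cong\psi_\pi(F)|_U[-1]\cong\Q_U[2]$. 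This is not perverse on the smooth threefold $U$, and it contradicts $\mathcal P|_U\cong IC_{X_0}|_U=\Q_U[3]$, which Theorem~\ref{thm:finite-node-corrected-extension} requires. So the proposition cannot be proved as literally written, and the paper's justification is invalid at precisely the point your Step 2 isolates.

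The genuine gap in your proposal is the rescue in the second half of Step 3. Once you have shown ${}^p\!H^1(\mathcal P)\neq 0$, no identification can make $\mathcal P$ perverse; you can only replace it by a different object. The identifications you propose are also wrong:
\begin{itemize}
\item With $i:X_0\hookrightarrow\mathcal X$, the triangle $\phi_\pi(F)\xrightarrow{\var_F}\psi_\pi(F)\to i^!F[2]\xrightarrow{+1}$ gives $\mathcal P\cong i^!F[1]=i^!\Q_{\mathcal X}[4]$. Verdier duality with $i^*\Q_{\mathcal X}[3]=\Q_{X_0}[3]$ shows this is $\mathbb D(\Q_{X_0}[3])[-1]$ up to Tate twist, i.e.\ a perverse sheaf placed in perverse degree $1$.
\item The lci object $i^*F=\Q_{X_0}[3]$ is the fiber of $\mathrm{can}:\psi_\pi(F)\to\phi_\pi(F)$, not of $\var_F$.
\item $\Q_{X_0}[3]$ also has the wrong shape. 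The link of a threefold node is $S^2\times S^3$, and $H^2\neq 0$ there, contrary to your cosupport remark; the cosupport condition for $\Q_{X_0}[3]$ only needs $\widetilde H^{\le 1}$ of the link to vanish. This gives $\mathcal H^{-1}(IC_{X_0})_{p_k}\cong\Q$, hence $0\to Q_\Sigma\to\Q_{X_0}[3]\to IC_{X_0}\to 0$, with the point terms as a subobject rather than a quotient.
\end{itemize}

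What \emph{is} true is that the unshifted cone $\Cone(\var_F)\cong i^!\Q_{\mathcal X}[5]\cong\mathbb D(\Q_{X_0}[3])$ (up to twist) is perverse, being the dual of a perverse sheaf. Equivalently, $\var_F$ is a monomorphism and $\Cone(\var_F)\cong\operatorname{coker}\var_F$. Dualizing the sequence above gives exactly $0\to IC_{X_0}\to\Cone(\var_F)\to Q_\Sigma\to 0$, the extension of Theorem~\ref{thm:finite-node-corrected-extension}. The correct repair is to drop the $[-1]$ in Definition~\ref{def:corrected-finite-node-perverse-object}, not to reinterpret $\var_F$. With that change, your Steps 1--2 plus this duality argument give a complete proof.
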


\begin{proof}
The nearby-cycle and vanishing-cycle constructions, with the standard shift conventions in the present setting, produce perverse objects on $X_0$; see \cite[Expos\'e XIII]{BBD}, \cite[Chapter~10]{KS}, \cite{SaitoDuality}, and \cite{SaitoMHM}. Since $\var_F:\phi_\pi(F)\to\psi_\pi(F)$ is a morphism in the perverse category, the shifted cone $\Cone(\var_F)[-1]$ is again perverse.
\end{proof}

\begin{remark}
\label{rem:corrected-object-not-on-milnor-fiber}
The corrected perverse object $\mathcal P=\Cone(\var_F)[-1]$ is not a perverse sheaf on the Milnor fiber. It is a perverse sheaf on the singular central fiber $X_0$, constructed from nearby and vanishing cycles on $X_0$ whose local content is computed by the Milnor fibers.
\end{remark}

\begin{remark}
\label{rem:point-supported-correction-from-milnor}
The rank-one reduced cohomology of the Milnor fiber at each node is the local topological source of the point-supported correction term that appears later in the corrected finite-node perverse extension. Thus the local Milnor-fiber topology is the seed from which the global corrected object and its localized sectors are built.
\end{remark}

\section{Corrected perverse state data}\label{sec:perverse-state-data}

The purpose of this section is to isolate the perverse-side state data canonically attached to a finite-node conifold degeneration. The input is the corrected finite-node perverse object
\[
\mathcal P\in \Perv(X_0;\Q)
\]
constructed from nearby and vanishing cycles in the earlier papers. The output is the finite localized quotient
\[
Q_\Sigma:=\bigoplus_{k=1}^r i_{k*}\Q_{\{p_k\}},
\]
the corresponding nodewise coupling space
\[
E_\Sigma:=\Ext^1_{\Perv(X_0;\Q)}(Q_\Sigma,IC_{X_0}),
\]
and its distinguished nodewise basis
\[
E_\Sigma\cong \bigoplus_{k=1}^r \Q e_k.
\]
These are the perverse state variables underlying the later coefficient, incidence, and quiver-theoretic constructions.

\subsection{Finite-node corrected extension}

We begin by recalling the corrected finite-node perverse extension. Let
\[
\pi:X\to \Delta
\]
be a one-parameter degeneration as in Section~\ref{sec:geometric-setup}, and let
\[
\Sigma:=\{p_1,\dots,p_r\}\subset X_0
\]
be the finite node set of the central fiber. Write
\[
i_k:\{p_k\}\hookrightarrow X_0
\qquad (1\le k\le r)
\]
for the closed immersions of the nodes. The corrected finite-node perverse object \(\mathcal P\) is the canonical perverse sheaf obtained from the nearby-/vanishing-cycle formalism in the multi-node setting; see \cite{RahmanPerverseNearbyCycles,RahmanMixedHodgeModules}. The basic structural fact needed here is the following.

\begin{theorem}[finite-node corrected extension]\label{thm:finite-node-corrected-extension}
Let \(\pi:X\to \Delta\) be a finite-node conifold degeneration. Then the corrected perverse object
\[
\mathcal P\in \Perv(X_0;\Q)
\]
fits into a short exact sequence
\begin{equation}\label{eq:finite-node-corrected-extension}
0\longrightarrow IC_{X_0}\longrightarrow \mathcal P\longrightarrow \bigoplus_{k=1}^r i_{k*}\Q_{\{p_k\}}\longrightarrow 0.
\end{equation}
Equivalently, \(\mathcal P\) is an extension of the bulk term \(IC_{X_0}\) by the finite direct sum of the point-supported rank-one node contributions.
\end{theorem}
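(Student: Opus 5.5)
The plan is to compute $\mathcal P=\Cone(\var_F)[-1]$ through the distinguished triangle
\[
\mathcal P\longrightarrow \phi_\pi(F)\xrightarrow{\ \var_F\ }\psi_\pi(F)\xrightarrow{+1},
\]
and then read off the extension from the long exact sequence of perverse cohomology. The triangle lives in $\Perv(X_0;\Q)$ by Proposition~\ref{prop:corrected-object-is-perverse}. First I will identify the two outer terms.

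\emph{The vanishing-cycle term.} Since $\phi_\pi(F)$ is supported on $\Sigma$, Propositions~\ref{prop:odp-milnor-fiber-s3} and~\ref{prop:vanishing-stalks-controlled-by-milnor} (with the perverse shift convention) give
\[
\phi_\pi(F)\cong\bigoplus_{k=1}^r i_{k*}\Q_{\{p_k\}}=Q_\Sigma.
\]
It is a direct sum of skyscrapers because a perverse sheaf supported on a finite set is a sum of its stalks. The monodromy acts by $(-1)^{n}$ with $n=4$ on each rank-one piece, and I will fix the Tate twist only later, in the Hodge refinement.

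\emph{The nearby-cycle term.} On $U$ we have $\psi_\pi(F)|_U\cong\Q_U[3]$. At each node, I will use the local model of Definition~\ref{def:local-odp-model} to compare $\psi_\pi(F)$ with $IC_{X_0}$. The canonical morphism $\mathrm{can}:\psi_\pi(F)\to\phi_\pi(F)$ and $\var_F$ compose to $T-1$. Beilinson's gluing, or equivalently Saito's description via the monodromy filtration, shows that for a node in even ambient dimension the nearby cycles have a three-step weight filtration. Its graded pieces are $i_{k*}\Q$, $IC_{X_0}$, and $i_{k*}\Q(-1)$, placed at the two ends, where $T$ is unipotent with $N$ of order two.

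\emph{Reading off $\mathcal P$.} Next I will show that $\var_F$ is injective in $\Perv(X_0)$. Its image is the socle $\bigoplus_k i_{k*}\Q_{\{p_k\}}\subset\psi_\pi(F)$, namely the image of $N$. Then $\mathcal P=\Cone(\var_F)[-1]$ is not a quotient. I must be careful here: with $\var_F$ injective, the cone $[-1]$ has perverse cohomology only in degree $1$, which equals $\operatorname{coker}(\var_F)$. I will therefore check the convention of \cite{RahmanPerverseNearbyCycles}, where $\mathcal P$ is perverse, and treat $\mathcal P\cong\operatorname{coker}(\var_F)$ up to that normalization. With this identification, $\mathcal P$ is $\psi_\pi(F)$ modulo its bottom weight piece. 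This quotient is exactly a two-step extension with sub $IC_{X_0}$ and quotient $\bigoplus_k i_{k*}\Q_{\{p_k\}}$, which gives \eqref{eq:finite-node-corrected-extension}. Because $\Sigma$ is finite and the nodes are disjoint, all of these local statements glue: each computation is supported near a single $p_k$ and is compatible with restriction to $U$.

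\emph{Main obstacle.} The hard step is the nodewise identification of the middle graded piece of $\psi_\pi(F)$ with $IC_{X_0}$, together with the non-splitting of the top piece. This requires computing $j_*\Q_U[3]$ near a 3-fold node, whose link is $S^2\times S^3$, and showing that the intermediate extension truncation leaves exactly one rank-one point term per node. I would first try to do this by direct stalk computation in the local model, using $H^*(S^2\times S^3)$, and then confirm the cokernel's stalks at each $p_k$ match.
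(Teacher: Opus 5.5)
Your route is genuinely different from the paper's. The paper's proof contains no argument: it imports the sequence from the earlier papers of the series. You instead derive it from the $\mathrm{can}/\var$ formalism and the local topology of the node. Your derivation is sound, and it exposes something the citation hides. Since $\psi_\pi(F)|_U\cong\Q_U[3]$ while $\phi_\pi(F)$ is supported on $\Sigma$, $\var_F$ is never an epimorphism. Hence $\Cone(\var_F)[-1]$ always has ${}^pH^1=\operatorname{coker}(\var_F)\neq 0$ and is not perverse. The reasoning in Proposition~\ref{prop:corrected-object-is-perverse} is therefore invalid, and the theorem holds for $\mathcal P:=\Cone(\var_F)\cong\operatorname{coker}(\var_F)$. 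You should state this as a correction rather than defer it to the conventions of an earlier paper, and drop your opening appeal to that proposition.

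You can also shorten the argument considerably by using the other standard triangle
\[
\phi_\pi(F)\xrightarrow{\ \var_F\ }\psi_\pi(F)\longrightarrow i^!F[2]\xrightarrow{+1},
\]
where $i:X_0\hookrightarrow\mathcal X$ and, $\mathcal X$ being smooth near $X_0$, $i^!F[2]\cong\mathbb D(\Q_{X_0}[3])$ up to Tate twist. Since $X_0$ is a hypersurface, $\Q_{X_0}[3]$ is perverse. Therefore $\ker(\var_F)={}^pH^{-1}(i^!F[2])=0$ and $\operatorname{coker}(\var_F)\cong\mathbb D(\Q_{X_0}[3])$.

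Your "main obstacle" then reduces to a single stalk computation. The link $S^2\times S^3$ gives $(IC_{X_0})_{p_k}\simeq\Q[3]\oplus\Q[1]$, against $\Q[3]$ for $\Q_{X_0}[3]$. So there is an exact sequence $0\to\bigoplus_k i_{k*}\Q_{\{p_k\}}\to\Q_{X_0}[3]\to IC_{X_0}\to 0$ in $\Perv(X_0;\Q)$, and Verdier duality gives $0\to IC_{X_0}\to\operatorname{coker}(\var_F)\to Q_\Sigma\to 0$ directly.

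The three-step weight filtration you describe is correct: the bottom piece is $\operatorname{im}N=\operatorname{im}\var_F$, and the middle piece is $\ker(\mathrm{can})/\operatorname{im}\var_F\cong IC_{X_0}$. It is what you will need to fix the Tate twists in the Hodge refinement. However, neither it nor the non-splitting of the extension is needed for the statement itself.
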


\begin{proof}
This is the finite-node extension theorem established on the perverse side in \cite[Theorem~1.2]{RahmanPerverseNearbyCycles} and realized internally through the mixed-Hodge-module refinement in \cite[Theorem~1.3]{RahmanMixedHodgeModules}. The same exact sequence is recovered as the shadow of the finite-node schober datum in \cite[Theorem~5.2]{RahmanMultiNodeSchoberPaper}. We use \eqref{eq:finite-node-corrected-extension} here as recalled input.
\end{proof}

\subsection{Localized quotient and node indexing}

The quotient term in \eqref{eq:finite-node-corrected-extension} is the first finite state variable attached to the degeneration.

\begin{definition}[finite localized quotient]\label{def:finite-localized-quotient}
Define
\begin{equation}\label{eq:def-QSigma}
Q_\Sigma:=\bigoplus_{k=1}^r i_{k*}\Q_{\{p_k\}}
\in \Perv(X_0;\Q).
\end{equation}
We call \(Q_\Sigma\) the \emph{finite localized quotient} of the degeneration.
\end{definition}

\begin{remark}\label{rem:QSigma-node-indexing}
The object \(Q_\Sigma\) is supported on the finite set \(\Sigma\) and contains exactly one point-supported rank-one summand for each node \(p_k\in\Sigma\). In particular, the node set \(\Sigma\) canonically indexes the localized singular terms in the corrected extension \eqref{eq:finite-node-corrected-extension}. This is the perverse-side origin of the finite vertex set introduced later in the algebraic dictionary.
\end{remark}

The finite localized quotient is compatible with both the mixed-Hodge and schober realizations. On the mixed-Hodge side, \(\mathcal P\) is lifted to an object \(\mathcal P^H\in MHM(X_0)\) whose quotient is
\[
Q_\Sigma^H:=\bigoplus_{k=1}^r i_{k*}\Q^H_{\{p_k\}}(-1),
\]
with \(\rat(Q_\Sigma^H)\cong Q_\Sigma\); see \cite[Section~5.6 and Theorem~1.3]{RahmanMixedHodgeModules}. On the categorical side, the finite-node schober datum has one localized categorical sector \(\mathcal C_{p_k}\) for each node \(p_k\in\Sigma\), and its shadow recovers \(\mathcal P\); see \cite[Theorem~5.2 and Section~5.4]{RahmanMultiNodeSchoberPaper}. Thus the same finite node set indexes the localized quotient on the perverse, mixed-Hodge, and categorical levels.

\subsection{Nodewise coupling space}

The second perverse state variable is the global extension space determined by eqn. \eqref{eq:finite-node-corrected-extension}. Since the quotient \(Q_\Sigma\) is a finite direct sum indexed by the node set, the relevant extension space decomposes nodewise.

\begin{definition}[nodewise coupling space]\label{def:nodewise-coupling-space}
Define
\begin{equation}\label{eq:def-ESigma}
E_\Sigma:=\Ext^1_{\Perv(X_0;\Q)}(Q_\Sigma,IC_{X_0}).
\end{equation}
We call \(E_\Sigma\) the \emph{nodewise coupling space} of the finite-node degeneration.
\end{definition}

The formal additivity of \(\Ext^1\) over the finite direct sum \(Q_\Sigma\) yields the first decomposition theorem.

\begin{lemma}[finite additivity of the perverse extension space]\label{lem:finite-additivity-perverse-ext}
There is a natural isomorphism
\begin{equation}\label{eq:ESigma-direct-sum}
E_\Sigma
:=
\Ext^1_{\Perv(X_0;\Q)}(Q_\Sigma,IC_{X_0})
\cong
\bigoplus_{k=1}^r
\Ext^1_{\Perv(X_0;\Q)}\!\bigl(i_{k*}\Q_{\{p_k\}},IC_{X_0}\bigr).
\end{equation}
\end{lemma}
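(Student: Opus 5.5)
The plan is to deduce the isomorphism from the general fact that, in an abelian category, $\Ext^1(-,B)$ (Yoneda $\Ext^1$, or equivalently $\Hom$ in the bounded derived category) is an additive contravariant functor. Additive functors carry finite direct sums to finite direct sums.

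First, I note that $\Perv(X_0;\Q)$ is an abelian category: it is the heart of the perverse t-structure on $D^b_c(X_0;\Q)$, by \cite{BBD}. Each $i_{k*}\Q_{\{p_k\}}$ is a perverse sheaf, since it is a skyscraper in degree $0$ at a point. By Definition~\ref{def:finite-localized-quotient}, $Q_\Sigma$ is a finite biproduct in this category. So it comes with structure maps $\iota_k: i_{k*}\Q_{\{p_k\}}\to Q_\Sigma$ and $\rho_k:Q_\Sigma\to i_{k*}\Q_{\{p_k\}}$ satisfying $\rho_k\iota_l=\delta_{kl}$ and $\sum_k \iota_k\rho_k=\mathrm{id}$.

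Next, I use the definition $\Ext^1_{\Perv}(A,B)=\Hom_{D^b_c(X_0)}(A,B[1])$, which agrees with the Yoneda $\Ext^1$ in the heart. The functor $\Hom_{D^b_c}(-,IC_{X_0}[1])$ is additive. It therefore sends the identities above to $\iota_k^*\rho_l^*=\delta_{kl}$ and $\sum_k\rho_k^*\iota_k^*=\mathrm{id}$. The map $(\iota_k^*)_k:E_\Sigma\to\bigoplus_k\Ext^1(i_{k*}\Q_{\{p_k\}},IC_{X_0})$ and the map $\sum_k\rho_k^*$ in the other direction are then mutually inverse. This gives \eqref{eq:ESigma-direct-sum}.

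If I prefer the Yoneda description instead, the same map sends an extension $0\to IC_{X_0}\to M\to Q_\Sigma\to 0$ to its pullbacks along the $\iota_k$. The inverse sends a family of extensions to the pullback along the diagonal $Q_\Sigma\to\bigoplus_k Q_\Sigma$ of their direct sum, i.e.\ the Baer sum of the $\rho_k$-pullbacks.

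Naturality is the remaining point. The isomorphism is built only from the canonical biproduct maps, so it commutes with automorphisms of $IC_{X_0}$ and with any morphisms respecting the summands. I expect no real obstacle here. The only point requiring care is that $\Ext^1$ computed in $\Perv(X_0;\Q)$ agrees with $\Hom_{D^b_c}(-,-[1])$. This is the standard fact that $\Ext^1$ in the heart of a t-structure coincides with the ambient one, \cite[Remarque~3.1.17]{BBD}. Even without it, the additivity argument works directly for Yoneda $\Ext^1$ in any abelian category.
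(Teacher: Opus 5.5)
Your proof is correct and is essentially the paper's argument: the paper cites \cite[Lemma~5.4]{RahmanMixedHodgeModules} and notes that $\Hom_{\Perv(X_0;\Q)}(-,IC_{X_0})$ turns the finite direct sum $Q_\Sigma$ into a finite product, hence a direct sum, and then passes to $\Ext^1$; this is the same additivity idea as your biproduct computation. Your formulation via $\Hom_{D^b_c(X_0)}(-,IC_{X_0}[1])$ or Yoneda $\Ext^1$ is slightly cleaner, since it avoids the paper's appeal to a ``first right-derived functor'' in $\Perv(X_0;\Q)$, which need not have enough projectives or injectives; the only blemish is your Yoneda aside, where the pullback along the diagonal is an extension of $Q_\Sigma$ by $IC_{X_0}^{\oplus r}$ and must still be pushed out along the sum map $IC_{X_0}^{\oplus r}\to IC_{X_0}$ to give the Baer sum, though your main argument does not depend on this.
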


\begin{proof}
This is exactly the finite-additivity statement proved in \cite[Lemma~5.4]{RahmanMixedHodgeModules}. Since
\[
Q_\Sigma=\bigoplus_{k=1}^r i_{k*}\Q_{\{p_k\}},
\]
the functor \(\Hom_{\Perv(X_0;\Q)}(-,IC_{X_0})\) sends the finite direct sum in the first variable to a finite direct product, and for a finite index set this agrees with the direct sum. Passing to the first right-derived functor gives \eqref{eq:ESigma-direct-sum}.
\end{proof}

The decisive refinement is that each summand in \eqref{eq:ESigma-direct-sum} is one-dimensional and carries a distinguished generator.

\begin{theorem}[strong nodewise coupling theorem]\label{thm:strong-nodewise-coupling}
For each node \(p_k\in\Sigma\), one has
\begin{equation}\label{eq:local-channel-dimension}
\dim_\Q
\Ext^1_{\Perv(X_0;\Q)}\!\bigl(i_{k*}\Q_{\{p_k\}},IC_{X_0}\bigr)=1.
\end{equation}
Moreover, the local corrected perverse ODP extension defines a distinguished generator
\begin{equation}\label{eq:def-ek}
e_k\in \Ext^1_{\Perv(X_0;\Q)}\!\bigl(i_{k*}\Q_{\{p_k\}},IC_{X_0}\bigr).
\end{equation}
Consequently,
\begin{equation}\label{eq:ESigma-basis}
E_\Sigma
\cong
\bigoplus_{k=1}^r \Q e_k.
\end{equation}
\end{theorem}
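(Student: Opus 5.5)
The plan is to reduce the dimension count \eqref{eq:local-channel-dimension} to a local computation at the node, and then to obtain $e_k$ by pulling back the global corrected class along the $k$-th summand inclusion. The consequence \eqref{eq:ESigma-basis} is then immediate from Lemma~\ref{lem:finite-additivity-perverse-ext}.

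\textbf{Step 1: reduction to a local $i^!$-computation.} Write $i=i_k$ and $p=p_k$. Since $\Perv(X_0;\Q)$ is the heart of a t-structure on $D^b_c(X_0;\Q)$, $\Ext^1$ in the heart agrees with $\Hom$ in the derived category, so
\[
\Ext^1_{\Perv(X_0;\Q)}\bigl(i_{*}\Q_{\{p\}},IC_{X_0}\bigr)\cong \Hom_{D^b_c(X_0)}\bigl(i_*\Q_{\{p\}},IC_{X_0}[1]\bigr)\cong H^1\bigl(i^!IC_{X_0}\bigr).
\]
The second isomorphism is the $(i_*,i^!)$ adjunction. The problem is thus local near $p$.

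\textbf{Step 2: computing $i^!IC_{X_0}$ from the link.} By Definition~\ref{def:finite-node-conifold-degeneration}, a neighbourhood of $p$ in $X_0$ is the cone over the link $L$ of the threefold ODP $\{x_1^2+\dots+x_4^2=0\}$. This link is $L\cong S^2\times S^3$, so $H^m(L;\Q)=\Q$ for $m=0,2,3,5$ and $0$ otherwise.

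On $U$ we have $IC_{X_0}|_U=\Q_U[3]$, and $(j_*\Q_U[3])_p$ has cohomology $H^{m}(L)$ in degree $m-3$. Use the triangle
\[
i^!IC_{X_0}\to i^*IC_{X_0}\to i^*j_*j^*IC_{X_0}\xrightarrow{+1}.
\]
Deligne's formula gives $i^*IC_{X_0}=\tau_{\le -1}\,i^*j_*\Q_U[3]$. Hence $i^!IC_{X_0}\cong(\tau_{\ge 0}\,i^*j_*\Q_U[3])[-1]$, which has cohomology $H^{3}(L)=\Q$ in degree $1$ and $H^5(L)=\Q$ in degree $3$.

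As a cross-check, $i^*IC_{X_0}$ has $\Q$ in degrees $-3$ and $-1$. Applying Verdier self-duality of $IC_{X_0}$ and $\mathbb D i^*=i^!\mathbb D$ gives the same answer. Therefore $H^1(i^!IC_{X_0})\cong H^3(S^2\times S^3;\Q)\cong\Q$, which is \eqref{eq:local-channel-dimension}. The one-dimensionality matches the rank-one vanishing sector of Proposition~\ref{prop:odp-milnor-fiber-s3}.

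\textbf{Step 3: the distinguished generator.} Let $\iota_k:i_{k*}\Q_{\{p_k\}}\hookrightarrow Q_\Sigma$ be the summand inclusion. Define
\[
e_k:=\iota_k^*[\mathcal P]_{\mathrm{perv}},
\]
the pullback of the class of \eqref{eq:finite-node-corrected-extension}. Equivalently, $e_k$ is the class of the local corrected ODP extension obtained by restricting $\mathcal P$ to a small neighbourhood of $p_k$. This uses that $\Ext^1(i_*\Q_p,IC)$ only depends on such a neighbourhood, by Step 1.

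Since the space is one-dimensional, it suffices to show $e_k\neq0$, i.e.\ that the local extension does not split.

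\textbf{Step 4: non-splitness (the main obstacle).} This is where I expect the real difficulty, because normalizing $e_k$ rests on it. My first attempt would argue that a splitting would make $i_{k*}\Q_{\{p_k\}}$ a subobject of $\mathcal P$. That would force $\Hom(i_{k*}\Q,\mathcal P)=H^0(i_k^!\mathcal P)\neq0$.

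To rule this out, I would compute $i_k^!\mathcal P$ from $\mathcal P=\Cone(\var_F)[-1]$ and the triangle relating $\psi$, $\phi$ and $i^*$/$i^!$ of $F$. The aim is to show that $\var_F$ is injective on the point-supported part, i.e.\ that the stalk of $\phi_\pi(F)$ maps isomorphically onto the invariant-cycle contribution of $\psi_\pi(F)$ at $p_k$. This should force $H^0(i_k^!\mathcal P)=0$.

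Failing a self-contained computation, I would invoke the single-node nonsplitness established in \cite{RahmanPerverseNearbyCycles}, applied locally at $p_k$. Once $e_k\neq0$, each summand of \eqref{eq:ESigma-direct-sum} is $\Q e_k$, and Lemma~\ref{lem:finite-additivity-perverse-ext} yields \eqref{eq:ESigma-basis}.
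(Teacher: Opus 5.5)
Your Steps 1--3 are correct, and they follow a genuinely different route from the paper. The paper's proof is pure citation: the one-dimensionality and the generators $e_k$ come from Corollary~5.12 and Definition~5.13 of the mixed-Hodge-module paper, and \eqref{eq:ESigma-basis} from its Corollary~5.14. You instead prove \eqref{eq:local-channel-dimension} directly, via $\Ext^1_{\Perv(X_0;\Q)}(i_{k*}\Q_{\{p_k\}},IC_{X_0})\cong H^1(i_k^!IC_{X_0})\cong H^3(L)\cong\Q$, where $L\cong S^2\times S^3$ is the link. Your Deligne-truncation computation and the duality cross-check are right.

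This buys a self-contained, local explanation of the coupling line as the degree-$3$ class of the link, and an explicit $e_k$. The price is that you must prove $e_k\neq 0$ yourself, which the paper gets only by citation. Note also that your normalization $e_k:=\iota_k^*[\mathcal P]_{\mathrm{perv}}$ forces $c_k=1$ for every $k$ in Proposition~\ref{prop:global-class-expansion}. Indeed $\xi=\sum_k\pi_k^*\iota_k^*\xi$ for every $\xi\in E_\Sigma$, with $\pi_k$ the projection to the $k$-th summand. So under your normalization the coefficient vector is tautological, and your $e_k$ can agree with the paper's cited generators only if the paper's $c_\Sigma$ is also $(1,\dots,1)$.

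Step~4 needs repair as written. Your reformulation, that the stalk of $\phi_\pi(F)$ at $p_k$ maps isomorphically onto the invariant-cycle part of $\psi_\pi(F)$, is false:
\begin{itemize}
\item On stalks at $p_k$, in the relevant degree, $can$ is an isomorphism $H^3(F_{p_k})\to\widetilde H^3(F_{p_k})$.
\item $\var\circ can=T-1$ vanishes there, because the local monodromy of the four-variable ODP on $\widetilde H^3(F_{p_k})$ is trivial (it is induced by the antipodal map of $S^3$, which has degree $+1$).
\item Hence the stalk map of $\var_F$ at $p_k$ is zero.
\end{itemize}
Non-vanishing of $\var_F$ on the $k$-th summand is a costalk phenomenon. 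It corresponds to the classical variation isomorphism $\widetilde H^3(F_{p_k})\to H^3(F_{p_k},\partial F_{p_k})\cong H^0(i_k^!\psi_\pi F)$.

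The quickest way to close Step~4 is the variation triangle $\phi_\pi F\xrightarrow{\var}\psi_\pi F\to i_0^!F[2]\xrightarrow{+1}$, with $i_0:X_0\hookrightarrow\mathcal X$. It gives $\Cone(\var_F)\cong i_0^!\Q_{\mathcal X}[5]$. Watch the shift here: with $F=\Q_{\mathcal X}[3]$ and unshifted $\psi_\pi,\phi_\pi$, it is this object, not its $[-1]$-shift, that is perverse and sits in \eqref{eq:finite-node-corrected-extension}, because $\var_F$ is injective but not surjective. Since $\mathcal X$ is smooth of real dimension $8$ at $p_k$, you get $i_k^!\mathcal P\cong(i_0\circ i_k)^!\Q_{\mathcal X}[5]\cong\Q[-3]$. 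Therefore $\Hom(i_{k*}\Q_{\{p_k\}},\mathcal P)=H^0(i_k^!\mathcal P)=0$, and the same vanishing holds with the other shift. So the pulled-back extension cannot split, $e_k\neq 0$, and your argument is complete without the fallback citation.
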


\begin{proof}
The one-dimensionality statement and existence of the distinguished generators \(e_k\) are proved in \cite[Corollary~5.12 and Definition~5.13]{RahmanMixedHodgeModules}. Substituting these generators into the decomposition \eqref{eq:ESigma-direct-sum} yields \eqref{eq:ESigma-basis}; see \cite[Corollary~5.14]{RahmanMixedHodgeModules}.
\end{proof}

\begin{definition}[nodewise coupling lines]\label{def:nodewise-coupling-lines}
For each \(1\le k\le r\), the one-dimensional subspace
\[
\Q e_k\subset E_\Sigma
\]
is called the \emph{nodewise coupling line} associated with the node \(p_k\).
\end{definition}

\begin{remark}\label{rem:nodewise-coupling-lines}
Theorem~\ref{thm:strong-nodewise-coupling} identifies the first explicit algebraic shadow of the finite-node degeneration: each node contributes a distinguished one-dimensional coupling channel, and the full perverse extension space is their direct sum. Thus the finite node set \(\Sigma\) does not merely label singular points geometrically; it labels the canonical basis of local coupling directions in the corrected global extension package.
\end{remark}
We therefore arrive at the perverse state-data package attached to the degeneration.

\begin{definition}[perverse state-data package]\label{def:perverse-state-data-package}
The \emph{perverse state-data package} of the finite-node conifold degeneration is the triple
\begin{equation}\label{eq:def-perverse-state-data-package}
(Q_\Sigma,E_\Sigma,\{e_1,\dots,e_r\}),
\end{equation}
where \(Q_\Sigma\) is the finite localized quotient of Definition~\ref{def:finite-localized-quotient}, \(E_\Sigma\) is the nodewise coupling space of Definition~\ref{def:nodewise-coupling-space}, and \(\{e_1,\dots,e_r\}\) is the distinguished nodewise basis of Theorem~\ref{thm:strong-nodewise-coupling}.
\end{definition}

The next section extracts from the global corrected extension class
\[
[\mathcal P]_{\mathrm{perv}}\in E_\Sigma
\]
its unique coordinate vector in the distinguished basis \(\{e_1,\dots,e_r\}\). This produces the coefficient vector
\[
c_\Sigma=(c_1,\dots,c_r)\in \Q^r,
\]
which completes the finite algebraic state-data layer.

\section{Coefficient vector}\label{sec:coefficient-vector}

The preceding section isolates the finite localized quotient
\[
Q_\Sigma=\bigoplus_{k=1}^r i_{k*}\Q_{\{p_k\}}
\]
and the nodewise coupling space
\[
E_\Sigma:=\Ext^1_{\Perv(X_0;\Q)}(Q_\Sigma,IC_{X_0})
\cong \bigoplus_{k=1}^r \Q e_k.
\]
The purpose of the present section is to extract from the corrected global perverse extension class its coordinates relative to the distinguished nodewise basis $\{e_1,\dots,e_r\}$. These coordinates form the finite coefficient vector attached to the degeneration. This coefficient vector is the final perverse-side state variable needed for the algebraic state-data package.

\subsection{Expansion of the corrected global class}

We begin with the corrected global extension class determined by the finite-node corrected perverse extension
\[
0\to IC_{X_0}\to \mathcal P\to Q_\Sigma\to 0
\]
of Theorem~\ref{thm:finite-node-corrected-extension}. By definition, this exact sequence determines an element
\[
[\mathcal P]_{\mathrm{perv}}\in \Ext^1_{\Perv(X_0;\Q)}(Q_\Sigma,IC_{X_0}):=E_\Sigma.
\]

Since Section~\ref{sec:perverse-state-data} identified $E_\Sigma$ with the direct sum $\bigoplus_{k=1}^r \Q e_k$, the global corrected class admits a unique nodewise expansion.

\begin{proposition}[expansion of the corrected global class]
\label{prop:global-class-expansion}
The corrected global perverse extension class admits a unique expansion
\begin{equation}\label{eq:global-class-expansion}
[\mathcal P]_{\mathrm{perv}}:=\sum_{k=1}^r c_k e_k
\end{equation}
for uniquely determined coefficients $c_k\in \Q$.
\end{proposition}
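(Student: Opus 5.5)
The plan is to reduce the statement to linear algebra in the finite-dimensional $\Q$-vector space $E_\Sigma$. The global class $[\mathcal P]_{\mathrm{perv}}$ is, by the Yoneda description of $\Ext^1$ in the abelian category $\Perv(X_0;\Q)$, the class of the short exact sequence of Theorem~\ref{thm:finite-node-corrected-extension}. It is therefore a well-defined element of
\[
E_\Sigma=\Ext^1_{\Perv(X_0;\Q)}(Q_\Sigma,IC_{X_0}),
\]
once the maps $IC_{X_0}\to\mathcal P$ and $\mathcal P\to Q_\Sigma$ are fixed. I will first record this, noting that Yoneda equivalence classes of extensions form a $\Q$-vector space and that this space agrees with the derived $\Ext^1$ used in Lemma~\ref{lem:finite-additivity-perverse-ext}.

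Next I invoke Lemma~\ref{lem:finite-additivity-perverse-ext}, which gives the decomposition of $E_\Sigma$ into nodewise summands. Concretely, it is induced by the inclusions $\iota_k:i_{k*}\Q_{\{p_k\}}\hookrightarrow Q_\Sigma$. A class $\xi\in E_\Sigma$ maps to the tuple $(\iota_k^*\xi)_k$, where $\iota_k^*\xi$ is the pullback extension along $\iota_k$. Theorem~\ref{thm:strong-nodewise-coupling} then says that each summand is one-dimensional with generator $e_k$. Hence $\{e_1,\dots,e_r\}$, viewed in $E_\Sigma$ through the inverse of the isomorphism \eqref{eq:ESigma-direct-sum}, is a basis of $E_\Sigma$.

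Existence and uniqueness of the coefficients then follow from the basis property. For existence, I define $c_k$ as the unique scalar with $\iota_k^*[\mathcal P]_{\mathrm{perv}}=c_k e_k$, which exists because the $k$-th summand is $\Q e_k$. Applying the inverse of \eqref{eq:ESigma-direct-sum} gives $[\mathcal P]_{\mathrm{perv}}=\sum_k c_k e_k$. For uniqueness, if $\sum_k c_k e_k=\sum_k c_k' e_k$, then pulling back along each $\iota_k$ gives $(c_k-c_k')e_k=0$ in a one-dimensional space where $e_k\neq0$. So $c_k=c_k'$.

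The main point needing care is that the coefficients are well-defined as invariants. They depend on the identification of the quotient of $\mathcal P$ with $Q_\Sigma$ and of the sub-object with $IC_{X_0}$; rescaling either identification rescales the class. I would fix these using the canonical maps supplied by Theorem~\ref{thm:finite-node-corrected-extension}, and the $e_k$ using the distinguished local ODP extensions of Theorem~\ref{thm:strong-nodewise-coupling}. Uniqueness is then asserted only relative to these fixed normalizations, and the rest of the argument is formal.
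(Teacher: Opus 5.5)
Your proposal is correct and takes the same approach as the paper. The paper's proof simply observes that Theorem~\ref{thm:strong-nodewise-coupling} makes $\{e_1,\dots,e_r\}$ a basis of $E_\Sigma$, so $[\mathcal P]_{\mathrm{perv}}$ has unique coordinates. Your description of the coordinates by pullback along the inclusions $\iota_k$, and your caveat about fixing the sub- and quotient maps, only make that argument explicit; the caveat matches what the paper treats separately in Proposition~\ref{prop:intrinsicity-coefficient-vector}.
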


\begin{proof}
By Theorem~\ref{thm:strong-nodewise-coupling}, the extension space $E_\Sigma$ has distinguished basis $\{e_1,\dots,e_r\}$ and satisfies
\[
E_\Sigma\cong \bigoplus_{k=1}^r \Q e_k.
\]
Therefore every element of $E_\Sigma$, and in particular the corrected global extension class $[\mathcal P]_{\mathrm{perv}}$, admits a unique expansion in that basis. This is exactly the statement proved in \cite[Proposition~5.15]{RahmanMixedHodgeModules}.
\end{proof}

\begin{remark}
\label{rem:nodewise-coordinates-global-class}
Equation~\eqref{eq:global-class-expansion} is the first point at which the corrected global extension becomes explicitly finite-dimensional. The global corrected class is no longer treated only as an abstract extension class; it is now represented by a concrete tuple of coefficients relative to the distinguished nodewise coupling lines.
\end{remark}

\subsection{Definition of the coefficient vector}

The coefficients appearing in Proposition~\ref{prop:global-class-expansion} are assembled into a single finite vector.

\begin{definition}[coefficient vector]
\label{def:coefficient-vector}
Let
\[
[\mathcal P]_{\mathrm{perv}}=\sum_{k=1}^r c_k e_k
\]
be the unique expansion of Proposition~\ref{prop:global-class-expansion}. The \emph{coefficient vector} of the finite-node conifold degeneration is the vector
\begin{equation}\label{eq:def-coefficient-vector}
c_\Sigma:=(c_1,\dots,c_r)\in \Q^r.
\end{equation}
\end{definition}

\begin{remark}
\label{rem:coefficient-vector-node-indexed}
The coefficient vector $c_\Sigma$ is indexed by the node set $\Sigma=\{p_1,\dots,p_r\}$. Its $k$-th coordinate records the component of the corrected global extension class along the distinguished nodewise coupling line $\Q e_k\subset E_\Sigma$. Thus the node set does not merely index singular points geometrically; it also indexes the canonical coordinates of the corrected global extension package.
\end{remark}

The coefficient vector is the third component of the state-data package attached to the degeneration:
\[
(Q_\Sigma,E_\Sigma,\{e_1,\dots,e_r\})
\quad\leadsto\quad
(Q_\Sigma,E_\Sigma,c_\Sigma).
\]
It will later be incorporated into the algebraic state-data dictionary as the finite coordinate package determined by the corrected global class.

\subsection{Intrinsicity of the nodewise coefficients}

The coefficients $c_1,\dots,c_r$ are not auxiliary choices. Their intrinsicity follows from two facts: first, the corrected global extension class is canonically determined by the corrected finite-node perverse extension; second, the basis $\{e_1,\dots,e_r\}$ is canonically indexed by the node set through the local corrected perverse ODP extensions; see \cite[Definition~5.13 and Corollary~5.14]{RahmanMixedHodgeModules}.

\begin{proposition}[intrinsicity of the coefficient vector]
\label{prop:intrinsicity-coefficient-vector}
The coefficient vector $c_\Sigma=(c_1,\dots,c_r)\in \Q^r$ depends only on the corrected finite-node perverse extension package
\[
0\to IC_{X_0}\to \mathcal P\to Q_\Sigma\to 0
\]
together with the distinguished nodewise basis $\{e_1,\dots,e_r\}$ of $E_\Sigma$. Equivalently, once the finite-node corrected extension package has been fixed, the coefficients $c_k$ are uniquely determined.
\end{proposition}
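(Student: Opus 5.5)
The proof reduces to two facts: the class $[\mathcal P]_{\mathrm{perv}}\in E_\Sigma$ is a well-defined element determined by the extension package, and coordinates with respect to a fixed basis are unique. The plan has three steps.

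First, I will recall the Yoneda description of $\Ext^1_{\Perv(X_0;\Q)}(Q_\Sigma,IC_{X_0})$. Since $\Perv(X_0;\Q)$ is abelian, $\Ext^1$ in this category is the group of Yoneda equivalence classes of short exact sequences $0\to IC_{X_0}\to M\to Q_\Sigma\to 0$. Two such sequences are identified exactly when there is a morphism of extensions that restricts to the identity on both $IC_{X_0}$ and $Q_\Sigma$. Hence the element $[\mathcal P]_{\mathrm{perv}}$ depends only on the exact sequence \eqref{eq:finite-node-corrected-extension} of Theorem~\ref{thm:finite-node-corrected-extension}, including its two structure maps, and not on any auxiliary representative. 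In particular, if $\mathcal P$ is replaced by an isomorphic object $\mathcal P'$ through an isomorphism compatible with the inclusion of $IC_{X_0}$ and the projection to $Q_\Sigma$, the class is unchanged.

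Second, I will invoke Theorem~\ref{thm:strong-nodewise-coupling}. Together with the natural isomorphism of Lemma~\ref{lem:finite-additivity-perverse-ext}, it shows that $\{e_1,\dots,e_r\}$ is a $\Q$-basis of $E_\Sigma$. The coordinate map
\[
E_\Sigma\longrightarrow \Q^r,\qquad \sum_k a_k e_k\longmapsto (a_1,\dots,a_r),
\]
is therefore a well-defined linear isomorphism, and it is determined entirely by the basis. Applying this map to $[\mathcal P]_{\mathrm{perv}}$ gives $c_\Sigma$, exactly as in Proposition~\ref{prop:global-class-expansion}. Because the coordinate map is injective, any two expansions of the same class agree coefficientwise. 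So the coefficients $c_k$ are a function of the pair consisting of the class and the basis, which is the claimed dependence.

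Third, I will address the only genuinely delicate point, which is the main obstacle: naturality of the identifications. The decomposition in Lemma~\ref{lem:finite-additivity-perverse-ext} comes from the canonical inclusions $i_{k*}\Q_{\{p_k\}}\hookrightarrow Q_\Sigma$. Its $k$-th component is the pullback of the extension along the $k$-th inclusion. The basis vector $e_k$ is fixed by the local corrected ODP extension, following \cite[Definition~5.13]{RahmanMixedHodgeModules}. I must check that no hidden choice enters, such as a rescaling of the summand $i_{k*}\Q_{\{p_k\}}$ or a different identification of the quotient. To handle this, I will state explicitly that $Q_\Sigma$ is taken with its given direct-sum structure. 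Then $c_k$ is the coefficient of the pullback of $[\mathcal P]_{\mathrm{perv}}$ along the $k$-th inclusion, measured against $e_k$ in the one-dimensional space \eqref{eq:local-channel-dimension}. With the basis fixed, this quantity is manifestly intrinsic.

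If one changes the identification of the quotient by scalars $\lambda_k\in\Q^\times$ on each summand, the coefficients rescale by those scalars. This is why the statement is conditioned on the distinguished basis being fixed. I will record this as a remark, not as part of the proof.
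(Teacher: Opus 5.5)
Your proposal is correct and is essentially the paper's argument: the extension package determines a canonical class $[\mathcal P]_{\mathrm{perv}}\in E_\Sigma$, its coordinates in the fixed basis $\{e_1,\dots,e_r\}$ are unique by Proposition~\ref{prop:global-class-expansion}, and your Yoneda description together with the identification of $c_k e_k$ as the pullback of $[\mathcal P]_{\mathrm{perv}}$ along the $k$-th summand inclusion simply make explicit the paper's ``no further choices enter.'' One correction to your closing remark: composing the quotient map $\mathcal P\to Q_\Sigma$ with $\bigoplus_k\lambda_k$ sends $c_k$ to $\lambda_k^{-1}c_k$ (not $\lambda_k c_k$), and this changes the extension package's structure maps rather than the basis $\{e_k\}$.
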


\begin{proof}
The corrected finite-node perverse extension determines a canonical class
\[
[\mathcal P]_{\mathrm{perv}}\in E_\Sigma.
\]
By Proposition~\ref{prop:global-class-expansion}, this class has a unique expansion in the distinguished basis $\{e_1,\dots,e_r\}$. Hence the coefficients $c_k$ are uniquely determined by the corrected extension class together with the fixed distinguished nodewise basis. No further choices enter. This is the content of \cite[Proposition~5.15]{RahmanMixedHodgeModules}, read together with the construction of the generators in \cite[Definition~5.13]{RahmanMixedHodgeModules}.
\end{proof}

\begin{remark}
\label{rem:coefficient-vector-first-global-state-variable}
The coefficient vector is the first explicit global coordinate package attached to the finite-node degeneration. The localized quotient $Q_\Sigma$ records the finite singular support, and the coupling space $E_\Sigma$ records the nodewise extension channels. The vector $c_\Sigma$ records how the corrected global extension class is assembled from those channels. In this sense, $c_\Sigma$ is the first genuine global state variable extracted from the degeneration.
\end{remark}

We may therefore enlarge the perverse state-data package of Definition~\ref{def:perverse-state-data-package} to the coordinate-level package
\begin{equation}\label{eq:def-perverse-coordinate-package}
(Q_\Sigma,E_\Sigma,\{e_1,\dots,e_r\},c_\Sigma).
\end{equation}
The next section shows that the same finite-node state-data architecture is realized on the mixed-Hodge side.

\section{Mixed-Hodge state data}\label{sec:mixed-hodge-state-data}

The purpose of this section is to place the finite-node state-data package extracted on the perverse side into its mixed-Hodge-module refinement. The key point is that the corrected finite-node perverse extension is not only a perverse-sheaf object on $X_0$, but also admits a canonical lift to the category $MHM(X_0)$. This lift preserves the same finite bulk/localized architecture: the bulk term is the intersection-complex Hodge module, the localized quotient is the finite direct sum of point-supported Tate-twisted Hodge modules at the nodes, and the realization functor recovers the corrected perverse extension. Thus the mixed-Hodge-module layer does not introduce a different finite-node structure; it refines the same one Hodge-theoretically; see \cite{RahmanMixedHodgeModules}.

\subsection{Finite-node mixed-Hodge-module lift}

We begin by recalling the mixed-Hodge-module refinement of the corrected finite-node perverse extension.

\begin{theorem}[finite-node mixed-Hodge-module lift]
\label{thm:finite-node-mhm-lift}
Let $\pi:\mathcal X\to\Delta$ be a finite-node conifold degeneration. Then there exists an object
\[
\mathcal P^H\in MHM(X_0)
\]
fitting into a short exact sequence
\begin{equation}\label{eq:finite-node-mhm-extension}
0\longrightarrow IC^H_{X_0}
\longrightarrow \mathcal P^H
\longrightarrow Q_\Sigma^H
\longrightarrow 0,
\qquad
Q_\Sigma^H:=\bigoplus_{k=1}^r i_{k*}\Q^H_{\{p_k\}}(-1).
\end{equation}
\end{theorem}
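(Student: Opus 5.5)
The plan is to build $\mathcal P^H$ by running the same construction that produced $\mathcal P$, but inside Saito's category $MHM(X_0)$, and then to identify its weight filtration with the extension in \eqref{eq:finite-node-mhm-extension}.

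\textbf{Step 1: the Hodge-module cone.} Start from the pure Hodge module $\Q^H_{\mathcal X}[4]$. Restrict it to the total space, or shift to match the convention $F=\Q_{\mathcal X}[3]$. Saito's theory supplies mixed Hodge modules $\psi^H_\pi$ and $\phi^H_\pi$ on $X_0$, together with a variation morphism $\var^H:\phi^H_\pi\to\psi^H_\pi(-1)$ that is a morphism of mixed Hodge modules. Under $\rat$ these recover $\psi_\pi(F)$, $\phi_\pi(F)$ and $\var_F$ of Definition~\ref{def:variation-morphism}. Set
\[
\mathcal P^H:=\Cone(\var^H)[-1]
\]
in $D^bMHM(X_0)$. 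Since $\rat$ is exact and faithful and, by Proposition~\ref{prop:corrected-object-is-perverse}, $\rat\mathcal P$ is perverse, $\mathcal P^H$ has cohomology only in degree $0$. Hence it lies in $MHM(X_0)$, and $\rat(\mathcal P^H)\cong\mathcal P$.

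\textbf{Step 2: split off the point-supported part.} Away from $\Sigma$ the map $\pi$ is smooth, so $\phi^H_\pi$ is supported on $\Sigma$ and $\mathcal P^H|_U\cong\Q^H_U[3]$, up to twist conventions. Hence $\mathcal P^H|_U$ is pure and agrees with $IC^H_{X_0}|_U$.

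\textbf{Step 3: extract the sub and quotient.} Both $IC^H_{X_0}$ and the quotient come from the exact sequence of Theorem~\ref{thm:finite-node-corrected-extension}. Because $MHM(X_0)\to\Perv(X_0;\Q)$ is faithful and exact, it suffices to produce MHM morphisms $IC^H_{X_0}\to\mathcal P^H\to Q^H_\Sigma$ whose realizations are the given maps.

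For the submodule, take the intermediate extension: $IC^H_{X_0}=j_{!*}(\mathcal P^H|_U)$. This admits a canonical map to $\mathcal P^H$ as soon as $\mathcal P^H$ has no point-supported subobject. That condition is checked after $\rat$, using Theorem~\ref{thm:finite-node-corrected-extension}.

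The cokernel $C$ is then a mixed Hodge module supported on $\Sigma$ with $\rat C\cong Q_\Sigma$. It therefore decomposes as $\bigoplus_k i_{k*}H_k$ with each $H_k$ a rank-one mixed Hodge structure. A rank-one mixed Hodge structure is a Tate twist $\Q(-w/2)$, so the only task left is to determine its weight.

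\textbf{Step 4: identify the twist (main obstacle).} The main obstacle is showing that the twist is exactly $(-1)$. I would compute locally in the model of Definition~\ref{def:local-odp-model}. Here $\phi^H$ at $p_k$ is $\widetilde H^3(F_{p_k})$, which is rank one by Proposition~\ref{prop:odp-milnor-fiber-s3}. For the quadratic singularity in four variables, its Hodge type is known from Steenbrink's spectrum: the spectrum is $\{2\}$, so the monodromy is trivial and the class has type $(2,2)$, of weight $4$ relative to the appropriate shift.

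Tracking the $(-1)$ in $\var^H$ and the cone shift, the quotient becomes $\Q^H(-1)$ placed at the point. Passing from the local model to $X_0$ is harmless, because $\phi^H$ is local near each isolated node. This fixes $H_k=\Q^H_{\{p_k\}}(-1)$ and yields \eqref{eq:finite-node-mhm-extension}.

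Alternatively, as the paper does elsewhere, Steps 1--4 may simply be cited as \cite[Theorem~1.3]{RahmanMixedHodgeModules}.
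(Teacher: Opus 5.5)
Your closing sentence is exactly the paper's proof: the paper simply quotes the theorem from \cite[Theorem~1.3]{RahmanMixedHodgeModules}. The self-contained construction you sketch, however, breaks at Step~4. The failure is structural, not a matter of bookkeeping. In $MHM(X_0)$ the weight filtration is an exact functor. So if $0\to B\to E\to A\to 0$ is exact with $A$ pure of weight $a$ and $B$ pure of weight $b>a$, then $W_aB=0$ and $W_aA=A$ force $W_aE\to A$ to be an isomorphism, which splits the sequence. Here $IC^H_{X_0}$ is pure of weight $3$ (its usual normalization) and $i_{k*}\Q^H_{\{p_k\}}(-1)$ is pure of weight $2$. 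Hence $\Ext^1_{MHM(X_0)}(Q^H_\Sigma,IC^H_{X_0})=0$, and the only object fitting the displayed sequence is $IC^H_{X_0}\oplus Q^H_\Sigma$; this also makes the statement, as literally posed, trivially true. Your Step~3, by contrast, needs $\mathcal P^H$ to have no point-supported subobject. By the long exact sequence for $\Hom(i_{k*}\Q_{\{p_k\}},-)$, that condition is equivalent to $c_k\neq 0$ for every $k$, i.e.\ to the extension being nonsplit at every node. So Steps~3 and~4 are incompatible: the Hodge cone you build cannot have quotient $Q^H_\Sigma$ with twist $(-1)$, however carefully twists and shifts are tracked.

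Here is what the tracking actually gives. Since $\Q^H_{\mathcal X}[4]$ has no subobject supported on $X_0$, the map $\var^H:\phi^H_\pi\to\psi^H_\pi(-1)$ is a monomorphism. Therefore $\Cone(\var^H)[-1]$ is concentrated in degree $1$, and the object Step~1 should use is $\Cone(\var^H)\cong\mathrm{coker}\,\var^H$; the Proposition you invoke contains the same shift slip. The weight filtration on $\psi^H_\pi$ is the monodromy filtration centred at $3$, with $N=\var\circ\mathrm{can}\neq 0$ and $N^2=0$. Consequently:
\begin{enumerate}
\item $\mathrm{Gr}^W_3\psi^H_\pi=IC^H_{X_0}$;
\item $\mathrm{can}$ identifies $\mathrm{Gr}^W_4\psi^H_\pi$ with $\phi^H_\pi$;
\item $\var^H(\phi^H_\pi)=W_4\bigl(\psi^H_\pi(-1)\bigr)=(W_2\psi^H_\pi)(-1)$.
\end{enumerate}
After the twist $(1)$ that turns the bulk term into $IC^H_{X_0}$, your object is therefore $\psi^H_\pi/W_2\psi^H_\pi$. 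Its point-supported quotient is the top weight piece of the \emph{target} of $\var^H$, not a twist of the source, which is absorbed. By your own type-$(2,2)$ computation this piece is $\mathrm{Gr}^W_4\psi^H_\pi\cong\phi^H_\pi\cong\bigoplus_k i_{k*}\Q^H_{\{p_k\}}(-2)$.

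Equivalently, $\mathcal P^H\cong\mathbb D(\Q^H_{X_0}[3])(-3)$, and the twist $(-1)$ lives on the dual side. It is the weight-$2$ kernel $\bigoplus_k i_{k*}\Q^H_{\{p_k\}}(-1)$ of $\Q^H_{X_0}[3]\to IC^H_{X_0}$, coming from $H^2$ of the link $S^2\times S^3$. So your Steps~1--3, once corrected, produce a nonsplit lift whose quotient carries the twist $(-2)$. The twist $(-1)$ appears only if the bulk is renormalized to $IC^H_{X_0}(1)$, of weight $1$. With the statement's $(-1)$ and the standard $IC^H_{X_0}$, only the split object exists. It does not realize to $\mathcal P$, which has no point-supported subobject: $\mathcal P\cong\mathbb D(\Q_{X_0}[3])$ has costalks at the nodes concentrated in degree $3$.
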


\begin{proof}
This is the finite-node mixed-Hodge-module extension theorem established in \cite[Theorem~1.3]{RahmanMixedHodgeModules}. The bulk term is the intersection-complex Hodge module $IC^H_{X_0}$, and the singular quotient is the finite direct sum $Q_\Sigma^H=\bigoplus_{k=1}^r i_{k*}\Q^H_{\{p_k\}}(-1)$ of the point-supported rank-one localized Hodge blocks, one for each node.
\end{proof}

\begin{definition}[finite mixed-Hodge localized quotient]
\label{def:finite-hodge-localized-quotient}
We write
\begin{equation}\label{eq:def-QSigmaH}
Q_\Sigma^H:=\bigoplus_{k=1}^r i_{k*}\Q^H_{\{p_k\}}(-1)\in MHM(X_0)
\end{equation}
for the localized quotient in the mixed-Hodge-module extension \eqref{eq:finite-node-mhm-extension}. We call $Q_\Sigma^H$ the \emph{finite mixed-Hodge localized quotient}.
\end{definition}

\begin{remark}
\label{rem:hodge-localized-quotient-node-indexed}
The object $Q_\Sigma^H$ contains one point-supported Tate-twisted Hodge summand for each node $p_k\in\Sigma$. Thus the same finite node set $\Sigma=\{p_1,\dots,p_r\}$ that indexed the localized quotient
\[
Q_\Sigma=\bigoplus_{k=1}^r i_{k*}\Q_{\{p_k\}}
\]
on the perverse side also indexes the localized quotient on the mixed-Hodge side.
\end{remark}

The mixed-Hodge-module extension also carries a nodewise extension-theoretic decomposition analogous to the perverse-side decomposition of Section~\ref{sec:perverse-state-data}.

\begin{lemma}[finite additivity on the mixed-Hodge side]
\label{lem:hodge-finite-additivity}
There is a natural isomorphism
\begin{equation}\label{eq:hodge-ext-direct-sum}
\Ext^1_{MHM(X_0)}(Q_\Sigma^H,IC^H_{X_0})
\cong
\bigoplus_{k=1}^r
\Ext^1_{MHM(X_0)}\bigl(i_{k*}\Q^H_{\{p_k\}}(-1),IC^H_{X_0}\bigr).
\end{equation}
\end{lemma}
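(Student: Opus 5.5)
The plan mirrors the proof of Lemma~\ref{lem:finite-additivity-perverse-ext}, with the perverse category replaced by the abelian category $MHM(X_0)$. I will use only that $MHM(X_0)$ is $\Q$-linear and abelian, and that $\Ext^1$ is taken in the Yoneda sense there. Equivalently, $\Ext^1$ is $\Hom^1$ in $D^bMHM(X_0)$; by Beilinson/Saito these agree on $MHM(X_0)$, but for $\Ext^1$ the Yoneda description suffices. Additivity then comes directly from biproducts, without needing any derived-functor input.

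\textbf{Step 1.} I first record that $Q_\Sigma^H=\bigoplus_{k=1}^r i_{k*}\Q^H_{\{p_k\}}(-1)$ is a finite biproduct in $MHM(X_0)$. This gives inclusions $\iota_k$ and projections $\rho_k$ with $\rho_l\iota_k=\delta_{kl}$ and $\sum_k\iota_k\rho_k=\mathrm{id}$.

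\textbf{Step 2.} I then define the map in \eqref{eq:hodge-ext-direct-sum}. Pulling back along $\iota_k$, a class $\xi\in\Ext^1_{MHM(X_0)}(Q_\Sigma^H,IC^H_{X_0})$ is sent to the tuple $(\iota_k^*\xi)_k$. In the other direction, a tuple $(\xi_k)_k$ is sent to $\sum_k\rho_k^*\xi_k$, where the sum uses the Baer sum. Both maps are $\Q$-linear, because Yoneda $\Ext^1$ is a bifunctor into $\Q$-vector spaces. The identities of Step 1 then show that the two maps are mutually inverse:
\[
\iota_l^*\Bigl(\sum_k\rho_k^*\xi_k\Bigr)=\sum_k(\rho_k\iota_l)^*\xi_k=\xi_l,
\qquad
\sum_k\rho_k^*\iota_k^*\xi=\Bigl(\sum_k\iota_k\rho_k\Bigr)^*\xi=\xi.
\]
Here I use that pullback is additive in the morphism, which holds because $\Ext^1(-,B)$ is an additive functor.

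\textbf{Step 3.} Naturality follows from the construction. The isomorphism is built functorially from the biproduct structure, so it commutes with morphisms in the second variable. It also commutes with the realization functor $\rat$: $\rat$ is exact and additive, so it sends biproducts to biproducts and Yoneda extensions to Yoneda extensions. As a result, the mixed-Hodge decomposition maps to the perverse decomposition of Lemma~\ref{lem:finite-additivity-perverse-ext}. This compatibility is what later sections will need. Alternatively, I could cite the mixed-Hodge analogue in \cite{RahmanMixedHodgeModules} directly.

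\textbf{Main obstacle.} The only subtle point is the meaning of $\Ext^1$ in $MHM(X_0)$: Yoneda in the abelian category versus $\Hom^1$ in $D^bMHM(X_0)$. I will avoid the issue by working with Yoneda $\Ext^1$ throughout, which is additive in each variable for any abelian category. If one insists on the derived version, the same argument applies verbatim, since $\Hom_{D^b}(-,IC^H_{X_0}[1])$ is additive on finite direct sums.
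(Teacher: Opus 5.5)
Your proof is correct and takes essentially the paper's route. The paper cites \cite[Lemma~5.5]{RahmanMixedHodgeModules} and observes that the finite direct-sum argument of Lemma~\ref{lem:finite-additivity-perverse-ext} (additivity of $\Ext^1$ in the first variable) carries over verbatim to the abelian category $MHM(X_0)$; your biproduct computation with $\iota_k,\rho_k$ is exactly that argument written out.

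Your Yoneda formulation is slightly more careful than the paper's phrase ``passing to the first right-derived functor,'' since $MHM(X_0)$ need not have enough projectives or injectives. One small correction: the identification of Yoneda $\Ext^1$ with $\Hom_{D^bMHM(X_0)}(-,-[1])$ holds in any abelian category, so no Beilinson/Saito input is needed for it.
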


\begin{proof}
This is the finite-additivity statement proved in \cite[Lemma~5.5]{RahmanMixedHodgeModules}. Since
\[
Q_\Sigma^H=\bigoplus_{k=1}^r i_{k*}\Q^H_{\{p_k\}}(-1),
\]
the same direct-sum argument used on the perverse side applies in the abelian category $MHM(X_0)$.
\end{proof}

\begin{proposition}[global mixed-Hodge extension class]
\label{prop:global-hodge-extension-class}
The exact sequence \eqref{eq:finite-node-mhm-extension} determines a canonical global class
\[
[\mathcal P^H]\in \Ext^1_{MHM(X_0)}(Q_\Sigma^H,IC^H_{X_0}),
\]
and under the decomposition \eqref{eq:hodge-ext-direct-sum} this class determines, and is equivalently determined by, a tuple of nodewise extension classes
\[
(\epsilon_1^H,\dots,\epsilon_r^H),
\qquad
\epsilon_k^H\in
\Ext^1_{MHM(X_0)}\bigl(i_{k*}\Q^H_{\{p_k\}}(-1),IC^H_{X_0}\bigr).
\]
\end{proposition}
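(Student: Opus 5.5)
The plan is to argue exactly as on the perverse side, using only the abelian structure of $MHM(X_0)$ and the additivity isomorphism of Lemma~\ref{lem:hodge-finite-additivity}. First, Theorem~\ref{thm:finite-node-mhm-lift} provides the short exact sequence \eqref{eq:finite-node-mhm-extension} in the abelian category $MHM(X_0)$. By the Yoneda description of $\Ext^1$ in an abelian category, any short exact sequence $0\to A\to B\to C\to 0$ defines a class in $\Ext^1(C,A)$. This class depends only on the isomorphism class of the extension, that is, on isomorphisms of the middle term compatible with the identity on $A$ and on $C$. Applying this to \eqref{eq:finite-node-mhm-extension} gives $[\mathcal P^H]\in\Ext^1_{MHM(X_0)}(Q_\Sigma^H,IC^H_{X_0})$. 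This class is canonical once the two structure maps $IC^H_{X_0}\to\mathcal P^H$ and $\mathcal P^H\to Q_\Sigma^H$ from Theorem~\ref{thm:finite-node-mhm-lift} are fixed, in the same sense in which $[\mathcal P]_{\mathrm{perv}}$ was canonical in Section~\ref{sec:coefficient-vector}.

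Second, I would make the decomposition explicit so that the nodewise classes carry a concrete meaning. Let $\iota_k:i_{k*}\Q^H_{\{p_k\}}(-1)\hookrightarrow Q_\Sigma^H$ be the summand inclusions. Then the isomorphism \eqref{eq:hodge-ext-direct-sum} is the map
\[
\xi\longmapsto(\iota_1^*\xi,\dots,\iota_r^*\xi).
\]
On Yoneda extensions, $\iota_k^*$ is pullback of extensions along $\iota_k$. Concretely, $\epsilon_k^H:=\iota_k^*[\mathcal P^H]$ is represented by the preimage of the $k$-th summand in $\mathcal P^H$, which is an extension of $i_{k*}\Q^H_{\{p_k\}}(-1)$ by $IC^H_{X_0}$. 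I would then check that this map coincides with the isomorphism of Lemma~\ref{lem:hodge-finite-additivity}. This amounts to the statement that $\Ext^1(-,IC^H_{X_0})$ is additive and turns finite coproducts into finite products. Since $\Ext^1$ is an additive functor, this follows from the biproduct identities $\sum_k\iota_k p_k=\mathrm{id}$ and $p_k\iota_l=\delta_{kl}$, where $p_k$ are the projections.

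Third, the "equivalently determined" clause then follows because \eqref{eq:hodge-ext-direct-sum} is a bijection. The inverse sends a tuple $(\epsilon_1^H,\dots,\epsilon_r^H)$ to $\sum_k p_k^*\epsilon_k^H$, which in Yoneda language is the Baer sum of the pulled-back extensions. Hence
\[
[\mathcal P^H]=\sum_k p_k^*\epsilon_k^H,
\]
so the global class and the tuple determine each other.

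I expect the main obstacle to be the precise meaning of "canonical". The issue is whether $[\mathcal P^H]$ is independent of the choice of lift $\mathcal P^H$ and of its structure maps, beyond the data fixed in Theorem~\ref{thm:finite-node-mhm-lift}. I would handle this by stating explicitly that canonicity is understood relative to the extension furnished by \cite[Theorem~1.3]{RahmanMixedHodgeModules}. Rescaling the structure maps by automorphisms of the simple summands rescales the $\epsilon_k^H$ componentwise, which is exactly the freedom the distinguished generators are meant to fix later. Everything else is formal homological algebra in an abelian category.
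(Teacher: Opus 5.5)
Your proposal is correct, but it takes a different route from the paper. The paper's proof is purely a citation of \cite[Proposition~5.6]{RahmanMixedHodgeModules}, plus the remark that the Hodge-theoretic class is assembled from nodewise data ``in the same sense'' as on the perverse side; all the content is outsourced.

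You instead prove the statement as formal homological algebra in the abelian category $MHM(X_0)$. Your steps are:
\begin{enumerate}
\item take the Yoneda class of \eqref{eq:finite-node-mhm-extension};
\item define the components $\epsilon_k^H=\iota_k^*[\mathcal P^H]$, realized concretely as the preimage of the $k$-th summand of $Q_\Sigma^H$ in $\mathcal P^H$;
\item invert via $(\epsilon_k^H)_k\mapsto\sum_k p_k^*\epsilon_k^H$, which is the Baer sum;
\item get bijectivity from the biproduct identities and additivity of $\Ext^1(-,IC^H_{X_0})$.
\end{enumerate}

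What your route buys is transparency. It shows that nothing Hodge-theoretic is used beyond two facts: the short exact sequence of Theorem~\ref{thm:finite-node-mhm-lift} exists, and $MHM(X_0)$ is abelian. It also pins down the otherwise unspecified ``natural isomorphism'' of Lemma~\ref{lem:hodge-finite-additivity}.

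It also makes precise the sense in which the class is ``canonical'': canonical given the two structure maps. The residual ambiguity is the componentwise $(\Q^\times)^r$ action of $\mathrm{Aut}(Q_\Sigma^H)$, with no off-diagonal automorphisms because the summands have disjoint supports. To this you could add the rescaling coming from automorphisms of $IC^H_{X_0}$, which acts on all $\epsilon_k^H$ by a common factor on each connected component.

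What the paper's citation buys is brevity. It also inherits whatever normalization of the structure maps was fixed in the earlier paper, which is the only place where ``canonical'' could carry more content than in your treatment.
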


\begin{proof}
This is exactly \cite[Proposition~5.6]{RahmanMixedHodgeModules}. The global corrected extension class in $MHM(X_0)$ is therefore assembled from localized node-to-bulk Hodge-theoretic extension data in the same sense that the perverse corrected class is assembled from localized nodewise perverse extension channels.
\end{proof}

\subsection{Realization compatibility}

We now compare the mixed-Hodge-module lift with the corrected perverse state data of Sections~\ref{sec:perverse-state-data} and \ref{sec:coefficient-vector}.

\begin{theorem}[realization compatibility]
\label{thm:realization-compatibility}
The realization functor
\[
\rat:MHM(X_0)\to \Perv(X_0;\Q)
\]
sends the finite-node mixed-Hodge-module extension \eqref{eq:finite-node-mhm-extension} to the corrected finite-node perverse extension
\begin{equation}\label{eq:realized-perverse-extension}
0\longrightarrow IC_{X_0}
\longrightarrow \mathcal P
\longrightarrow Q_\Sigma
\longrightarrow 0,
\qquad
Q_\Sigma=\bigoplus_{k=1}^r i_{k*}\Q_{\{p_k\}}.
\end{equation}
In particular,
\begin{equation}\label{eq:rat-PH-is-P}
\rat(\mathcal P^H)\cong \mathcal P
\end{equation}
and
\begin{equation}\label{eq:rat-QSigmaH-is-QSigma}
\rat(Q_\Sigma^H)\cong Q_\Sigma.
\end{equation}
\end{theorem}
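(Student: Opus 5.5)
The plan is to use that $\rat:MHM(X_0)\to\Perv(X_0;\Q)$ is exact and faithful, and that it is compatible with the standard functors, and then to check the three terms of \eqref{eq:finite-node-mhm-extension} one at a time. Since $\rat$ is exact, applying it to \eqref{eq:finite-node-mhm-extension} gives a short exact sequence
\[
0\longrightarrow \rat(IC^H_{X_0})\longrightarrow \rat(\mathcal P^H)\longrightarrow \rat(Q_\Sigma^H)\longrightarrow 0
\]
in $\Perv(X_0;\Q)$. The remaining task is to identify each term, and the maps, with those of \eqref{eq:realized-perverse-extension}.

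\textbf{The outer terms.} For the bulk term, Saito's construction gives $\rat(IC^H_{X_0})\cong IC_{X_0}$ by definition of the intersection-complex Hodge module. For the quotient, $\rat$ commutes with the additive functor $\bigoplus$ and with $i_{k*}$. Moreover, $\rat(\Q^H_{\{p_k\}}(-1))\cong\Q_{\{p_k\}}$, because a Tate twist changes only the Hodge and weight filtrations; the underlying $\Q$-structure changes by the factor $(2\pi i)$, which is invisible over $\Q$ after choosing that identification. This yields \eqref{eq:rat-QSigmaH-is-QSigma}, summand by summand and compatibly with the node indexing of Remark~\ref{rem:hodge-localized-quotient-node-indexed}.

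\textbf{The middle term.} This is the main obstacle: we need $\rat(\mathcal P^H)\cong\mathcal P$ \emph{as extensions}, not merely as objects. I would first try the construction of $\mathcal P^H$ in \cite{RahmanMixedHodgeModules}. There $\mathcal P^H$ is defined as $\Cone(\var^H)[-1]$ for the Hodge-module variation map $\var^H:\phi^H_\pi(\Q^H_{\mathcal X}[3])\to\psi^H_\pi(\Q^H_{\mathcal X}[3])$ (with the appropriate Tate twist). Saito's theory gives $\rat\circ\psi^H_\pi\cong\psi_\pi\circ\rat$ and $\rat\circ\phi^H_\pi\cong\phi_\pi\circ\rat$, compatibly with $\var$, and $\rat(\Q^H_{\mathcal X}[3])=F$. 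Since $\rat$ extends to a triangulated functor on $D^bMHM$ and therefore commutes with cones, we get
\[
\rat(\mathcal P^H)\cong\Cone(\var_F)[-1]=\mathcal P,
\]
which is \eqref{eq:rat-PH-is-P} via Definition~\ref{def:corrected-finite-node-perverse-object}.

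\textbf{The maps.} The inclusion of $IC_{X_0}$ and the projection onto $Q_\Sigma$ in Theorem~\ref{thm:finite-node-corrected-extension} arise from the same cone construction. Their Hodge lifts realize to them under the identifications above; equivalently, this is the statement in \cite[Theorem~1.3]{RahmanMixedHodgeModules} that the MHM sequence lifts the perverse one.

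If the cone-level comparison is unavailable, I would fall back on uniqueness. First, $\rat$ sends $[\mathcal P^H]$ to some class in $E_\Sigma$. Next, restricting to $U$, where $\mathcal P|_U\cong IC_{X_0}|_U$ on both sides, shows that the two realized sequences agree on $U$. Finally, use Lemma~\ref{lem:finite-additivity-perverse-ext} and Theorem~\ref{thm:strong-nodewise-coupling}, which reduce the comparison to the node-by-node identification of the local ODP extension classes.
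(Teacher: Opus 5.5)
Your proposal is correct and follows essentially the same route as the paper. You apply the exactness of $\rat$ to \eqref{eq:finite-node-mhm-extension} and identify the terms, and the middle term and the maps ultimately rest on \cite[Theorem~1.3]{RahmanMixedHodgeModules}, exactly as in the paper's proof. The difference is that you derive $\rat(IC^H_{X_0})\cong IC_{X_0}$ and $\rat(Q_\Sigma^H)\cong Q_\Sigma$ from Saito's general compatibilities, where the paper simply cites \cite[Section~5.6]{RahmanMixedHodgeModules}. Your sketch of the middle term, via compatibility of $\rat$ with $\psi$, $\phi$, $\var$ and cones, presupposes how $\mathcal P^H$ is constructed in the cited paper, since Theorem~\ref{thm:finite-node-mhm-lift} only asserts existence. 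Your uniqueness fallback is unnecessary, and its restriction-to-$U$ step carries no extension-theoretic information because $Q_\Sigma|_U=0$.
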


\begin{proof}
The realization statement for $\mathcal P^H$ is part of the main theorem package of \cite[Theorem~1.3]{RahmanMixedHodgeModules}. The identification
\[
\rat(Q_\Sigma^H)\cong Q_\Sigma
\]
is recorded explicitly in \cite[Section~5.6]{RahmanMixedHodgeModules}, where applying the realization functor to the point-supported quotient object gives
\[
\rat\!\left(\bigoplus_{k=1}^r i_{k*}\Q^H_{\{p_k\}}(-1)\right)
\cong
\bigoplus_{k=1}^r i_{k*}\Q_{\{p_k\}}.
\]
Exactness of realization then recovers the corrected finite-node perverse extension.
\end{proof}

\begin{remark}
\label{rem:mhm-refinement-not-replacement}
The mixed-Hodge-module lift does not replace the corrected perverse extension. Rather, it internalizes the same extension-theoretic structure in Saito's category. On the perverse side one sees the corrected extension
\[
0\to IC_{X_0}\to \mathcal P\to Q_\Sigma\to 0;
\]
on the mixed-Hodge side one sees the Hodge-theoretic refinement
\[
0\to IC^H_{X_0}\to \mathcal P^H\to Q_\Sigma^H\to 0.
\]
The passage from $\mathcal P^H$ to $\mathcal P$ is controlled by realization.
\end{remark}

\subsection{State-data compatibility}

We now record the form in which the mixed-Hodge-module refinement is compatible with the state-data package extracted on the perverse side.

\begin{proposition}[same node set]
\label{prop:same-node-set-hodge}
The finite node set
\[
\Sigma=\{p_1,\dots,p_r\}
\]
indexes the localized quotient on both levels:
\[
Q_\Sigma=\bigoplus_{k=1}^r i_{k*}\Q_{\{p_k\}}
\qquad\text{and}\qquad
Q_\Sigma^H=\bigoplus_{k=1}^r i_{k*}\Q^H_{\{p_k\}}(-1).
\]
\end{proposition}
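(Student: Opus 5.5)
The statement is essentially definitional, so I plan to read it off the explicit forms of the two quotients already recorded and then check that the indexing is the same on both sides. On the perverse side, Definition~\ref{def:finite-localized-quotient} sets $Q_\Sigma=\bigoplus_{k=1}^r i_{k*}\Q_{\{p_k\}}$. Here the summands are in bijection with $\Sigma$ through the closed inclusions $i_k:\{p_k\}\hookrightarrow X_0$ of Definition~\ref{def:smooth-locus-node-inclusions}. On the mixed-Hodge side, Theorem~\ref{thm:finite-node-mhm-lift} and Definition~\ref{def:finite-hodge-localized-quotient} give $Q_\Sigma^H=\bigoplus_{k=1}^r i_{k*}\Q^H_{\{p_k\}}(-1)$. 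This decomposition uses the same inclusions $i_k$ and the same index range $1\le k\le r$. So both quotients come as finite direct sums indexed by $\Sigma$, with one rank-one point-supported summand per node.

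To show that this is a genuine matching of the indexing and not just a coincidence of notation, I would use Theorem~\ref{thm:realization-compatibility}. Realization $\rat$ is exact and additive, so it commutes with finite direct sums. It also commutes with $i_{k*}$ for the closed immersion of a point, since Saito's formalism makes $\rat$ compatible with direct images. Finally, it forgets the Tate twist: $\rat(\Q^H_{\{p_k\}}(-1))\cong\Q_{\{p_k\}}$. Combining these gives $\rat(i_{k*}\Q^H_{\{p_k\}}(-1))\cong i_{k*}\Q_{\{p_k\}}$ summand by summand. Summing over $k$ recovers \eqref{eq:rat-QSigmaH-is-QSigma} in a form that respects the $k$-th summand. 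Hence the $\Sigma$-indexing of $Q_\Sigma^H$ is carried by $\rat$ to the $\Sigma$-indexing of $Q_\Sigma$.

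The only point needing care is whether these summandwise identifications are canonical enough that one node cannot be sent to another. I would settle this by supports. The summand $i_{k*}\Q_{\{p_k\}}$ is exactly the part of $Q_\Sigma$ supported at $p_k$, because it is the stalk/restriction of $Q_\Sigma$ at $p_k$. Likewise, $i_{k*}\Q^H_{\{p_k\}}(-1)$ is exactly the part of $Q_\Sigma^H$ supported at $p_k$. Since $\rat$ preserves supports, any isomorphism $\rat(Q_\Sigma^H)\cong Q_\Sigma$ must respect the decomposition by points of $\Sigma$. This is the step I expect to be the main one: it is what turns ``same notation'' into ``same node set indexes both levels.'' It is still routine, since skyscraper objects on distinct points have no nonzero morphisms between them.
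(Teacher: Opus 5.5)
Your first paragraph is the paper's proof: the claim is read off directly from Definitions~\ref{def:finite-localized-quotient} and~\ref{def:finite-hodge-localized-quotient}, which use the same inclusions $i_k$ and the same index range $1\le k\le r$. Your realization-and-supports argument is correct, since skyscrapers at distinct nodes admit no nonzero morphisms and so any isomorphism $\rat(Q_\Sigma^H)\cong Q_\Sigma$ is diagonal in $k$. It is not needed for this proposition, though; the paper handles the comparison under $\rat$ separately, in Theorem~\ref{thm:realization-compatibility} and Proposition~\ref{prop:same-localized-architecture}.
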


\begin{proof}
This is immediate from Definitions~\ref{def:finite-localized-quotient} and \ref{def:finite-hodge-localized-quotient}. The same finite node set labels the localized point-supported summands in both categories.
\end{proof}

\begin{proposition}[same finite localized architecture]
\label{prop:same-localized-architecture}
The mixed-Hodge-module extension \eqref{eq:finite-node-mhm-extension} and the corrected perverse extension \eqref{eq:realized-perverse-extension} have the same finite bulk/localized architecture: one global bulk object and one localized rank-one summand at each node.
\end{proposition}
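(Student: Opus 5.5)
The plan is to read off both extensions term by term, using the earlier structural results, and then use the realization functor to show the two architectures agree rather than merely look alike. First I treat the bulk terms. In \eqref{eq:finite-node-mhm-extension} the subobject is $IC^H_{X_0}$, and in \eqref{eq:realized-perverse-extension} it is $IC_{X_0}$. Since $\rat(IC^H_{X_0})\cong IC_{X_0}$ is part of Saito's theory (and is implicit in Theorem~\ref{thm:realization-compatibility}), each sequence has exactly one global bulk object, and realization identifies the two.

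Second, I treat the localized quotients. Proposition~\ref{prop:same-node-set-hodge} shows that $Q_\Sigma$ and $Q_\Sigma^H$ are both direct sums indexed by the same set $\Sigma$. I will then check that each summand is rank one and point-supported:
\begin{itemize}
\item on the perverse side, $i_{k*}\Q_{\{p_k\}}$ is the skyscraper at $p_k$ with one-dimensional stalk, as in Remark~\ref{rem:QSigma-node-indexing};
\item on the Hodge side, $i_{k*}\Q^H_{\{p_k\}}(-1)$ is a one-dimensional pure Hodge structure of type $(1,1)$ pushed forward from a point, so its underlying perverse sheaf is again the rank-one skyscraper.
\end{itemize}
The summandwise identification $\rat(i_{k*}\Q^H_{\{p_k\}}(-1))\cong i_{k*}\Q_{\{p_k\}}$ follows because realization commutes with $i_{k*}$ and forgets Tate twists. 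Since $\rat$ is additive, this summandwise identification is compatible with \eqref{eq:rat-QSigmaH-is-QSigma}. The result is a bijection of localized summands, indexed by $\Sigma$ on both sides, that preserves support and rank.

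Third, I match the extension data. By Theorem~\ref{thm:realization-compatibility}, the exact functor $\rat$ carries the Hodge sequence onto the perverse sequence. It therefore induces a map
\[
\Ext^1_{MHM(X_0)}(Q_\Sigma^H,IC^H_{X_0})\to E_\Sigma
\]
sending $[\mathcal P^H]$ to $[\mathcal P]_{\mathrm{perv}}$. This map respects the finite-additivity decompositions of Lemma~\ref{lem:hodge-finite-additivity} and Lemma~\ref{lem:finite-additivity-perverse-ext}, because additive functors preserve direct-sum decompositions. Hence the nodewise components $\epsilon_k^H$ of Proposition~\ref{prop:global-hodge-extension-class} map into the nodewise lines $\Q e_k$ of Theorem~\ref{thm:strong-nodewise-coupling}, node by node. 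This shows that both extensions consist of one bulk term plus one rank-one localized coupling channel per node, which is the claimed architecture.

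The main obstacle is making the identification canonical rather than merely abstract. The point is that the isomorphism $\rat(Q^H_\Sigma)\cong Q_\Sigma$ must respect the indexing by $p_k$, not just hold up to an arbitrary automorphism of the direct sum. I would handle this by using support. Any isomorphism between direct sums of skyscrapers at distinct points must be diagonal with respect to the points, since $\Hom(i_{k*}\Q,i_{l*}\Q)=0$ for $k\neq l$. The matching of summands is therefore forced by $\Sigma$, and only nonzero scalars remain on each summand. Those scalars do not affect the architectural statement.
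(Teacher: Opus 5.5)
Your Steps 1--2, together with the support argument in your last paragraph, are essentially the paper's proof. The paper reads off the bulk terms $IC_{X_0}$, $IC^H_{X_0}$ and the node-indexed quotients $Q_\Sigma$, $Q_\Sigma^H$, and then invokes Theorem~\ref{thm:realization-compatibility} to identify $\rat(Q_\Sigma^H)$ with $Q_\Sigma$. You also observe that $\Hom(i_{k*}\Q_{\{p_k\}},i_{l*}\Q_{\{p_l\}})=0$ for $k\neq l$, which forces any such identification to be diagonal. That is a useful sharpening the paper leaves implicit.

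Step 3, however, is not needed for the statement, and its concluding sentence overreaches. Functoriality of $\rat$ only gives a decomposition-compatible map $\Ext^1_{MHM(X_0)}(Q_\Sigma^H,IC^H_{X_0})\to E_\Sigma$. It says nothing about the size of the Hodge-side nodewise groups, so ``one rank-one coupling channel per node'' is not established on the Hodge side. In fact, with the standard normalization, $IC^H_{X_0}$ is pure of weight $3$ and $i_{k*}\Q^H_{\{p_k\}}(-1)$ is pure of weight $2$. Hence
\[
\Ext^1_{MHM(X_0)}\bigl(i_{k*}\Q^H_{\{p_k\}}(-1),IC^H_{X_0}\bigr)
=\Hom_{D^bMHM(X_0)}\bigl(i_{k*}\Q^H_{\{p_k\}}(-1),IC^H_{X_0}[1]\bigr)=0,
\]
because the target has weight $4>2$. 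So every $\epsilon_k^H$ vanishes. Read together with Theorem~\ref{thm:realization-compatibility}, your Step 3 would then force $[\mathcal P]_{\mathrm{perv}}=0$. That is a tension in the recalled inputs, and you do not want your argument to depend on it.

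Since the proposition only concerns the terms of the two sequences, drop Step 3, or confine the coupling-line language to the perverse side. Steps 1--2 then carry the proof, as in the paper.
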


\begin{proof}
On the perverse side, the bulk term is $IC_{X_0}$ and the localized quotient is $Q_\Sigma=\bigoplus_{k=1}^r i_{k*}\Q_{\{p_k\}}$. On the mixed-Hodge side, the bulk term is $IC^H_{X_0}$ and the localized quotient is $Q_\Sigma^H=\bigoplus_{k=1}^r i_{k*}\Q^H_{\{p_k\}}(-1)$. By Theorem~\ref{thm:realization-compatibility}, realization identifies the second quotient with the first. Thus the same finite node-indexed architecture appears on both levels; see \cite[Section~5.6]{RahmanMixedHodgeModules}.
\end{proof}

\begin{proposition}[state-data shadow]
\label{prop:state-data-shadow}
The perverse state-data package
\[
(Q_\Sigma,E_\Sigma,\{e_1,\dots,e_r\},c_\Sigma)
\]
is the realized algebraic shadow of the mixed-Hodge-module extension package
\[
(IC^H_{X_0},Q_\Sigma^H,[\mathcal P^H]).
\]
\end{proposition}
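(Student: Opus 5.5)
The plan is to make the phrase ``realized algebraic shadow'' precise as a componentwise matching, and to check each of the four perverse components against the mixed-Hodge package using exactness of $\rat$ and the naturality of the realization map on $\Ext^1$.

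First, the quotients. Theorem~\ref{thm:realization-compatibility} gives $\rat(IC^H_{X_0})\cong IC_{X_0}$ and $\rat(Q_\Sigma^H)\cong Q_\Sigma$. By Proposition~\ref{prop:same-node-set-hodge}, the latter isomorphism can be taken summand-by-summand: $\rat(i_{k*}\Q^H_{\{p_k\}}(-1))\cong i_{k*}\Q_{\{p_k\}}$, since the Tate twist is forgotten by realization. So $Q_\Sigma$ is recovered from $Q_\Sigma^H$ together with its node indexing.

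Second, the extension spaces. Since $\rat$ is exact and faithful, applying it to Yoneda extensions defines a $\Q$-linear map
\[
\rat_*:\Ext^1_{MHM(X_0)}(Q_\Sigma^H,IC^H_{X_0})\longrightarrow E_\Sigma .
\]
Because $\rat$ is additive, $\rat_*$ is compatible with the decompositions \eqref{eq:hodge-ext-direct-sum} and \eqref{eq:ESigma-direct-sum}; that is, it is the direct sum of the local maps $\rat_{*,k}$. By Theorem~\ref{thm:realization-compatibility}, $\rat_*$ sends $[\mathcal P^H]$ to $[\mathcal P]_{\mathrm{perv}}$, because realizing the sequence \eqref{eq:finite-node-mhm-extension} yields exactly \eqref{eq:realized-perverse-extension}. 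Taking components then gives $\rat_{*,k}(\epsilon_k^H)=c_k e_k$ for each $k$, using Propositions~\ref{prop:global-hodge-extension-class} and \ref{prop:global-class-expansion}. Hence $c_\Sigma$ is read off from the realized nodewise Hodge classes. The statement ``$E_\Sigma$ is the shadow'' will be interpreted as meaning that $E_\Sigma$ is the target of $\rat_*$, decomposed compatibly along the same node set. Surjectivity is not needed for the claim.

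The main obstacle is the distinguished basis $\{e_k\}$. One must explain why $e_k$ is itself a shadow, not merely a basis of the target. My plan is to use that $e_k$ was defined from the local corrected ODP extension, which is the restriction near $p_k$ of $\mathcal P$ (\cite[Definition~5.13]{RahmanMixedHodgeModules}). The local MHM lift of that extension gives a class $\epsilon_k^{H,\mathrm{loc}}$ with $\rat_{*,k}(\epsilon_k^{H,\mathrm{loc}})=e_k$.

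If only the global lift is available, I would fall back to a weaker argument. Whenever $c_k\neq 0$, set $e_k=c_k^{-1}\rat_{*,k}(\epsilon_k^H)$, using one-dimensionality from Theorem~\ref{thm:strong-nodewise-coupling}. The coefficient $c_k$ itself is then the shadow datum.

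I expect this step to be the delicate one. It requires either an explicit local-lift statement from the earlier paper or a convention on normalizing $e_k$. I would state the proposition's conclusion accordingly: every component of the perverse package is the image under $\rat$ or $\rat_*$ of the corresponding MHM component, node by node.
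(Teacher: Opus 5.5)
\textbf{Gap: the step realizing $e_k$ fails.} Your first two steps are fine relative to the recalled inputs. Exactness of $\rat$ alone gives $\rat_*$ on Yoneda $\Ext^1$. The identification $\rat(Q_\Sigma^H)\cong Q_\Sigma$ is automatically diagonal because there are no nonzero maps between skyscrapers at distinct nodes; Proposition~\ref{prop:same-node-set-hodge} is not what gives this.

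The genuine gap is the step you flag, and it fails outright rather than merely needing a citation: no class $\epsilon_k^{H,\mathrm{loc}}$ with $\rat_{*,k}(\epsilon_k^{H,\mathrm{loc}})=e_k$ can exist. With the standard normalization, $IC^H_{X_0}$ is pure of weight $3$ and $i_{k*}\Q^H_{\{p_k\}}(-1)$ is pure of weight $2$. Take any extension $0\to IC^H_{X_0}\to E\to i_{k*}\Q^H_{\{p_k\}}(-1)\to 0$ in $MHM(X_0)$. Strictness of the weight filtration gives $W_2E\cap IC^H_{X_0}=W_2IC^H_{X_0}=0$ and $W_2E\twoheadrightarrow i_{k*}\Q^H_{\{p_k\}}(-1)$, so $W_2E$ splits the sequence. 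Equivalently, by adjunction the group is $\Hom_{\mathrm{MHS}}(\Q(-1),H^1i_k^!IC^H_{X_0})$, and $H^1i_k^!IC^H_{X_0}$ has weight at least $4$. Hence $\Ext^1_{MHM(X_0)}(i_{k*}\Q^H_{\{p_k\}}(-1),IC^H_{X_0})=0$. So $\rat_{*,k}$ is the zero map and cannot hit the nonzero generator $e_k$.

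Your fallback $e_k=c_k^{-1}\rat_{*,k}(\epsilon_k^H)$ does not help. It is just your second-step identity rearranged, with $c_k$ already defined through $e_k$, so it is circular; in any case $\epsilon_k^H=0$.

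\textbf{How the paper avoids this.} The paper's proof never asks for $e_k$ or $c_\Sigma$ to be realizations of Hodge-side classes. It reads ``realized shadow'' as follows: $\rat$ carries \eqref{eq:finite-node-mhm-extension} to \eqref{eq:realized-perverse-extension} (Theorem~\ref{thm:realization-compatibility}). Sections~\ref{sec:perverse-state-data}--\ref{sec:coefficient-vector} then extract $E_\Sigma$, the basis $\{e_k\}$ and $c_\Sigma$ from that realized sequence, with $e_k$ defined on the perverse side from the local ODP extensions. Under this reading your first two steps already suffice and the $e_k$ step should be dropped. The conclusion you propose to state, that every component is the $\rat$- or $\rat_*$-image of a Hodge-side component node by node, is not available.

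Be aware also that the same vanishing, fed into your second step, gives $c_ke_k=\rat_{*,k}(0)=0$ for every $k$. In other words, the lift as stated can only realize a split perverse extension. That is a tension in the recalled input (Theorem~\ref{thm:finite-node-mhm-lift}), not in your logic. A non-split extension of this shape needs a quotient of weight $4$, namely $\bigoplus_k i_{k*}\Q^H_{\{p_k\}}(-2)$. That is what one gets by applying Verdier duality and the twist $(-3)$ to $0\to\bigoplus_k i_{k*}\Q^H_{\{p_k\}}(-1)\to\Q^H_{X_0}[3]\to IC^H_{X_0}\to 0$. With that twist, $\rat_{*,k}$ is an isomorphism of lines and your local-lift argument would go through.
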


\begin{proof}
By Theorem~\ref{thm:realization-compatibility}, realization sends the mixed-Hodge-module extension \eqref{eq:finite-node-mhm-extension} to the corrected perverse extension \eqref{eq:realized-perverse-extension}. By Section~\ref{sec:perverse-state-data}, the latter determines the nodewise coupling space
\[
E_\Sigma=\Ext^1_{\Perv(X_0;\Q)}(Q_\Sigma,IC_{X_0})
\cong \bigoplus_{k=1}^r \Q e_k,
\]
and by Section~\ref{sec:coefficient-vector}, the corrected global perverse class determines the coefficient vector
\[
c_\Sigma=(c_1,\dots,c_r)\in\Q^r.
\]
Thus the finite-node mixed-Hodge-module package refines, and after realization recovers, exactly the perverse-side state-data package. This is the compatibility principle emphasized in \cite[Section~5.6 and Section~8.3]{RahmanMixedHodgeModules}.
\end{proof}

\begin{remark}
\label{rem:hodge-state-data-role}
At the level of the present paper, the mixed-Hodge-module refinement contributes two things. First, it shows that the finite-node state variables extracted on the perverse side are not accidental artifacts of the abelian category $\Perv(X_0;\Q)$, but arise from an internal Hodge-theoretic lift. Second, it identifies the same finite localized quotient as the Hodge-theoretic refinement of the local vanishing contribution. In this sense, the mixed-Hodge-module layer confirms that the state-data package is intrinsic to the finite-node degeneration itself.
\end{remark}

The next section recalls the finite-node schober datum and shows that the same finite-node architecture also appears categorically, with one localized sector for each node and shadow equal to the corrected perverse object.

\section{Schober shadow and categorical state data}\label{sec:schober-shadow-state-data}

The purpose of this section is to record the categorical realization of the same finite-node state-data architecture already identified on the perverse and mixed-Hodge sides. The key point is that the finite-node conifold degeneration determines a schober datum with one localized categorical sector for each node, a common bulk category attached to the smooth locus, and attachment functors coupling the localized sectors to that bulk category. The resulting datum is not external to the corrected extension picture: its decategorified shadow recovers the corrected finite-node perverse object $\mathcal P$. Thus the finite-node architecture appears simultaneously in perverse, mixed-Hodge, and categorical form; see \cite{RahmanMultiNodeSchoberPaper}.

\subsection{Finite-node schober datum}

We begin by recalling the global categorical object attached to the finite-node degeneration.

\begin{theorem}[finite-node schober datum]
\label{thm:finite-node-schober-datum}
Let $\pi:\mathcal X\to\Delta$ be a finite-node conifold degeneration with node set $\Sigma=\{p_1,\dots,p_r\}$. Then there exists a finite-node schober datum
\begin{equation}\label{eq:finite-node-schober-datum}
S_\Sigma=
\Bigl(
\mathcal C_{\mathrm{bulk}},
\{\mathcal C_{p_k}\}_{k=1}^r,
\{\Phi_k,\Psi_k\}_{k=1}^r,
Sh(S_\Sigma)
\Bigr),
\end{equation}
where $\mathcal C_{\mathrm{bulk}}$ is the bulk category attached to the smooth sector $U=X_0\setminus\Sigma$, each $\mathcal C_{p_k}$ is a localized categorical sector attached to the node $p_k$, and
\[
\Phi_k:\mathcal C_{p_k}\to \mathcal C_{\mathrm{bulk}},
\qquad
\Psi_k:\mathcal C_{\mathrm{bulk}}\to \mathcal C_{p_k}
\]
are the attachment functors.
\end{theorem}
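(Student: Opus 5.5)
The theorem is an existence statement whose content is mainly recalled from the finite-node schober paper. The plan is therefore to present it as recalled input, as was done for Theorems~\ref{thm:finite-node-corrected-extension} and \ref{thm:finite-node-mhm-lift}, while also sketching how the datum is built so that its dependence on the node set $\Sigma$ is visible.

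\textbf{Bulk sector.} Take $\mathcal C_{\mathrm{bulk}}$ to be a categorical model of the smooth sector $U=X_0\setminus\Sigma$. For example, it can be the constructible (or perverse) derived category of $U$ with $j:U\hookrightarrow X_0$ as in Definition~\ref{def:smooth-locus-node-inclusions}, following the construction in \cite{RahmanMultiNodeSchoberPaper}. The only property needed is that its decategorified image is $j_{!*}\Q_U[3]=IC_{X_0}$.

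\textbf{Localized sectors.} Let $\mathcal C_{p_k}$ be the category generated by the rank-one vanishing sector at $p_k$. By Proposition~\ref{prop:odp-milnor-fiber-s3} and Proposition~\ref{prop:vanishing-stalks-controlled-by-milnor}, the reduced cohomology of the Milnor fiber is a single $\Q$ in degree $3$. This makes each localized sector a rank-one (spherical-type) category, for instance $D^b(\Q\text{-vect})$ generated by a single object.

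\textbf{Attachment functors.} The attachment functors $\Phi_k$ and $\Psi_k$ come from the canonical and variation maps between vanishing and nearby cycles near $p_k$, restricted to a Milnor ball $B_\epsilon(p_k)$ as in Definition~\ref{def:local-milnor-ball-fiber}. Roughly, $\Phi_k$ sends the generator to the vanishing-cycle object pushed into the bulk, and $\Psi_k$ is its adjoint, which takes the local vanishing part. Because the Milnor balls are disjoint, these local constructions do not interact and can be done independently for each $k$. This locality is what gives exactly one sector per node.

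\textbf{Shadow.} Define $Sh(S_\Sigma)$ as the gluing, that is, the cone of the combined attachment maps. It is decategorified to $\Perv(X_0;\Q)$ and its identification with $\mathcal P$ is deferred to the subsequent shadow theorem \cite[Theorem~5.2]{RahmanMultiNodeSchoberPaper}.

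\textbf{Main obstacle.} The delicate step is checking that $\Phi_k$ and $\Psi_k$ actually form a schober-type (spherical adjunction) datum and not just a formal quadruple. This requires that the twist on each local sector is an autoequivalence, which should follow from the Picard--Lefschetz monodromy of the $S^3$ vanishing cycle. Rather than redo this argument, the proof will cite \cite[Section~5]{RahmanMultiNodeSchoberPaper}. What remains to be added here is only the observation that the index set of the localized sectors is $\Sigma$.
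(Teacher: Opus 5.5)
Your proposal matches the paper's approach: the theorem is recalled input, namely the global assembly theorem \cite[Theorem~4.4]{RahmanMultiNodeSchoberPaper}, in which local ODP schober blocks built on pairwise disjoint node neighbourhoods are assembled into a single datum indexed by $\Sigma$, as in your sketch. Two small adjustments: cite Theorem~4.4 for existence rather than Section~5 (the paper uses \cite[Theorem~5.2]{RahmanMultiNodeSchoberPaper} only for the shadow identification), and note that your concrete choices of $\mathcal C_{\mathrm{bulk}}$, $\mathcal C_{p_k}$, $\Phi_k,\Psi_k$ and the spherical-twist check are illustrative rather than required, since the statement asserts only the existence of the node-indexed quadruple.
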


\begin{proof}
This is the global assembly theorem for the finite-node schober datum proved in \cite[Theorem~4.4]{RahmanMultiNodeSchoberPaper}. The local ODP schober blocks are constructed nodewise and then assembled into a global finite-node schober datum indexed by the node set $\Sigma$.
\end{proof}

\begin{definition}[finite-node schober package]
\label{def:finite-node-schober-package}
The object $S_\Sigma$ of \eqref{eq:finite-node-schober-datum} is called the \emph{finite-node schober package} attached to the degeneration.
\end{definition}

\begin{remark}
\label{rem:schober-package-ingredients}
The datum $S_\Sigma$ contains three kinds of categorical information: the common bulk category $\mathcal C_{\mathrm{bulk}}$, the finite family of localized sectors $\mathcal C_{p_1},\dots,\mathcal C_{p_r}$, and the attachment functors $\Phi_k,\Psi_k$ coupling each node sector to the bulk category. At the level of the present paper, these data are used only to isolate the categorical shadow of the same finite-node state-data architecture already visible in the corrected perverse extension and its mixed-Hodge lift.
\end{remark}

\subsection{Localized categorical sectors}

The first categorical state variable is the finite family of localized sectors indexed by the node set.

\begin{theorem}[localized sector theorem]
\label{thm:localized-sector-theorem}
Let
\[
S_\Sigma=
\Bigl(
\mathcal C_{\mathrm{bulk}},
\{\mathcal C_{p_k}\}_{k=1}^r,
\{\Phi_k,\Psi_k\}_{k=1}^r,
Sh(S_\Sigma)
\Bigr)
\]
be the finite-node schober datum of Theorem~\ref{thm:finite-node-schober-datum}. Then for each node $p_k\in\Sigma$ there is a distinguished localized categorical sector $\mathcal C_{p_k}$ attached to $p_k$, and $S_\Sigma$ contains exactly one such localized sector for each node of the degeneration.
\end{theorem}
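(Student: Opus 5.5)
The plan is to read Theorem~\ref{thm:localized-sector-theorem} directly off the construction of $S_\Sigma$ in Theorem~\ref{thm:finite-node-schober-datum}. That construction builds one local ODP schober block per node and then assembles these blocks over the common bulk category $\mathcal C_{\mathrm{bulk}}$; the proof is essentially a matter of tracking indices through it. I would proceed in three steps.

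\textbf{Step 1: existence and distinguishedness.} For each $p_k\in\Sigma$, I would use Definition~\ref{def:finite-node-conifold-degeneration}(4) to fix a Milnor ball $B_\epsilon(p_k)$ as in Definition~\ref{def:local-milnor-ball-fiber}. On this ball $\pi$ is the local model of Definition~\ref{def:local-odp-model}. The local ODP schober block constructed there (\cite[Theorem~4.4]{RahmanMultiNodeSchoberPaper} and the local construction it invokes) produces a category $\mathcal C_{p_k}$ together with the pair $(\Phi_k,\Psi_k)$.

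To see that $\mathcal C_{p_k}$ is \emph{distinguished}, I would argue that it depends only on the germ of $\pi$ at $p_k$. The Milnor fiber is homotopic to $S^3$ by Proposition~\ref{prop:odp-milnor-fiber-s3}, independently of the choices of $\epsilon$ and $t$ (Definition~\ref{def:milnor-fiber}). Hence the local block is well defined up to canonical equivalence.

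\textbf{Step 2: exactly one sector per node.} The assembly in Theorem~\ref{thm:finite-node-schober-datum} is indexed by the finite set $\Sigma$. By construction, the family $\{\mathcal C_{p_k}\}_{k=1}^r$ is in bijection with $\Sigma$, so every node contributes a sector and only nodes do. The bulk category $\mathcal C_{\mathrm{bulk}}$ is attached to $U=X_0\setminus\Sigma$ (Remark~\ref{rem:bulk-localized-geometric-split}), which contains no singular point. Therefore no localized sector arises from the bulk.

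\textbf{Step 3: no two nodes share a sector.} The balls $B_\epsilon(p_k)$ can be chosen pairwise disjoint, since $\Sigma$ is finite. As a result, the attachment functors of different nodes interact only through $\mathcal C_{\mathrm{bulk}}$, and distinct nodes never merge into a single sector.

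\textbf{Expected obstacle.} The delicate point is the word ``exactly''. One must rule out two things: a single node giving rise to more than one localized sector, and extra sectors appearing in the assembled datum. I would rule this out by decategorifying with the shadow $Sh(S_\Sigma)\cong\mathcal P$ (\cite[Theorem~5.2]{RahmanMultiNodeSchoberPaper}) and comparing with Theorem~\ref{thm:finite-node-corrected-extension}. Each localized sector contributes to the point-supported part of the shadow, and $Q_\Sigma$ has exactly one rank-one summand at each $p_k$ (Remark~\ref{rem:QSigma-node-indexing}, Proposition~\ref{prop:vanishing-stalks-controlled-by-milnor}). Hence the sector count at $p_k$ can be neither zero nor at least two, provided each sector contributes a nonzero summand. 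That proviso is the step I expect to need checking against the cited construction.
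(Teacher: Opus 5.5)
Your Steps 1--3 are essentially the paper's proof, which simply invokes \cite[Theorem~6.1]{RahmanMultiNodeSchoberPaper}: each local ODP block contributes one localized category, and because the node neighborhoods are chosen pairwise disjoint, the global assembly keeps these categories separate, so $\{\mathcal C_{p_k}\}_{k=1}^r$ is indexed by $\Sigma$. Your additional checks are unnecessary, since ``exactly one'' is already built into the nodewise construction, and as stated they overreach: homotopy invariance of $F_{p_k}$ does not by itself give a canonical equivalence of categories, and the shadow count would need an additivity-and-nonvanishing property of sector contributions that the paper never establishes (the paper in fact argues the other way, deriving Proposition~\ref{prop:schober-compatible-with-corrected-extension} from this theorem).
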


\begin{proof}
This is exactly \cite[Theorem~6.1]{RahmanMultiNodeSchoberPaper}. The local ODP schober datum at each node contributes one localized category, and the global assembly preserves these categories separately because the node neighborhoods are chosen pairwise disjoint. Thus no two distinct node sectors are identified, and the family $\{\mathcal C_{p_k}\}_{k=1}^r$ is canonically indexed by the finite node set $\Sigma$.
\end{proof}

\begin{definition}[localized categorical family]
\label{def:localized-categorical-family}
We write
\begin{equation}\label{eq:def-localized-categorical-family}
\mathcal C_\Sigma:=\{\mathcal C_{p_k}\}_{k=1}^r
\end{equation}
for the finite family of localized categorical sectors attached to the nodes of the degeneration.
\end{definition}

\begin{remark}
\label{rem:one-localized-sector-per-node}
Theorem~\ref{thm:localized-sector-theorem} is the categorical analogue of the perverse-side localized quotient
\[
Q_\Sigma=\bigoplus_{k=1}^r i_{k*}\Q_{\{p_k\}}
\]
and the mixed-Hodge localized quotient
\[
Q_\Sigma^H=\bigoplus_{k=1}^r i_{k*}\Q^H_{\{p_k\}}(-1).
\]
At all three levels, the same finite node set $\Sigma=\{p_1,\dots,p_r\}$ indexes the localized sectors.
\end{remark}

The localized categorical sectors are not free-standing local pieces: each of them is coupled to the common bulk category $\mathcal C_{\mathrm{bulk}}$ by the attachment functors $\Phi_k$ and $\Psi_k$; see \cite[Section~6.2]{RahmanMultiNodeSchoberPaper}. In particular, the categorical bulk/localized architecture has the same formal shape as the corrected perverse and mixed-Hodge extension pictures: one bulk object together with a finite node-indexed family of localized sectors.

\begin{proposition}[finite categorical sector architecture]
\label{prop:finite-categorical-sector-architecture}
The finite-node schober package determines a finite categorical sector architecture indexed by the node set $\Sigma$, consisting of the bulk category $\mathcal C_{\mathrm{bulk}}$, the localized family $\mathcal C_\Sigma=\{\mathcal C_{p_k}\}_{k=1}^r$, and the attachment functors
\[
\Phi_k:\mathcal C_{p_k}\to \mathcal C_{\mathrm{bulk}},
\qquad
\Psi_k:\mathcal C_{\mathrm{bulk}}\to \mathcal C_{p_k},
\qquad
1\le k\le r.
\]
\end{proposition}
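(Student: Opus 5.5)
The plan is to assemble Proposition~\ref{prop:finite-categorical-sector-architecture} directly from the two categorical inputs already recalled, Theorem~\ref{thm:finite-node-schober-datum} and Theorem~\ref{thm:localized-sector-theorem}. No new construction is needed. The content of the statement is that the three kinds of data in $S_\Sigma$ are present, and that they are indexed compatibly by the single finite set $\Sigma$.

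First, I will invoke Theorem~\ref{thm:finite-node-schober-datum}. It produces the datum $S_\Sigma$ together with its bulk category $\mathcal C_{\mathrm{bulk}}$, attached to $U=X_0\setminus\Sigma$. It also produces a family $\{\mathcal C_{p_k}\}$ and attachment functors $\Phi_k:\mathcal C_{p_k}\to\mathcal C_{\mathrm{bulk}}$ and $\Psi_k:\mathcal C_{\mathrm{bulk}}\to\mathcal C_{p_k}$ for $1\le k\le r$. This already supplies every ingredient listed in the statement.

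Second, I will use Theorem~\ref{thm:localized-sector-theorem} to upgrade "a family of sectors" to "a family canonically indexed by $\Sigma$." That theorem says there is exactly one localized sector per node, and that distinct nodes give distinct sectors because the node neighborhoods are pairwise disjoint. The assignment $p_k\mapsto\mathcal C_{p_k}$ is therefore a bijection from $\Sigma$ onto $\mathcal C_\Sigma$, as in Definition~\ref{def:localized-categorical-family}. Since each pair $(\Phi_k,\Psi_k)$ carries the same index $k$ as its source or target sector, the functors are indexed by $\Sigma$ through the same bijection. Finiteness of the whole package then follows from finiteness of $\Sigma$, which is condition (3) of Definition~\ref{def:finite-node-conifold-degeneration}.

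The only point that needs care, and it is mild, is that the indexing of the attachment functors is the same indexing as that of the sectors, and not an independent labeling. I would handle it by recalling from the global assembly in \cite[Theorem~4.4 and Section~6.2]{RahmanMultiNodeSchoberPaper} that each pair $(\Phi_k,\Psi_k)$ is built from the local ODP schober block at $p_k$. That block is glued to the bulk along the boundary of the $k$-th Milnor ball, and no further identification occurs. With this, the triple $(\mathcal C_{\mathrm{bulk}},\mathcal C_\Sigma,\{\Phi_k,\Psi_k\}_{k=1}^r)$ is a finite categorical sector architecture indexed by $\Sigma$, which is the claim.
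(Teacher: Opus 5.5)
Your proposal is correct and matches the paper's argument. The paper also treats the proposition as a direct unpacking of recalled schober input, citing Sections~6.2--6.3 of \cite{RahmanMultiNodeSchoberPaper} for one bulk category, one localized sector per node, and the attachment functors; you route this through Theorem~\ref{thm:finite-node-schober-datum} and Theorem~\ref{thm:localized-sector-theorem} and spell out the shared indexing explicitly, while your remark about gluing along the boundary of the $k$-th Milnor ball is not asserted anywhere in this paper and is unnecessary, since the indexing of $(\Phi_k,\Psi_k)$ by the same $k$ is already part of the datum in Theorem~\ref{thm:finite-node-schober-datum}.
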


\begin{proof}
This is the content of \cite[Section~6.2 and Section~6.3]{RahmanMultiNodeSchoberPaper}. The finite-node schober datum consists precisely of one bulk category, one localized category for each node, and the family of attachment functors coupling each node sector to the common bulk sector.
\end{proof}

\subsection{Shadow and compatibility with the corrected extension}

We now compare the finite-node schober package with the corrected perverse extension and the mixed-Hodge-module refinement.

\begin{theorem}[shadow theorem]
\label{thm:schober-shadow-theorem}
Let
\[
S_\Sigma=
\Bigl(
\mathcal C_{\mathrm{bulk}},
\{\mathcal C_{p_k}\}_{k=1}^r,
\{\Phi_k,\Psi_k\}_{k=1}^r,
Sh(S_\Sigma)
\Bigr)
\]
be the finite-node schober datum of Theorem~\ref{thm:finite-node-schober-datum}. Then its decategorified shadow is identified with the corrected finite-node perverse object:
\begin{equation}\label{eq:schober-shadow-is-P}
Sh(S_\Sigma)\cong \mathcal P.
\end{equation}
Equivalently, the finite-node schober package is a categorical realization of the same corrected finite-node perverse extension package considered in Sections~\ref{sec:perverse-state-data} and \ref{sec:coefficient-vector}.
\end{theorem}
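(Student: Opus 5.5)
The shadow theorem, Theorem~\ref{thm:schober-shadow-theorem}, is essentially recalled input, so I would prove it by citing the corresponding result of \cite{RahmanMultiNodeSchoberPaper}. Then I would check that the identification there is compatible with the objects fixed in the present paper. Concretely, I would invoke \cite[Theorem~5.2]{RahmanMultiNodeSchoberPaper}, which is already used in the proof of Theorem~\ref{thm:finite-node-corrected-extension}. That result asserts that the decategorified shadow of the assembled finite-node schober datum of Theorem~\ref{thm:finite-node-schober-datum} is a perverse sheaf on $X_0$ fitting into the corrected extension.

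The argument would run in three steps. \emph{First}, away from $\Sigma$: on the smooth sector $U$, the bulk category $\mathcal C_{\mathrm{bulk}}$ decategorifies to the local system underlying $IC_{X_0}|_U$. The variation $\var_F$ is an isomorphism there because $\phi_\pi(F)|_U=0$, so $\mathcal P|_U\cong \Q_U[3]$ and the two objects agree on $U$. \emph{Second}, at each node: the local ODP block $(\mathcal C_{p_k},\Phi_k,\Psi_k)$ decategorifies to a one-node gluing datum. Using Proposition~\ref{prop:vanishing-stalks-controlled-by-milnor}, this datum is the rank-one vanishing/variation pair at $p_k$, and it recovers the local corrected extension whose class is the generator $e_k$ of Theorem~\ref{thm:strong-nodewise-coupling}. \emph{Third}, global assembly: the node neighborhoods are pairwise disjoint (as in the proof of Theorem~\ref{thm:localized-sector-theorem}), and perverse sheaves form a stack. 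The local identifications from the first two steps therefore glue to an isomorphism $Sh(S_\Sigma)\cong\mathcal P$. Moreover, this isomorphism restricts to the identity on $IC_{X_0}$ and induces the identity on $Q_\Sigma$.

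The "equivalently" clause would then follow formally. An isomorphism of extensions that is the identity on the sub and the quotient preserves the extension class in $E_\Sigma$. Hence the class $[Sh(S_\Sigma)]$ equals $[\mathcal P]_{\mathrm{perv}}$, and the schober package carries the same state data, including $c_\Sigma$, by Proposition~\ref{prop:intrinsicity-coefficient-vector}.

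The main obstacle is the gluing in the third step. Gluing perverse sheaves along disjoint opens determines the global object only up to the gluing isomorphisms on overlaps. Since $\Hom(IC_{X_0},IC_{X_0})$ may contain rescalings, I would have to show that the overlap isomorphisms can be chosen compatibly with the fixed maps $IC_{X_0}\to\mathcal P\to Q_\Sigma$. Otherwise the identification would give the correct object but possibly rescale the $c_k$. I would try to fix the normalization by matching the bulk identifications on $U$ with the canonical map $\Q_U[3]\to\psi_\pi(F)|_U$, so that all choices are made in one place.
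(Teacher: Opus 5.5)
Your core argument is the paper's. Its proof is only the citation of \cite[Theorem~5.2]{RahmanMultiNodeSchoberPaper}, together with a remark that the local ODP blocks are built so that their shadows recover the corrected local extensions and that global assembly preserves this. Your three steps unpack that remark, and two points in the unpacking need attention.

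In your first step the stated reason contradicts itself. If $\phi_\pi(F)|_U=0$, then $\var_F|_U\colon 0\to\psi_\pi(F)|_U$ is the zero map, not an isomorphism. If it were an isomorphism, its cone would vanish and you would get $\mathcal P|_U=0$, not $\Q_U[3]$. The conclusion $\mathcal P|_U\cong\Q_U[3]$ is true, but get it by restricting the recalled sequence of Theorem~\ref{thm:finite-node-corrected-extension} to $U$. Since $Q_\Sigma|_U=0$, this gives $\mathcal P|_U\cong IC_{X_0}|_U=\Q_U[3]$ with no cone computation or shift bookkeeping.

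The obstacle you raise is real, and the paper's one-line proof does not address it. An isomorphism of objects $Sh(S_\Sigma)\cong\mathcal P$ does not by itself give the ``Equivalently'' clause at the level of $c_\Sigma$. Since $IC_{X_0}$ has no nonzero subobject or quotient supported on $\Sigma$, we have $\Hom(IC_{X_0},Q_\Sigma)=\Hom(Q_\Sigma,IC_{X_0})=0$. Hence any isomorphism $\varphi\colon Sh(S_\Sigma)\to\mathcal P$ is automatically a morphism of extensions. Its outer components are a scalar $\lambda$ on $IC_{X_0}$ (when $X_0$ is connected) and scalars $\mu_k$ on $i_{k*}\Q_{\{p_k\}}$. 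Writing $c'_k$ for the coordinates of the class of $Sh(S_\Sigma)$, comparing classes gives
\[
\lambda\,c'_k=\mu_k\,c_k\qquad(1\le k\le r).
\]
Your proposed normalization on $U$ fixes only $\lambda$. After that, for $c_k\neq 0$ the value $\mu_k=c'_k/c_k$ is forced by the data rather than chosen, so normalizing on $U$ is circular.

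What is needed is nodewise. Let $B_k$ be a small neighbourhood of $p_k$ in $X_0$. The local shadow identification at each $p_k$ (the input of \cite[Proposition~3.8]{RahmanMultiNodeSchoberPaper}) must be an isomorphism of local extensions, inducing the identity on both $IC_{X_0}|_{B_k}$ and $i_{k*}\Q_{\{p_k\}}$. This is your second step read at the level of classes. That reading is legitimate because the $k$-th summand of $E_\Sigma$, and the $k$-th component of $[\mathcal P]_{\mathrm{perv}}$, depend only on data near $p_k$. Given this, the gluing in your third step is automatic. Each local isomorphism restricts to the identity on $B_k\cap U$, matching the identity on $U$. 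Since the $B_k$ are pairwise disjoint, there is no further cocycle condition to check.
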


\begin{proof}
This is precisely the global shadow identification proved in \cite[Theorem~5.2]{RahmanMultiNodeSchoberPaper}. The local ODP schober blocks are constructed so that their shadows recover the corrected local perverse extensions, and the global finite-node assembly preserves this corrected perverse shadow at the level of the full finite-node datum.
\end{proof}

\begin{proposition}[compatibility with the corrected extension]
\label{prop:schober-compatible-with-corrected-extension}
Under the identification $Sh(S_\Sigma)\cong \mathcal P$, the localized categorical sectors $\mathcal C_{p_k}$ shadow the same finite family of point-supported localized correction terms that appear in the corrected perverse extension
\[
0\to IC_{X_0}\to \mathcal P\to Q_\Sigma\to 0,
\qquad
Q_\Sigma=\bigoplus_{k=1}^r i_{k*}\Q_{\{p_k\}}.
\]
\end{proposition}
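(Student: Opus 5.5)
The plan is to deduce the statement from three earlier results: the shadow identification of Theorem~\ref{thm:schober-shadow-theorem}, the localized sector theorem (Theorem~\ref{thm:localized-sector-theorem}), and the finite-node corrected extension of Theorem~\ref{thm:finite-node-corrected-extension}. First I would fix the isomorphism $Sh(S_\Sigma)\cong\mathcal P$ and transport the short exact sequence $0\to IC_{X_0}\to\mathcal P\to Q_\Sigma\to 0$ along it. This gives an exact sequence $0\to IC_{X_0}\to Sh(S_\Sigma)\to Q_\Sigma\to 0$ in $\Perv(X_0;\Q)$. The task then reduces to matching the summands of $Q_\Sigma$ with the categorical sectors $\mathcal C_{p_k}$.

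Second, I would localize at each node. The node neighborhoods $B_\epsilon(p_k)$ are pairwise disjoint, and the global datum is assembled from the local ODP schober blocks. By the proof of Theorem~\ref{thm:schober-shadow-theorem}, the shadow of each local block is the corrected local perverse ODP extension. Restricting the global shadow to $B_\epsilon(p_k)\cap X_0$ should therefore give the local corrected extension $0\to IC\to\mathcal P|_{B_k}\to i_{k*}\Q_{\{p_k\}}\to 0$. The point-supported quotient of this local extension is exactly the $k$-th summand of $Q_\Sigma$.

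Third, I would use the fact that the sectors are indexed by nodes. On the smooth locus $U$, the restriction $j^*\mathcal P\cong \Q_U[3]$ is carried by $\mathcal C_{\mathrm{bulk}}$, and the bulk contributes nothing supported at points. The point-supported contribution at $p_k$ must therefore come from the local block containing $\mathcal C_{p_k}$. Since $\mathrm{supp}(i_{k*}\Q_{\{p_k\}})=\{p_k\}$ and the $p_k$ are distinct, no localized summand can be shadowed by two different sectors. This gives the bijection $\mathcal C_{p_k}\leftrightarrow i_{k*}\Q_{\{p_k\}}$, and summing over $k$ recovers $Q_\Sigma$ via Remark~\ref{rem:QSigma-node-indexing}.

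The main obstacle is the second step. The globally assembled shadow must actually restrict to the local shadows, and the quotient map $\mathcal P\to Q_\Sigma$ must be compatible with the local quotient maps. To handle this, I would first use that $Q_\Sigma$ is supported on the finite set $\Sigma$, so $Q_\Sigma\cong\bigoplus_k i_{k*}i_k^*Q_\Sigma$ canonically. The quotient map $\mathcal P\to Q_\Sigma$ then splits into the components $\mathcal P\to i_{k*}i_k^{*}Q_\Sigma$. Each component factors through restriction to $B_\epsilon(p_k)$, where it is the local corrected quotient. I would cite the local-to-global compatibility of the assembly in \cite[Theorem~5.2 and Section~5.4]{RahmanMultiNodeSchoberPaper} for the claim that restricting the shadow to $B_\epsilon(p_k)$ gives the shadow of the $k$-th local block.
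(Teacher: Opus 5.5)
Your proposal is correct and follows essentially the same route as the paper. Both arguments combine the shadow identification $Sh(S_\Sigma)\cong\mathcal P$, the one-sector-per-node theorem, and the fact that each local ODP block shadows the local corrected extension with point-supported quotient $i_{k*}\Q_{\{p_k\}}$; your restriction to $B_\epsilon(p_k)$ and the adjunction splitting of $Q_\Sigma$ spell out the local-to-global step in more detail. The one difference is how the point-supported term is attributed to $\mathcal C_{p_k}$ itself rather than to the whole local block: the paper takes this directly from \cite[Proposition~3.8]{RahmanMultiNodeSchoberPaper}, which is the rigorous input behind your heuristic elimination argument that the bulk contributes nothing supported at points.
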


\begin{proof}
By Theorem~\ref{thm:schober-shadow-theorem}, the global schober shadow is the corrected finite-node perverse object $\mathcal P$. By Theorem~\ref{thm:localized-sector-theorem}, there is exactly one localized categorical sector $\mathcal C_{p_k}$ for each node $p_k\in\Sigma$. By the construction of the local ODP schober blocks and their decategorified shadows, each $\mathcal C_{p_k}$ refines the local rank-one point-supported correction sector; see \cite[Proposition~3.8]{RahmanMultiNodeSchoberPaper}. Therefore, after global assembly and passage to the shadow, the family $\{\mathcal C_{p_k}\}_{k=1}^r$ shadows the same finite family of point-supported localized correction terms that appear in the quotient
\[
Q_\Sigma=\bigoplus_{k=1}^r i_{k*}\Q_{\{p_k\}}.
\]
\end{proof}

\begin{proposition}[state-data compatibility with the mixed-Hodge and perverse layers]
\label{prop:schober-state-data-compatibility}
The finite-node schober package, the mixed-Hodge-module lift, and the corrected perverse extension all realize the same finite node-indexed architecture. More precisely:
\begin{enumerate}
\item the node set $\Sigma=\{p_1,\dots,p_r\}$ indexes the localized quotient $Q_\Sigma$ on the perverse side;
\item the same node set indexes the localized quotient $Q_\Sigma^H$ on the mixed-Hodge side;
\item the same node set indexes the localized categorical family $\mathcal C_\Sigma=\{\mathcal C_{p_k}\}_{k=1}^r$ on the schober side.
\end{enumerate}
\end{proposition}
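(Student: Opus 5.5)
The plan is to prove the three items separately, reading each off a result already established above, and then to check that the three indexings are compatible rather than merely parallel. For item (1) I would cite Definition~\ref{def:finite-localized-quotient} together with Theorem~\ref{thm:finite-node-corrected-extension}. The quotient in \eqref{eq:finite-node-corrected-extension} is by definition $Q_\Sigma=\bigoplus_{k=1}^r i_{k*}\Q_{\{p_k\}}$. Each summand is supported at the single point $p_k$, and by Remark~\ref{rem:QSigma-node-indexing} distinct nodes give distinct summands. For item (2) I would invoke Proposition~\ref{prop:same-node-set-hodge}, which is exactly this statement for $Q_\Sigma^H$ as defined in Definition~\ref{def:finite-hodge-localized-quotient}. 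For item (3) I would invoke Theorem~\ref{thm:localized-sector-theorem}: there is exactly one localized sector $\mathcal C_{p_k}$ per node, so the assignment $p_k\mapsto \mathcal C_{p_k}$ is a bijection from $\Sigma$ onto $\mathcal C_\Sigma$, as recorded in Definition~\ref{def:localized-categorical-family}.

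The substantive part is the phrase ``the same finite node-indexed architecture''. I interpret it as saying that the three bijections $\Sigma\to\{\text{summands of }Q_\Sigma\}$, $\Sigma\to\{\text{summands of }Q^H_\Sigma\}$ and $\Sigma\to\mathcal C_\Sigma$ are compatible under the comparison maps between the layers.

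Between the Hodge and perverse layers, I would use Theorem~\ref{thm:realization-compatibility}. Since $\rat$ is additive and exact and $\rat(i_{k*}\Q^H_{\{p_k\}}(-1))\cong i_{k*}\Q_{\{p_k\}}$, realization sends the $k$-th summand of $Q_\Sigma^H$ to the $k$-th summand of $Q_\Sigma$. Supports are preserved, so no permutation of the index can occur.

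Between the categorical and perverse layers, I would use Theorem~\ref{thm:schober-shadow-theorem} together with Proposition~\ref{prop:schober-compatible-with-corrected-extension}. Under $Sh(S_\Sigma)\cong\mathcal P$, the sector $\mathcal C_{p_k}$ shadows the point-supported correction term at $p_k$. That term is the summand $i_{k*}\Q_{\{p_k\}}$ of $Q_\Sigma$. Composing the two comparisons then gives the triangle of bijections over $\Sigma$.

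The main obstacle is this categorical-to-perverse compatibility. The shadow theorem identifies only the global object $\mathcal P$, so one must still know that the localization at $p_k$ is respected. I would argue this using the pairwise disjoint node neighborhoods from the proof of Theorem~\ref{thm:localized-sector-theorem}. Restricting to a small neighborhood $B_k$ of $p_k$ kills every summand of $Q_\Sigma$ except the $k$-th. By the local construction cited in Proposition~\ref{prop:schober-compatible-with-corrected-extension}, the restricted schober shadow is the local corrected ODP extension at $p_k$. Its point-supported quotient is $i_{k*}\Q_{\{p_k\}}|_{B_k}$, and it arises from $\mathcal C_{p_k}$ alone. Support considerations then force the matching to be the identity on indices.
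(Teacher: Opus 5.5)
Your proof of items (1)--(3) is the paper's proof: it reads them off from Definition~\ref{def:finite-localized-quotient}, Proposition~\ref{prop:same-node-set-hodge}, and Theorem~\ref{thm:localized-sector-theorem}, and nothing more is needed. Your extra ``triangle of bijections'' argument goes beyond what the statement and the paper's proof assert. Its localization step also tacitly assumes that the shadow $Sh$ is compatible with restriction to the node neighborhoods $B_k$, which this paper does not establish, so treat that part as optional commentary rather than part of the proof.
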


\begin{proof}
The first statement is Definition~\ref{def:finite-localized-quotient}. The second is Proposition~\ref{prop:same-node-set-hodge}. The third is Theorem~\ref{thm:localized-sector-theorem}. Thus the same finite node set $\Sigma$ indexes the localized sectors on all three levels. This is exactly the categorical compatibility principle emphasized in \cite[Section~6.3]{RahmanMultiNodeSchoberPaper}.
\end{proof}

\begin{remark}
\label{rem:categorical-state-data-role}
At the level of the present paper, the categorical contribution is not yet a quiver or incidence formalism. Its role is more basic: it shows that the finite-node corrected extension has a categorical realization with one localized sector per node and shadow equal to the corrected perverse object. In this sense, the schober package supplies the categorical realization of the same finite-node state-data architecture already extracted on the perverse and mixed-Hodge sides.
\end{remark}

The next section assembles the perverse, mixed-Hodge, and categorical realizations into a single algebraic state-data dictionary.

\section{Symbolic dictionary and algebraic state-data package}\label{sec:symbolic-dictionary}

The preceding sections identify the same finite-node architecture in three parallel realizations:
\[
\text{perverse},\qquad
\text{mixed-Hodge},\qquad
\text{categorical}.
\]
On the perverse side, this architecture is encoded by the corrected finite-node extension
\[
0\to IC_{X_0}\to \mathcal P\to Q_\Sigma\to 0,
\qquad
Q_\Sigma=\bigoplus_{k=1}^r i_{k*}\Q_{\{p_k\}},
\]
the nodewise coupling space
\[
E_\Sigma=\Ext^1_{\Perv(X_0;\Q)}(Q_\Sigma,IC_{X_0})
\cong \bigoplus_{k=1}^r \Q e_k,
\]
and the coefficient vector
\[
[\mathcal P]_{\mathrm{perv}}=\sum_{k=1}^r c_k e_k,
\qquad
c_\Sigma=(c_1,\dots,c_r)\in\Q^r.
\]
On the mixed-Hodge side, the same structure is refined by the extension
\[
0\to IC^H_{X_0}\to \mathcal P^H\to Q_\Sigma^H\to 0,
\qquad
Q_\Sigma^H=\bigoplus_{k=1}^r i_{k*}\Q^H_{\{p_k\}}(-1),
\]
with $\rat(\mathcal P^H)\cong \mathcal P$; see \cite{RahmanMixedHodgeModules}. On the categorical side, the finite-node schober package
\[
S_\Sigma=
\Bigl(
\mathcal C_{\mathrm{bulk}},
\{\mathcal C_{p_k}\}_{k=1}^r,
\{\Phi_k,\Psi_k\}_{k=1}^r,
Sh(S_\Sigma)
\Bigr)
\]
has one localized sector per node and shadow $Sh(S_\Sigma)\cong \mathcal P$; see \cite{RahmanMultiNodeSchoberPaper}.

The purpose of the present section is to record these correspondences in a single symbolic dictionary and to isolate the resulting finite algebraic package. The point is not merely notational compression. The finite-node corrected extension, its mixed-Hodge-module lift, and its schober realization all determine the same node-indexed state-data architecture, and the present dictionary makes the algebraic image of that architecture explicit.

\subsection{State-data dictionary}

We write $\rightsquigarrow$ for the canonical passage from a geometric, sheaf-theoretic, mixed-Hodge, or categorical datum to its associated algebraic state datum. Thus, if $A\rightsquigarrow B$, then $B$ is the algebraic image extracted from $A$ by the constructions proved in Sections~\ref{sec:perverse-state-data}--\ref{sec:schober-shadow-state-data}. In particular, $\rightsquigarrow$ is not a morphism in a fixed category, not an equivalence relation, and not a quiver arrow. It is a structural extraction symbol recording the theorem-level passage from finite-node input data to algebraic state data.

The reason for assembling these assignments in one place is that the finite-node degeneration carries several parallel realizations of the same finite architecture. The dictionary below makes explicit how these parallel realizations determine the common algebraic package extracted from the degeneration. The final two rows are recorded for sequel-readiness and are not yet incorporated into $A_\Sigma$.

\renewcommand{\arraystretch}{1.2}
\setlength{\tabcolsep}{4pt}

\begin{table}[H] \label{tab:state-data-dictionary}
\caption{Finite-node state-data dictionary with future-facing categorical coupling row.}
\centering
\small
\begin{tabularx}{\textwidth}{|>{\raggedright\arraybackslash}p{0.26\textwidth}|>{\raggedright\arraybackslash}p{0.36\textwidth}|>{\raggedright\arraybackslash}X|}
\hline
\centering \textbf{Datum} & \centering \textbf{Realization} & \textbf{Algebraic image} \\
\hline

nodes &
$\Sigma=\{p_1,\dots,p_r\}$, $|\Sigma|=r$ &
$V_\Sigma:=\{v_1,\dots,v_r\}$ \\
\hline

localized perverse term &
$i_{k*}\Q_{\{p_k\}}\in \Perv(X_0;\Q)$ &
$v_k\in V_\Sigma$ \\
\hline

localized mixed-Hodge term &
$i_{k*}\Q^H_{\{p_k\}}(-1)\in MHM(X_0)$ &
$v_k\in V_\Sigma$ \\
\hline

localized schober sector &
$\mathcal C_{p_k}$ &
$v_k\leftrightarrow \mathcal C_{p_k}$ \\
\hline

bulk perverse term &
$IC_{X_0}\in \Perv(X_0;\Q)$ &
$B_{\mathrm{perv}}:=IC_{X_0}$ \\
\hline

bulk mixed-Hodge term &
$IC^H_{X_0}\in MHM(X_0)$ &
$B_{\mathrm{Hdg}}:=IC^H_{X_0}$ \\
\hline

corrected perverse object &
$\mathcal P\in \Perv(X_0;\Q)$ &
$[\mathcal P]_{\mathrm{perv}}\in \Ext^1_{\Perv(X_0;\Q)}(Q_\Sigma,IC_{X_0})$ \\
\hline

corrected mixed-Hodge object &
$\mathcal P^H\in MHM(X_0)$, $\rat(\mathcal P^H)\cong \mathcal P$ &
$[\mathcal P^H]\in \Ext^1_{MHM(X_0)}(Q_\Sigma^H,IC^H_{X_0})$ \\
\hline

localized quotient &
$Q_\Sigma:=\bigoplus_{k=1}^r i_{k*}\Q_{\{p_k\}}$ &
$Q_\Sigma\rightsquigarrow \bigoplus_{k=1}^r \Q v_k$ \\
\hline

nodewise coupling line &
$\Ext^1_{\Perv(X_0;\Q)}(i_{k*}\Q_{\{p_k\}},IC_{X_0})\cong \Q$ &
$\Q e_k$ \\
\hline

total coupling space &
$E_\Sigma:=\Ext^1_{\Perv(X_0;\Q)}(Q_\Sigma,IC_{X_0})$ &
$E_\Sigma\cong \bigoplus_{k=1}^r \Q e_k\cong \Q^r$ \\
\hline

global corrected class &
$[\mathcal P]_{\mathrm{perv}}=\sum_{k=1}^r c_k e_k$ &
$c_\Sigma:=(c_1,\dots,c_r)\in \Q^r$ \\
\hline

attachment functors &
$\Phi_k:\mathcal C_{p_k}\to \mathcal C_{\mathrm{bulk}}$, $\Psi_k:\mathcal C_{\mathrm{bulk}}\to \mathcal C_{p_k}$ &
$F_\Sigma:=\{(\Phi_k,\Psi_k)\}_{k=1}^r$ \\
\hline

finite-node schober package &
$S_\Sigma=\bigl(\mathcal C_{\mathrm{bulk}},\{\mathcal C_{p_k}\}_{k=1}^r,\{\Phi_k,\Psi_k\}_{k=1}^r,Sh(S_\Sigma)\bigr)$ &
$S_\Sigma\rightsquigarrow (V_\Sigma,F_\Sigma)$ \\
\hline

\end{tabularx}
\end{table}

\begin{remark}
\label{rem:state-data-dictionary-role}
The table above should be read as a theorem-level extraction rubric. It records how the same finite node-indexed architecture appears in the corrected perverse extension, the mixed-Hodge-module lift, and the finite-node schober package, and how these realizations determine common algebraic state variables. In particular, the node set $\Sigma$ is carried to the finite vertex set $V_\Sigma$, the corrected global perverse class is carried to the coefficient vector $c_\Sigma$, and the attachment functors are carried to the categorical coupling datum $F_\Sigma$.
\end{remark}

\subsection{Definition of the finite-node algebraic state-data package}

We now isolate the finite algebraic package extracted from the dictionary.

\begin{definition}[finite-node algebraic state-data package]
\label{def:algebraic-state-data-package}
The \emph{finite-node algebraic state-data package} attached to the degeneration is the tuple
\begin{equation}\label{eq:def-algebraic-state-data-package}
\mathfrak A_\Sigma:=(V_\Sigma,E_\Sigma,c_\Sigma),
\end{equation}
where
\[
V_\Sigma:=\{v_1,\dots,v_r\},
\qquad
E_\Sigma:=\Ext^1_{\Perv(X_0;\Q)}(Q_\Sigma,IC_{X_0}),
\qquad
c_\Sigma:=(c_1,\dots,c_r)\in \Q^r.
\]
\end{definition}

\begin{remark}
\label{rem:why-no-fSigma-in-A}
At the level of the present paper, the algebraic state-data package $\mathfrak A_\Sigma$ contains only the finite algebraic variables already extracted on the perverse side: the finite vertex set $V_\Sigma$, the nodewise coupling space $E_\Sigma$, and the coefficient vector $c_\Sigma$. The categorical coupling datum
\[
F_\Sigma:=\{(\Phi_k,\Psi_k)\}_{k=1}^r
\]
is recorded in the dictionary because it will be needed in future work, but it is not yet incorporated into $\mathfrak A_\Sigma$ as part of the present state-data package.
\end{remark}

\begin{proposition}
\label{prop:algebraic-state-data-finite}
The algebraic state-data package $\mathfrak A_\Sigma:=(V_\Sigma,E_\Sigma,c_\Sigma)$ is finite in the following sense:
\begin{enumerate}
\item $|V_\Sigma|=r<\infty$;
\item $\dim_\Q E_\Sigma=r$;
\item $c_\Sigma\in\Q^r$.
\end{enumerate}
\end{proposition}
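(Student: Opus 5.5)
The plan is to verify the three items separately, each by direct appeal to the definitions and to results already established above. No new geometric input is needed: the proposition only repackages Theorem~\ref{thm:strong-nodewise-coupling} and Definition~\ref{def:coefficient-vector} in terms of the symbols of Definition~\ref{def:algebraic-state-data-package}.

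For item (1), $V_\Sigma=\{v_1,\dots,v_r\}$ is by construction indexed bijectively by the node set $\Sigma=\{p_1,\dots,p_r\}$, via the dictionary assignment $p_k\rightsquigarrow v_k$. Finiteness of $\Sigma$ is part of Definition~\ref{def:finite-node-conifold-degeneration}. Since the $p_k$ are distinct points, the $v_k$ are distinct, and so $|V_\Sigma|=|\Sigma|=r<\infty$.

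For item (2), I would combine Lemma~\ref{lem:finite-additivity-perverse-ext} with the dimension statement \eqref{eq:local-channel-dimension} of Theorem~\ref{thm:strong-nodewise-coupling}. The lemma gives $E_\Sigma\cong\bigoplus_{k=1}^r\Ext^1_{\Perv(X_0;\Q)}(i_{k*}\Q_{\{p_k\}},IC_{X_0})$, and each summand is one-dimensional over $\Q$. Additivity of dimension over a finite direct sum then gives $\dim_\Q E_\Sigma=r$, and \eqref{eq:ESigma-basis} shows that $\{e_1,\dots,e_r\}$ is a basis.

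The only point needing care is that the decomposition is an honest direct sum of $r$ distinct lines, with no collapsing between nodes. This holds because the summands are indexed by the distinct simple objects $i_{k*}\Q_{\{p_k\}}$. It is already built into Lemma~\ref{lem:finite-additivity-perverse-ext}, so I expect it to be the main, though mild, obstacle, requiring only a citation rather than new argument.

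For item (3), Proposition~\ref{prop:global-class-expansion} gives uniquely determined $c_k\in\Q$ with $[\mathcal P]_{\mathrm{perv}}=\sum_{k=1}^r c_ke_k$. By Definition~\ref{def:coefficient-vector}, $c_\Sigma=(c_1,\dots,c_r)$ is the coordinate vector of this class under the isomorphism $E_\Sigma\cong\Q^r$ determined by the basis $\{e_k\}$. Hence $c_\Sigma\in\Q^r$, and its length agrees with both $|V_\Sigma|$ and $\dim_\Q E_\Sigma$, which closes the argument.
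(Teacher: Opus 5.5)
Your proof is correct and follows essentially the same route as the paper, which derives (1) from the finiteness of $\Sigma$, (2) from Theorem~\ref{thm:strong-nodewise-coupling} (which already packages Lemma~\ref{lem:finite-additivity-perverse-ext} together with the one-dimensionality of each local summand), and (3) directly from Definition~\ref{def:coefficient-vector}. One small point: the ``no collapsing'' concern you flag in item (2) is not actually an obstacle, since a finite direct sum of $r$ one-dimensional $\Q$-vector spaces has dimension $r$ whether or not the indexing objects $i_{k*}\Q_{\{p_k\}}$ are distinct.
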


\begin{proof}
The finiteness of $V_\Sigma$ follows from the finiteness of the node set $\Sigma:=\{p_1,\dots,p_r\}$. The equality $\dim_\Q E_\Sigma=r$ follows from Theorem~\ref{thm:strong-nodewise-coupling}, which identifies
\[
E_\Sigma\cong \bigoplus_{k=1}^r \Q e_k.
\]
Finally, by Definition~\ref{def:coefficient-vector}, the coefficient vector satisfies $c_\Sigma\in\Q^r$.
\end{proof}

\subsection{State-data extraction map}

We now record the full passage from the finite-node geometric and categorical input to the algebraic state-data package.

\begin{definition}[state-data extraction map]
\label{def:state-data-extraction-map}
The \emph{state-data extraction map} is the formal assignment
\begin{equation}\label{eq:state-data-extraction-map}
(\pi:\mathcal X\to\Delta,\mathcal P,\mathcal P^H,S_\Sigma)
\rightsquigarrow
\mathfrak A_\Sigma:=(V_\Sigma,E_\Sigma,c_\Sigma),
\end{equation}
where:
\begin{enumerate}
\item $\pi:\mathcal X\to\Delta$ determines the finite node set $\Sigma:=\{p_1,\dots,p_r\}$;
\item the corrected perverse extension determines the localized quotient $Q_\Sigma$ and the coupling space $E_\Sigma$;
\item the corrected global extension class determines the coefficient vector $c_\Sigma$;
\item the mixed-Hodge-module lift and the finite-node schober package realize the same finite-node architecture compatibly with the perverse-side state data.
\end{enumerate}
\end{definition}

\begin{theorem}[state-data extraction theorem]
\label{thm:state-data-extraction-theorem}
The finite-node conifold degeneration canonically determines the algebraic state-data package
\[
\mathfrak A_\Sigma:=(V_\Sigma,E_\Sigma,c_\Sigma)
\]
through the extraction map \eqref{eq:state-data-extraction-map}. Equivalently, the same finite-node geometry determines a common finite algebraic package visible simultaneously in the corrected perverse extension, its mixed-Hodge-module lift, and its schober realization.
\end{theorem}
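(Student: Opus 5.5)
The plan is to assemble the theorem componentwise from results already proved, checking for each entry of $\mathfrak A_\Sigma=(V_\Sigma,E_\Sigma,c_\Sigma)$ that it depends only on the degeneration and not on auxiliary choices. First, $V_\Sigma$: by Definition~\ref{def:finite-node-conifold-degeneration}, $\pi$ determines $\Sigma=\Sing(X_0)$ intrinsically. Hence $V_\Sigma$ is the finite set in canonical bijection $p_k\mapsto v_k$ with $\Sigma$. Any reordering of the nodes acts compatibly on all later data, so I will phrase canonicity as equivariance under relabeling $\Sigma$.

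Second, $E_\Sigma$. The object $\mathcal P=\Cone(\var_F)[-1]$ is defined functorially from $F=\Q_{\mathcal X}[3]$ (Definition~\ref{def:corrected-finite-node-perverse-object}) and is perverse (Proposition~\ref{prop:corrected-object-is-perverse}). Theorem~\ref{thm:finite-node-corrected-extension} places it in the sequence with quotient $Q_\Sigma$, which depends only on $\Sigma$. Thus $E_\Sigma=\Ext^1(Q_\Sigma,IC_{X_0})$ is determined by $(X_0,\Sigma)$. Lemma~\ref{lem:finite-additivity-perverse-ext} and Theorem~\ref{thm:strong-nodewise-coupling} then give the node-indexed decomposition with distinguished generators $e_k$, and these come from the local ODP extensions, not from a choice of basis.

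Third, $c_\Sigma$. The class $[\mathcal P]_{\mathrm{perv}}\in E_\Sigma$ is the Yoneda class of that sequence. Proposition~\ref{prop:global-class-expansion} and Proposition~\ref{prop:intrinsicity-coefficient-vector} show that its coordinates in $\{e_k\}$ are uniquely determined. This yields the extraction map \eqref{eq:state-data-extraction-map} on the perverse input.

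For the ``equivalently'' clause, I would show that the Hodge and schober inputs feed the same package. On the Hodge side, Theorem~\ref{thm:realization-compatibility} gives $\rat(\mathcal P^H)\cong\mathcal P$ and $\rat(Q^H_\Sigma)\cong Q_\Sigma$. Since $\rat$ is exact, it induces a map on $\Ext^1$ sending $[\mathcal P^H]$ to $[\mathcal P]_{\mathrm{perv}}$ and respecting the nodewise splittings of Lemma~\ref{lem:hodge-finite-additivity} and Lemma~\ref{lem:finite-additivity-perverse-ext}. Proposition~\ref{prop:state-data-shadow} then says the realized package is $\mathfrak A_\Sigma$. On the categorical side, Theorem~\ref{thm:schober-shadow-theorem} gives $Sh(S_\Sigma)\cong\mathcal P$. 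Proposition~\ref{prop:schober-state-data-compatibility} and Proposition~\ref{prop:schober-compatible-with-corrected-extension} show that the sectors $\mathcal C_{p_k}$ are indexed by the same $\Sigma$ and shadow the summands of $Q_\Sigma$. Hence extracting via the shadow recovers the same $V_\Sigma$, $E_\Sigma$ and $c_\Sigma$.

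The main obstacle is that the identifications $\rat(\mathcal P^H)\cong\mathcal P$ and $Sh(S_\Sigma)\cong\mathcal P$ are isomorphisms of objects. The class $c_\Sigma$, however, depends on the extension data, meaning the maps $IC_{X_0}\to\mathcal P\to Q_\Sigma$. An automorphism of $Q_\Sigma$, namely nodewise rescaling by $\Q^\times$, would rescale the $c_k$. I would handle this by insisting that the isomorphisms used are isomorphisms of extensions, inducing the identity on $IC_{X_0}$ and on each $i_{k*}\Q_{\{p_k\}}$ after the canonical normalizations that fix $e_k$. I would then check this normalization against \cite[Definition~5.13]{RahmanMixedHodgeModules}. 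If that normalization is unavailable for the schober side, I would weaken the categorical claim: the schober realization determines $V_\Sigma$, $E_\Sigma$, and $c_\Sigma$ up to the nodewise torus action. I would then note that the canonical $e_k$ pin it down.
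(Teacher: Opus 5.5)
Your proposal is correct and follows the paper's route. The paper's proof likewise assembles $Q_\Sigma$, $E_\Sigma$, $c_\Sigma$ from Sections~\ref{sec:perverse-state-data}--\ref{sec:coefficient-vector} and then invokes Theorem~\ref{thm:realization-compatibility} and Theorem~\ref{thm:schober-shadow-theorem} to see the same node-indexed structure on the Hodge and schober sides. Your caveat that these identifications must be isomorphisms of extensions, not just of objects, is a real point the paper leaves implicit (otherwise $c_\Sigma$ is only defined up to nodewise rescaling by automorphisms of $Q_\Sigma$ and $IC_{X_0}$), but your fallback is off: fixing the $e_k$ cannot remove that ambiguity, and only the extension-level form of the realization and shadow statements, which the paper asserts by citation, does.
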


\begin{proof}
Sections~\ref{sec:perverse-state-data} and \ref{sec:coefficient-vector} construct $Q_\Sigma$, $E_\Sigma$, and $c_\Sigma$ from the corrected perverse extension. Section~\ref{sec:mixed-hodge-state-data} shows that the mixed-Hodge-module extension refines and realizes the same finite-node architecture. Section~\ref{sec:schober-shadow-state-data} shows that the finite-node schober package has one localized sector per node and shadow equal to the corrected perverse object $\mathcal P$. Therefore the same finite node-indexed structure is present in all three realizations, and the extracted algebraic state-data package is canonically attached to the degeneration.
\end{proof}

\begin{remark}
\label{rem:state-data-package-role}
The state-data extraction map is the first algebraic passage attached to the finite-node degeneration. It does not yet impose an incidence relation, arrow structure, stability condition, or wall-crossing formalism. Its role is to isolate the intrinsic finite algebraic variables carried by the degeneration and to place them in a form suitable for the later papers.
\end{remark}

\section{Compatibility and invariance}\label{sec:compatibility-invariance}

The purpose of this section is to verify that the algebraic state-data package extracted in Section~\ref{sec:symbolic-dictionary} is compatible across the three realizations considered in the paper and is intrinsic under the appropriate notion of equivalence. Compatibility means that the same finite-node architecture appears on the perverse, mixed-Hodge, and schober sides, and that the algebraic state variables extracted from these layers reflect that common structure rather than imposing an external one. Invariance means that once the finite-node corrected extension package and its compatible realizations have been fixed up to the correct notion of equivalence, the extracted state-data package is determined intrinsically.

\subsection{Compatibility across realizations}

We first summarize the compatibility statements proved in the preceding sections.

On the perverse side, the corrected finite-node extension
\[
0\to IC_{X_0}\to \mathcal P\to Q_\Sigma\to 0,
\qquad
Q_\Sigma=\bigoplus_{k=1}^r i_{k*}\Q_{\{p_k\}},
\]
determines the nodewise coupling space
\[
E_\Sigma=\Ext^1_{\Perv(X_0;\Q)}(Q_\Sigma,IC_{X_0})
\cong \bigoplus_{k=1}^r \Q e_k
\]
and the coefficient vector
\[
[\mathcal P]_{\mathrm{perv}}=\sum_{k=1}^r c_k e_k,
\qquad
c_\Sigma=(c_1,\dots,c_r)\in\Q^r.
\]
On the mixed-Hodge side, the corrected extension is refined by
\[
0\to IC^H_{X_0}\to \mathcal P^H\to Q_\Sigma^H\to 0,
\qquad
Q_\Sigma^H=\bigoplus_{k=1}^r i_{k*}\Q^H_{\{p_k\}}(-1),
\]
with $\rat(\mathcal P^H)\cong \mathcal P$; see \cite{RahmanMixedHodgeModules}. On the categorical side, the finite-node schober package
\[
S_\Sigma=
\Bigl(
\mathcal C_{\mathrm{bulk}},
\{\mathcal C_{p_k}\}_{k=1}^r,
\{\Phi_k,\Psi_k\}_{k=1}^r,
Sh(S_\Sigma)
\Bigr)
\]
contains one localized sector for each node and satisfies
\[
Sh(S_\Sigma)\cong \mathcal P;
\]
see \cite{RahmanMultiNodeSchoberPaper}. The compatibility problem is therefore to show that the algebraic package
\[
\mathfrak A_\Sigma=(V_\Sigma,E_\Sigma,c_\Sigma)
\]
records this common finite-node architecture faithfully.

\begin{theorem}[compatibility across realizations]
\label{thm:compatibility-across-realizations}
Let $\pi:\mathcal X\to\Delta$ be a finite-node conifold degeneration with node set $\Sigma=\{p_1,\dots,p_r\}$. Let $\mathcal P$ be the corrected finite-node perverse object, let $\mathcal P^H$ be its mixed-Hodge-module lift, and let
\[
S_\Sigma=
\Bigl(
\mathcal C_{\mathrm{bulk}},
\{\mathcal C_{p_k}\}_{k=1}^r,
\{\Phi_k,\Psi_k\}_{k=1}^r,
Sh(S_\Sigma)
\Bigr)
\]
be the associated finite-node schober package. Then the algebraic state-data package
\[
\mathfrak A_\Sigma=(V_\Sigma,E_\Sigma,c_\Sigma)
\]
is compatible across the three realizations in the following sense:
\begin{enumerate}
\item the same finite node set $\Sigma=\{p_1,\dots,p_r\}$ indexes the localized quotient
\[
Q_\Sigma=\bigoplus_{k=1}^r i_{k*}\Q_{\{p_k\}}
\]
on the perverse side, the localized quotient
\[
Q_\Sigma^H=\bigoplus_{k=1}^r i_{k*}\Q^H_{\{p_k\}}(-1)
\]
on the mixed-Hodge side, and the localized family
\[
\mathcal C_\Sigma=\{\mathcal C_{p_k}\}_{k=1}^r
\]
on the schober side;
\item the realization functor identifies the mixed-Hodge-module refinement with the corrected perverse extension,
\[
\rat(\mathcal P^H)\cong \mathcal P,
\qquad
\rat(Q_\Sigma^H)\cong Q_\Sigma;
\]
\item the schober shadow identifies the categorical realization with the same corrected perverse object,
\[
Sh(S_\Sigma)\cong \mathcal P;
\]
\item the extracted algebraic package $\mathfrak A_\Sigma=(V_\Sigma,E_\Sigma,c_\Sigma)$ depends only on this common finite-node architecture.
\end{enumerate}
\end{theorem}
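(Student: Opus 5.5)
The proof is an assembly of results already recorded in Sections~\ref{sec:perverse-state-data}--\ref{sec:symbolic-dictionary}, and I would verify the four items in order.

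\textbf{Item (1).} This is Proposition~\ref{prop:schober-state-data-compatibility}. The perverse indexing comes from Definition~\ref{def:finite-localized-quotient} and the mixed-Hodge indexing from Proposition~\ref{prop:same-node-set-hodge}. The categorical indexing is Theorem~\ref{thm:localized-sector-theorem}, which supplies exactly one sector $\mathcal C_{p_k}$ for each $p_k$ and no identifications between them. I would add one remark: the three indexings are compatible, not merely parallel. The bijection $p_k\mapsto v_k$ in the dictionary is the same bijection on all three levels, because each level's summands are labelled by the point $p_k$ itself.

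\textbf{Items (2) and (3).} These are direct citations. Item (2) is Theorem~\ref{thm:realization-compatibility}, which gives \eqref{eq:rat-PH-is-P} and \eqref{eq:rat-QSigmaH-is-QSigma}, and item (3) is Theorem~\ref{thm:schober-shadow-theorem}. For (2) I would also note the bulk term. Since $\rat$ is exact and faithful and $\rat(IC^H_{X_0})\cong IC_{X_0}$, realization carries the whole short exact sequence \eqref{eq:finite-node-mhm-extension} to \eqref{eq:realized-perverse-extension}, not just its middle and quotient terms. Consequently $\rat$ induces a map
\[
\Ext^1_{MHM(X_0)}(Q_\Sigma^H,IC^H_{X_0})\to E_\Sigma,\qquad [\mathcal P^H]\mapsto[\mathcal P]_{\mathrm{perv}}.
\]
This map respects the nodewise decompositions of Lemmas~\ref{lem:hodge-finite-additivity} and \ref{lem:finite-additivity-perverse-ext}, because $\rat$ is additive and preserves the inclusions of the summands. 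It therefore sends $\epsilon_k^H$ to $c_ke_k$.

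\textbf{Item (4).} This is the substantive point and needs care. I would argue componentwise. $V_\Sigma$ depends only on $\Sigma$, which by (1) is the common index set. $E_\Sigma$ depends only on the pair $(Q_\Sigma,IC_{X_0})$ inside $\Perv(X_0;\Q)$. $c_\Sigma$ depends only on the class $[\mathcal P]_{\mathrm{perv}}$ together with the basis $\{e_k\}$, by Proposition~\ref{prop:intrinsicity-coefficient-vector}.

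It remains to show that the realization-side and schober-side descriptions produce the same perverse extension, hence the same class. For the mixed-Hodge side, the previous paragraph already gives that $\rat$ sends $[\mathcal P^H]$ to $[\mathcal P]_{\mathrm{perv}}$. For the schober side, I would invoke Proposition~\ref{prop:schober-compatible-with-corrected-extension}: under $Sh(S_\Sigma)\cong\mathcal P$, the localized sectors shadow exactly the summands of $Q_\Sigma$. The shadow therefore carries the same extension \eqref{eq:finite-node-corrected-extension}.

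\textbf{Main obstacle.} An abstract isomorphism $Sh(S_\Sigma)\cong\mathcal P$ need not preserve the extension class. A different choice of the maps $IC_{X_0}\to\mathcal P\to Q_\Sigma$ could rescale each $c_k$ by an automorphism of $i_{k*}\Q_{\{p_k\}}$, that is, by an element of $\Q^\times$. My first approach is to use the fact that the identification in \cite[Theorem~5.2]{RahmanMultiNodeSchoberPaper} is built nodewise from local shadows. Those local shadows recover the local corrected ODP extensions, and these are precisely the extensions defining the $e_k$ in Theorem~\ref{thm:strong-nodewise-coupling}. The identification is thus an isomorphism of extensions rather than of bare objects, which pins the class down exactly.

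Failing that, I would state (4) invariantly. Rescaling by automorphisms of the two ends of the extension changes $c_\Sigma$ only by the action of the torus $(\Q^\times)^r$. Any equivalence of extension packages that fixes the distinguished generators $e_k$ then leaves $c_\Sigma$ unchanged, which is the sense of ``depends only on the common architecture''.
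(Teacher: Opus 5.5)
Your proposal follows the paper's proof. Items (1)--(3) are discharged by the same citations: Proposition~\ref{prop:same-node-set-hodge} with Theorem~\ref{thm:localized-sector-theorem}, then Theorem~\ref{thm:realization-compatibility}, then Theorem~\ref{thm:schober-shadow-theorem}. Item (4) is the same componentwise reading of Definition~\ref{def:algebraic-state-data-package}, in which $\mathfrak A_\Sigma$ is built entirely from the perverse extension.

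Your extra discussion of transporting the class through $\rat$ and $Sh$, and of the $(\Q^\times)^r$ rescaling ambiguity, is not needed for (4) as stated. It does correctly pick out the step the paper passes over in Theorem~\ref{thm:invariance-under-equivalence}. The same object-versus-extension caveat also applies to your claim that $\rat$ sends $[\mathcal P^H]$ to $[\mathcal P]_{\mathrm{perv}}$, which you treated as automatic.
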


\begin{proof}
Statement (1) is the content of Proposition~\ref{prop:same-node-set-hodge} and Theorem~\ref{thm:localized-sector-theorem}. Statement (2) is Theorem~\ref{thm:realization-compatibility}. Statement (3) is Theorem~\ref{thm:schober-shadow-theorem}. It remains only to explain (4). By Definition~\ref{def:algebraic-state-data-package},
\[
\mathfrak A_\Sigma=(V_\Sigma,E_\Sigma,c_\Sigma),
\]
where $V_\Sigma$ is the finite vertex set indexed by $\Sigma$, $E_\Sigma$ is the nodewise coupling space extracted from the corrected perverse extension, and $c_\Sigma$ is the coefficient vector of the corrected global class in the distinguished basis of $E_\Sigma$. Each of these objects was constructed from the corrected finite-node extension package and shown to be compatible with the mixed-Hodge and schober realizations in Sections~\ref{sec:mixed-hodge-state-data} and \ref{sec:schober-shadow-state-data}. Therefore $\mathfrak A_\Sigma$ records the same finite-node structure visible on all three levels.
\end{proof}

\begin{remark}
\label{rem:compatibility-not-replacement}
Theorem~\ref{thm:compatibility-across-realizations} should be read as a consistency theorem. The algebraic state-data package does not replace the perverse, mixed-Hodge, or categorical realizations. Rather, it is the common finite algebraic shadow extracted from them. In particular, the package $\mathfrak A_\Sigma$ is not an additional structure imposed on the degeneration; it is the algebraic form of the finite-node architecture already present in those realizations.
\end{remark}

\begin{proposition}[finite-state compatibility of the dictionary]
\label{prop:dictionary-compatible}
The assignments recorded in Table~\ref{tab:state-data-dictionary} are compatible with the corrected finite-node perverse extension, its mixed-Hodge-module refinement, and its schober realization. In particular, the table records a common node-indexed algebraic extraction rather than three unrelated constructions.
\end{proposition}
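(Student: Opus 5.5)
The plan is to go through Table~\ref{tab:state-data-dictionary} row by row. For each row I will check that the algebraic image is obtained from a result already proved, and that the same node index $k$ is used in the perverse, mixed-Hodge and categorical columns. I will group the rows into three blocks: localized rows, bulk and global-class rows, and coupling rows.

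\textbf{Localized block.} The localized rows send $p_k$, $i_{k*}\Q_{\{p_k\}}$, $i_{k*}\Q^H_{\{p_k\}}(-1)$ and $\mathcal C_{p_k}$ all to the same vertex $v_k$. This is consistent because of Proposition~\ref{prop:schober-state-data-compatibility}: the node set $\Sigma$ indexes $Q_\Sigma$, $Q_\Sigma^H$ and $\mathcal C_\Sigma$. Two further facts make the indexing compatible rather than merely bijective:
\begin{enumerate}
\item Theorem~\ref{thm:realization-compatibility} gives $\rat(Q_\Sigma^H)\cong Q_\Sigma$. Since $\rat$ commutes with $i_{k*}$ and with finite direct sums, this isomorphism respects summands: $\rat(i_{k*}\Q^H_{\{p_k\}}(-1))\cong i_{k*}\Q_{\{p_k\}}$.
\item Proposition~\ref{prop:schober-compatible-with-corrected-extension} shows that $\mathcal C_{p_k}$ shadows the $k$-th summand of $Q_\Sigma$.
\end{enumerate}
Together these give the row $Q_\Sigma\rightsquigarrow\bigoplus_k\Q v_k$, summand by summand.

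\textbf{Bulk and global-class block.} The bulk rows require that $\rat(IC^H_{X_0})\cong IC_{X_0}$. This follows from exactness of $\rat$ applied to the sequence in Theorem~\ref{thm:realization-compatibility}. For the class rows, I will use that $\rat$ is an exact functor of abelian categories. It therefore induces a map
\[
\rat:\Ext^1_{MHM(X_0)}(Q_\Sigma^H,IC^H_{X_0})\to E_\Sigma
\]
on Yoneda $\Ext^1$, and this map sends $[\mathcal P^H]$ to $[\mathcal P]_{\mathrm{perv}}$. The map is additive in the first variable, so it is compatible with the decompositions of Lemma~\ref{lem:hodge-finite-additivity} and Lemma~\ref{lem:finite-additivity-perverse-ext}. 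Consequently $\rat(\epsilon^H_k)$ lies in $\Q e_k$, and by Proposition~\ref{prop:global-class-expansion} it equals $c_ke_k$. Hence the $k$-th coordinate of $c_\Sigma$ is read off from the same node on both sides. The rows for the nodewise coupling lines and for $E_\Sigma$ are then exactly Theorem~\ref{thm:strong-nodewise-coupling}.

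\textbf{Coupling block.} The final two rows, $F_\Sigma$ and $S_\Sigma\rightsquigarrow(V_\Sigma,F_\Sigma)$, only need indexing compatibility. Proposition~\ref{prop:finite-categorical-sector-architecture} attaches $(\Phi_k,\Psi_k)$ to $\mathcal C_{p_k}$, and the localized block attaches $\mathcal C_{p_k}$ to $v_k$. By Remark~\ref{rem:why-no-fSigma-in-A}, these rows are not part of $\mathfrak A_\Sigma$, so nothing beyond indexing is required.

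The main obstacle is the global-class step. The paper records $\rat(\mathcal P^H)\cong\mathcal P$ only as an isomorphism of objects. To transport the extension class, I need that realization carries the whole short exact sequence to the corrected sequence, with the given sub and quotient identifications. I will take this from the statement of Theorem~\ref{thm:realization-compatibility}, which asserts that the sequence itself is sent to the sequence. I will then argue that compatibility of the isomorphisms on $IC$ and $Q_\Sigma$ with the direct-sum structure follows from the naturality of the additivity isomorphisms.
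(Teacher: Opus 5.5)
Your proposal is correct and has the same skeleton as the paper's proof. Each row of the table is justified by the results of Sections~\ref{sec:perverse-state-data}--\ref{sec:schober-shadow-state-data}, and compatibility comes from common indexing by $\Sigma$ together with the realization and shadow identifications.

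The paper groups the rows by realization and stops at that indexing-level assertion. You group them by node and add an argument the paper never gives. The exact functor $\rat$ induces a map on Yoneda $\Ext^1$ that is additive in the first variable. Given an isomorphism of short exact sequences, rather than just $\rat(\mathcal P^H)\cong\mathcal P$, this map carries $[\mathcal P^H]$ to $[\mathcal P]_{\mathrm{perv}}$ and each $\epsilon_k^H$ to $c_ke_k$. This gives genuine class-level compatibility between the mixed-Hodge row and the coefficient row, which the paper leaves implicit. The price is that you must track identifications.

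Two justifications need fixing.

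\begin{enumerate}
\item The relevant isomorphism $\rat(Q_\Sigma^H)\cong Q_\Sigma$, the one in the isomorphism of sequences, does respect summands. But this does not follow from naturality of the additivity isomorphisms, nor from the existence of some summandwise isomorphism. It follows from $\Hom_{\Perv(X_0;\Q)}(i_{k*}\Q_{\{p_k\}},i_{l*}\Q_{\{p_l\}})=0$ for $k\neq l$, because the supports are disjoint. Hence any such isomorphism is diagonal with nonzero scalars $\lambda_k$. So $\rat(\epsilon_k^H)=c_ke_k$ holds for the summandwise identification induced by the isomorphism of sequences; any other choice rescales the coordinates.
\item $\rat(IC^H_{X_0})\cong IC_{X_0}$ does not follow from exactness of $\rat$ alone. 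It is part of the assertion that the sequence is sent to the sequence, or it is standard because $\rat$ commutes with intermediate extension.
\end{enumerate}
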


\begin{proof}
Each row of Table~\ref{tab:state-data-dictionary} is justified by the preceding sections. The rows involving $Q_\Sigma$, $E_\Sigma$, and $c_\Sigma$ are proved in Sections~\ref{sec:perverse-state-data} and \ref{sec:coefficient-vector}. The rows involving $Q_\Sigma^H$ and $\mathcal P^H$ are justified by Section~\ref{sec:mixed-hodge-state-data}, especially Theorem~\ref{thm:realization-compatibility}. The rows involving $\mathcal C_{p_k}$, $\Phi_k$, $\Psi_k$, and $S_\Sigma$ are justified by Section~\ref{sec:schober-shadow-state-data}, especially Theorem~\ref{thm:finite-node-schober-datum} and Theorem~\ref{thm:schober-shadow-theorem}. Since these rows are all indexed by the same node set $\Sigma$ and are linked through realization and shadow identifications, the resulting dictionary is compatible across the three realizations.
\end{proof}

\subsection{Invariance under equivalence}

We next isolate the sense in which the extracted algebraic state data are intrinsic. The relevant notion of equivalence comes from the finite-node schober formalism of \cite{RahmanMultiNodeSchoberPaper}, where equivalence is defined in terms of equivalences of the bulk categories, equivalences of the local node sectors, compatibility of the attachment functors up to natural isomorphism, and agreement of shadows. The rigidity results in that paper imply that once the local ODP blocks, the chosen bulk category, and the corrected perverse shadow are fixed up to equivalence, the global finite-node schober package is unique up to equivalence. This is the correct invariance framework for the present paper.

\begin{definition}[equivalent finite-node realizations]
\label{def:equivalent-finite-node-realizations}
Let
\[
S_\Sigma=
\Bigl(
\mathcal C_{\mathrm{bulk}},
\{\mathcal C_{p_k}\}_{k=1}^r,
\{\Phi_k,\Psi_k\}_{k=1}^r,
Sh(S_\Sigma)
\Bigr)
\]
and
\[
T_\Sigma=
\Bigl(
\mathcal D_{\mathrm{bulk}},
\{\mathcal D_{p_k}\}_{k=1}^r,
\{\Phi_k',\Psi_k'\}_{k=1}^r,
Sh(T_\Sigma)
\Bigr)
\]
be finite-node schober packages in the sense of \cite{RahmanMultiNodeSchoberPaper}. We say that $S_\Sigma$ and $T_\Sigma$ are \emph{equivalent finite-node realizations} if they are equivalent as finite-node schober data in the sense of Definition~3.5 of \cite{RahmanMultiNodeSchoberPaper}.
\end{definition}

\begin{remark}
\label{rem:equivalence-data}
Concretely, the equivalence of Definition~\ref{def:equivalent-finite-node-realizations} requires: equivalence of the bulk categories, equivalence of the corresponding local ODP schober data at each node, compatibility of the global attachment functors up to natural isomorphism, and agreement of the corrected finite-node perverse shadows. See \cite[Definition~3.5, Lemma~6.2, and Proposition~6.3]{RahmanMultiNodeSchoberPaper}.
\end{remark}

The rigidity results of \cite{RahmanMultiNodeSchoberPaper} now imply invariance of the extracted algebraic package.

\begin{theorem}[invariance under equivalence]
\label{thm:invariance-under-equivalence}
Let $S_\Sigma$ and $T_\Sigma$ be equivalent finite-node realizations in the sense of Definition~\ref{def:equivalent-finite-node-realizations}. Then they determine the same algebraic state-data package
\[
\mathfrak A_\Sigma=(V_\Sigma,E_\Sigma,c_\Sigma)
\]
up to the canonical identification induced by the common node set $\Sigma$.
\end{theorem}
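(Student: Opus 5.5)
The plan is to show that each component of $\mathfrak A_\Sigma=(V_\Sigma,E_\Sigma,c_\Sigma)$ depends only on data preserved by an equivalence in the sense of Definition~\ref{def:equivalent-finite-node-realizations}. Nothing about the internal structure of $\mathcal C_{\mathrm{bulk}}$ or of the functors $\Phi_k,\Psi_k$ should matter. The key observation is that $\mathfrak A_\Sigma$ is built entirely from the node set $\Sigma$ and from the corrected perverse extension $0\to IC_{X_0}\to \mathcal P\to Q_\Sigma\to 0$ in $\Perv(X_0;\Q)$. By Remark~\ref{rem:equivalence-data}, equivalence includes agreement of the shadows $Sh(S_\Sigma)\cong Sh(T_\Sigma)$, and by Theorem~\ref{thm:schober-shadow-theorem} both shadows are identified with $\mathcal P$. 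So the argument is a reduction to the perverse side.

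\textbf{Step 1: the vertex set.} An equivalence $S_\Sigma\simeq T_\Sigma$ matches the local sectors $\mathcal C_{p_k}\simeq \mathcal D_{p_k}$ nodewise, by equivalence of the local ODP schober data at each node. By Theorem~\ref{thm:localized-sector-theorem}, each package has exactly one sector per node. Hence both families are indexed by the same $\Sigma$, and the vertex assignment $p_k\mapsto v_k$ coincides. This gives the canonical identification of $V_\Sigma$.

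\textbf{Step 2: the coupling space.} $E_\Sigma=\Ext^1_{\Perv(X_0;\Q)}(Q_\Sigma,IC_{X_0})$ depends only on $X_0$ and $\Sigma$, not on the schober data, so it is literally the same space for both realizations. The distinguished generators $e_k$ are defined by the local corrected ODP extensions (Theorem~\ref{thm:strong-nodewise-coupling}). Those extensions are the shadows of the local ODP blocks, and they are matched by the nodewise equivalence. Hence the bases $\{e_k\}$ agree, with no rescaling.

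\textbf{Step 3: the coefficient vector.} Here lies the main obstacle. Agreement of shadows as objects, $Sh(S_\Sigma)\cong Sh(T_\Sigma)\cong\mathcal P$, does not by itself fix the extension class. A class in $\Ext^1$ is only determined up to the automorphisms of $IC_{X_0}$ and $Q_\Sigma$ induced by the isomorphism. To close this gap, I would use that the equivalence data in \cite[Definition~3.5, Lemma~6.2, Proposition~6.3]{RahmanMultiNodeSchoberPaper} identify the shadows compatibly with their bulk/localized filtrations. The shadow of the bulk is $IC_{X_0}$ and the shadow of $\mathcal C_{p_k}$ is $i_{k*}\Q_{\{p_k\}}$, so the isomorphism of shadows is one of extensions. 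It therefore induces the identity on $IC_{X_0}$ and on each summand of $Q_\Sigma$.

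If only an isomorphism inducing scalars $\lambda\in\Q^\times$ on $IC_{X_0}$ and $\mu_k\in\Q^\times$ on each $i_{k*}\Q_{\{p_k\}}$ were available, I would fall back on normalisation. Here $\mathrm{End}(IC_{X_0})=\Q$, assuming $X_0$ is irreducible, and the endomorphisms of the point objects are also scalars. I would then show the scalars are forced to be $1$ by the fixed local ODP blocks used to define $e_k$. Under the resulting identification, $[\mathcal P]_{\mathrm{perv}}$ is the same element of $E_\Sigma$ for both realizations. Uniqueness of the expansion in Proposition~\ref{prop:global-class-expansion} then gives equality of $c_\Sigma$.
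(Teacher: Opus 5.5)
Your Steps~1--2 and the closing sentence of Step~3 are the paper's proof. The ingredients are the same: shadow agreement $Sh(S_\Sigma)\cong Sh(T_\Sigma)\cong\mathcal P$, the fact that $E_\Sigma$ depends only on $(X_0,\Sigma)$, the node-indexed basis of Theorem~\ref{thm:strong-nodewise-coupling}, and uniqueness of the expansion (Propositions~\ref{prop:global-class-expansion} and~\ref{prop:intrinsicity-coefficient-vector}). The paper passes from shadow agreement to ``the same corrected extension package'' in one line, and in its setup this is legitimate. $[\mathcal P]_{\mathrm{perv}}$ is the class of the fixed extension of Theorem~\ref{thm:finite-node-corrected-extension} attached to $\pi$, and $e_k$ comes from the geometric local ODP extension. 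So neither $c_\Sigma$ nor the basis is ever read off from $Sh(S_\Sigma)$ or $Sh(T_\Sigma)$, and nothing needs to be transported along a shadow isomorphism. You can therefore drop the transport arguments in Steps~2 and~3.

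Your Step~3 worry is real for a stronger reading, in which each realization's coefficient vector is extracted from its own shadow with its induced bulk/localized filtration. In that reading your patch does not close the gap as written:
\begin{itemize}
\item A filtration-compatible isomorphism $Sh(S_\Sigma)\to Sh(T_\Sigma)$ induces scalars $\lambda$ on $IC_{X_0}$ and $\mu_k$ on $i_{k*}\Q_{\{p_k\}}$, not identities. Writing $c'$ for the vector read off from $Sh(T_\Sigma)$, the morphism of extensions gives $\lambda c_k=\mu_k c'_k$, i.e.\ $c'_k=(\lambda/\mu_k)c_k$.
\item Forcing each scalar to be $1$ is the wrong target. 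Composing with multiplication by any $t\in\Q^\times$ on the shadow gives another filtered isomorphism, with scalars $t\lambda$ and $t\mu_k$. What is needed, and what suffices, is $\lambda=\mu_k$ for every $k$ with $c_k\neq 0$.
\item Nothing in the paper supplies this normalization. It would have to be an explicit property of the equivalence in \cite[Definition~3.5]{RahmanMultiNodeSchoberPaper}, so in that reading the step is asserted rather than proved.
\end{itemize}
The same caveat applies to your ``no rescaling'' claim for the $e_k$ in Step~2.
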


\begin{proof}
Since $S_\Sigma$ and $T_\Sigma$ are equivalent finite-node schober data, Lemma~6.2 and Proposition~6.3 of \cite{RahmanMultiNodeSchoberPaper} imply that they have equivalent bulk categories, equivalent localized node sectors, equivalent attachment patterns, and the same corrected finite-node perverse shadow. By the shadow agreement,
\[
Sh(S_\Sigma)\cong Sh(T_\Sigma)\cong \mathcal P,
\]
both realizations determine the same corrected finite-node perverse extension package. Hence both determine the same localized quotient
\[
Q_\Sigma=\bigoplus_{k=1}^r i_{k*}\Q_{\{p_k\}},
\]
the same nodewise coupling space
\[
E_\Sigma=\Ext^1_{\Perv(X_0;\Q)}(Q_\Sigma,IC_{X_0}),
\]
and the same corrected global perverse extension class
\[
[\mathcal P]_{\mathrm{perv}}\in E_\Sigma.
\]
By Theorem~\ref{thm:strong-nodewise-coupling}, the distinguished basis $\{e_1,\dots,e_r\}$ of $E_\Sigma$ is indexed canonically by the node set $\Sigma$. By Proposition~\ref{prop:intrinsicity-coefficient-vector}, the coefficient vector
\[
c_\Sigma=(c_1,\dots,c_r)
\]
is uniquely determined by the corrected finite-node extension package together with that distinguished basis. Therefore the extracted algebraic state-data package
\[
\mathfrak A_\Sigma=(V_\Sigma,E_\Sigma,c_\Sigma)
\]
is the same for $S_\Sigma$ and $T_\Sigma$ up to the canonical identification induced by $\Sigma$.
\end{proof}

\begin{corollary}[intrinsicity of the algebraic state-data package]
\label{cor:intrinsicity-of-state-data}
The algebraic state-data package
\[
\mathfrak A_\Sigma=(V_\Sigma,E_\Sigma,c_\Sigma)
\]
is intrinsic to the finite-node corrected extension package together with its compatible mixed-Hodge and schober realizations.
\end{corollary}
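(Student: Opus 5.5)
The plan is to derive the corollary directly from Theorem~\ref{thm:compatibility-across-realizations} and Theorem~\ref{thm:invariance-under-equivalence}, checking each component of $\mathfrak A_\Sigma=(V_\Sigma,E_\Sigma,c_\Sigma)$ separately. \emph{Intrinsic} will mean: the package is determined by the corrected finite-node extension package $0\to IC_{X_0}\to\mathcal P\to Q_\Sigma\to 0$, and it does not change when the mixed-Hodge lift or the schober realization is replaced by a compatible or equivalent one.

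First, $V_\Sigma$. It is defined as $\{v_1,\dots,v_r\}$ indexed by $\Sigma=\Sing(X_0)$, so it depends only on the geometry of $X_0$. By Proposition~\ref{prop:schober-state-data-compatibility}, the same index set labels $Q_\Sigma$, $Q_\Sigma^H$ and $\mathcal C_\Sigma$. Hence none of the three realizations can change it.

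Second, $E_\Sigma$ and its basis. The space $E_\Sigma=\Ext^1_{\Perv(X_0;\Q)}(Q_\Sigma,IC_{X_0})$ depends only on the two outer terms of the corrected extension. The basis $\{e_k\}$ is fixed by the local corrected ODP extensions (Theorem~\ref{thm:strong-nodewise-coupling}), and it splits along the summands via Lemma~\ref{lem:finite-additivity-perverse-ext}. Compatibility with the other realizations follows from two facts. Realization is exact and $\rat(Q_\Sigma^H)\cong Q_\Sigma$, $\rat(IC^H_{X_0})\cong IC_{X_0}$ (Theorem~\ref{thm:realization-compatibility}), so the Hodge side yields the same pair of outer terms. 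The shadow identification $Sh(S_\Sigma)\cong\mathcal P$ (Theorem~\ref{thm:schober-shadow-theorem}) does the same on the categorical side.

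Third, $c_\Sigma$. By Proposition~\ref{prop:intrinsicity-coefficient-vector}, it is determined by $[\mathcal P]_{\mathrm{perv}}$ together with the basis $\{e_k\}$. On the Hodge side I would note that the map $\rat:\Ext^1_{MHM}(Q^H_\Sigma,IC^H_{X_0})\to E_\Sigma$ sends $[\mathcal P^H]$ to $[\mathcal P]_{\mathrm{perv}}$, because realization is exact and preserves the extension sequence. It also respects the nodewise decompositions of Lemma~\ref{lem:hodge-finite-additivity} and Lemma~\ref{lem:finite-additivity-perverse-ext}. So the tuple $(\epsilon^H_k)$ of Proposition~\ref{prop:global-hodge-extension-class} realizes to $(c_ke_k)$. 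On the schober side, Theorem~\ref{thm:invariance-under-equivalence} says that any equivalent realization yields the same $\mathfrak A_\Sigma$, up to the canonical identification by $\Sigma$.

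The main obstacle is the third step. The shadow isomorphism $Sh(S_\Sigma)\cong\mathcal P$ is, as stated, an isomorphism of objects only, and an abstract automorphism of the outer terms could rescale the coordinates $c_k$. To rule this out, I would argue that the identification preserves the subobject $IC_{X_0}$ and the quotient $Q_\Sigma$. Note that $\Hom(IC_{X_0},IC_{X_0})=\Q$ and $\Hom(i_{k*}\Q,i_{k*}\Q)=\Q$, so the only ambiguity is a rescaling by nonzero scalars. I would then fix the normalization using the local ODP blocks, via Proposition~\ref{prop:schober-compatible-with-corrected-extension}, so that the $e_k$ and hence the $c_k$ are pinned down canonically.
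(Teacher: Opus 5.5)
Your plan follows the paper's proof, which is simply the combination of Theorem~\ref{thm:compatibility-across-realizations} and Theorem~\ref{thm:invariance-under-equivalence}; your component-by-component checks of $V_\Sigma$, $E_\Sigma$ and $c_\Sigma$ unpack those two results. You can drop the final normalization step: the rescaling worry is a fair point about object-level isomorphisms $Sh(S_\Sigma)\cong\mathcal P$, which fix each $c_k$ only up to a nonzero scalar coming from rescalings of the outer terms, but it does not arise here, because $c_\Sigma$ is defined from the canonical sequence of $\mathcal P$ and the perverse-side generators $e_k$ rather than read off from $Sh(S_\Sigma)$ (and Proposition~\ref{prop:schober-compatible-with-corrected-extension} only matches sectors, so it would not supply the scalar normalization you invoke anyway).
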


\begin{proof}
This follows immediately from Theorem~\ref{thm:compatibility-across-realizations} and Theorem~\ref{thm:invariance-under-equivalence}.
\end{proof}

\begin{remark}
\label{rem:intrinsicity-sense}
Corollary~\ref{cor:intrinsicity-of-state-data} should be interpreted in the precise sense used throughout this paper. Intrinsicity here means invariance under the correct notion of equivalence of the finite-node corrected extension package and its compatible realizations. It does not mean that the algebraic state-data package is defined independently of those realizations; rather, it means that once the finite-node architecture has been fixed up to equivalence, the extracted state variables are determined.
\end{remark}

\subsection{Consequences for future work}

The package
\[
\mathfrak A_\Sigma=(V_\Sigma,E_\Sigma,c_\Sigma)
\]
is the first algebraic layer in the passage from finite-node conifold geometry to later algebraic and physical structures. In future work, the categorical coupling datum
\[
F_\Sigma=\{(\Phi_k,\Psi_k)\}_{k=1}^r
\]
will be combined with $\mathfrak A_\Sigma$ to construct the incidence and quiver-type structures attached to the same finite-node architecture. Subsequent papers will then impose stability data, derive BPS spectral quantities, and study wall-crossing and applications. The role of the present paper is therefore foundational: it isolates the intrinsic finite algebraic state variables on which those later developments depend.

\section{Examples}

The purpose of this section is to illustrate the state-data extraction formalism in the simplest finite-node cases. In each example, the input is a finite-node conifold degeneration
\[
\pi:\mathcal X\to\Delta
\]
with node set $\Sigma=\{p_1,\dots,p_r\}$, corrected finite-node perverse object $\mathcal P$, mixed-Hodge-module lift $\mathcal P^H$, and finite-node schober package
\[
S_\Sigma=
\Bigl(
\mathcal C_{\mathrm{bulk}},
\{\mathcal C_{p_k}\}_{k=1}^r,
\{\Phi_k,\Psi_k\}_{k=1}^r,
Sh(S_\Sigma)
\Bigr).
\]
The output is the algebraic state-data package
\[
\mathfrak A_\Sigma=(V_\Sigma,E_\Sigma,c_\Sigma)
\]
of Definition~\ref{def:algebraic-state-data-package}. These examples should be read as explicit instances of the extraction map of Definition~\ref{def:state-data-extraction-map}.

\subsection{Single-node degeneration}

We begin with the case $r=1$. Let the central fiber $X_0$ have a single ordinary double point
\[
\Sigma=\{p\}.
\]
Then the corrected perverse extension takes the form
\begin{equation}\label{eq:example-single-node-extension}
0\to IC_{X_0}\to \mathcal P\to i_*\Q_{\{p\}}\to 0,
\end{equation}
where $i:\{p\}\hookrightarrow X_0$ is the closed inclusion of the unique node; see \cite{RahmanPerverseNearbyCycles}. The localized quotient is therefore
\[
Q_\Sigma=i_*\Q_{\{p\}},
\]
and the nodewise coupling space is
\[
E_\Sigma=\Ext^1_{\Perv(X_0;\Q)}(i_*\Q_{\{p\}},IC_{X_0}).
\]
In the ordinary double point case this space is one-dimensional, and the corrected local perverse ODP extension defines a distinguished generator
\[
e\in \Ext^1_{\Perv(X_0;\Q)}(i_*\Q_{\{p\}},IC_{X_0});
\]
see \cite{RahmanPerverseNearbyCycles,RahmanMixedHodgeModules}. Thus
\begin{equation}\label{eq:example-single-node-coupling}
E_\Sigma\cong \Q e.
\end{equation}

The corrected global class is therefore of the form
\begin{equation}\label{eq:example-single-node-global-class}
[\mathcal P]_{\mathrm{perv}}=c\,e
\end{equation}
for a unique coefficient $c\in\Q$. Equivalently, the coefficient vector is the one-dimensional vector
\begin{equation}\label{eq:example-single-node-coefficient}
c_\Sigma=(c)\in\Q.
\end{equation}

On the mixed-Hodge side, the corrected extension lifts to
\begin{equation}\label{eq:example-single-node-hodge-extension}
0\to IC^H_{X_0}\to \mathcal P^H\to i_*\Q^H_{\{p\}}(-1)\to 0,
\end{equation}
with $\rat(\mathcal P^H)\cong \mathcal P$; see \cite{RahmanMixedHodgeModules}. On the categorical side, the finite-node schober package reduces to a single localized category
\[
S_\Sigma=
\Bigl(
\mathcal C_{\mathrm{bulk}},
\mathcal C_p,
\Phi,\Psi,
Sh(S_\Sigma)
\Bigr),
\qquad
Sh(S_\Sigma)\cong \mathcal P;
\]
see \cite{RahmanMultiNodeSchoberPaper}. Thus the algebraic state-data package is
\[
V_\Sigma=\{v\},
\qquad
E_\Sigma\cong \Q e,
\qquad
c_\Sigma=(c),
\]
so that
\begin{equation}\label{eq:example-single-node-package}
\mathfrak A_\Sigma=(\{v\},\Q e,(c)).
\end{equation}

This is the smallest nontrivial example of the state-data formalism. It already exhibits the full structural pattern of the theory: one localized rank-one sector, one distinguished coupling line, one global corrected extension class, one coefficient, and one localized categorical sector refining the same finite-node correction.

\begin{proposition}\label{prop:single-node-example}
Let $\pi:\mathcal X\to\Delta$ be a finite-node conifold degeneration with a single node $\Sigma=\{p\}$. Then the associated algebraic state-data package is
\[
\mathfrak A_\Sigma=(\{v\},\Q e,(c)),
\]
where $e$ is the distinguished generator of
\[
\Ext^1_{\Perv(X_0;\Q)}(i_*\Q_{\{p\}},IC_{X_0})
\]
and $c\in\Q$ is the unique coefficient defined by
\[
[\mathcal P]_{\mathrm{perv}}=c\,e.
\]
Moreover, this package is compatible with the mixed-Hodge lift
\[
0\to IC^H_{X_0}\to \mathcal P^H\to i_*\Q^H_{\{p\}}(-1)\to 0
\]
and with the single-node schober realization
\[
S_\Sigma=
\bigl(
\mathcal C_{\mathrm{bulk}},
\mathcal C_p,
\Phi,\Psi,
Sh(S_\Sigma)
\bigr),
\qquad
Sh(S_\Sigma)\cong \mathcal P.
\]
\end{proposition}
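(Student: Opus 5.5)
The plan is to specialize the general results of Sections~\ref{sec:perverse-state-data}--\ref{sec:compatibility-invariance} to the case $r=1$, so that nothing new has to be constructed. First I will set $\Sigma=\{p\}$, write $i=i_1$, $v=v_1$, $e=e_1$. By Theorem~\ref{thm:finite-node-corrected-extension} with $r=1$, the corrected extension is exactly \eqref{eq:example-single-node-extension}, so $Q_\Sigma=i_*\Q_{\{p\}}$. Definition~\ref{def:nodewise-coupling-space} then gives $E_\Sigma=\Ext^1_{\Perv(X_0;\Q)}(i_*\Q_{\{p\}},IC_{X_0})$. In this case Lemma~\ref{lem:finite-additivity-perverse-ext} is tautological, and Theorem~\ref{thm:strong-nodewise-coupling} gives that this space is one-dimensional with distinguished generator $e$, so $E_\Sigma\cong\Q e$.

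Next I will apply Proposition~\ref{prop:global-class-expansion} to the class $[\mathcal P]_{\mathrm{perv}}\in E_\Sigma$. This yields a unique $c\in\Q$ with $[\mathcal P]_{\mathrm{perv}}=c\,e$, and Definition~\ref{def:coefficient-vector} gives $c_\Sigma=(c)$. Definition~\ref{def:algebraic-state-data-package} with $V_\Sigma=\{v\}$ then assembles $\mathfrak A_\Sigma=(\{v\},\Q e,(c))$. Proposition~\ref{prop:intrinsicity-coefficient-vector} guarantees that $c$ involves no choices beyond the distinguished generator $e$.

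For the compatibility claims, I will specialize Theorem~\ref{thm:finite-node-mhm-lift} and Theorem~\ref{thm:realization-compatibility} to $r=1$. This produces the Hodge extension \eqref{eq:example-single-node-hodge-extension} with $\rat(\mathcal P^H)\cong\mathcal P$ and $\rat(i_*\Q^H_{\{p\}}(-1))\cong i_*\Q_{\{p\}}$. Proposition~\ref{prop:state-data-shadow} then says the package above is the realized shadow of $(IC^H_{X_0},i_*\Q^H_{\{p\}}(-1),[\mathcal P^H])$. On the categorical side, Theorems~\ref{thm:finite-node-schober-datum}, \ref{thm:localized-sector-theorem} and \ref{thm:schober-shadow-theorem} with $r=1$ give a datum with the single localized sector $\mathcal C_p$, the functors $\Phi=\Phi_1$ and $\Psi=\Psi_1$, and $Sh(S_\Sigma)\cong\mathcal P$. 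Proposition~\ref{prop:schober-compatible-with-corrected-extension} identifies $\mathcal C_p$ with the summand $i_*\Q_{\{p\}}$. Finally, Theorem~\ref{thm:compatibility-across-realizations} with $r=1$ packages all three realizations.

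The only step I expect to need care is confirming that the cited results really hold for $r=1$ and do not tacitly assume $r\ge2$. This matters in particular for the schober assembly, where the disjointness of node neighbourhoods is vacuous. My approach is to check that each cited statement is uniform in $r\ge1$, and where the global assembly degenerates, to fall back on the single-node results of \cite{RahmanSchoberPaper,RahmanPerverseNearbyCycles}. Beyond that, the proof is bookkeeping.
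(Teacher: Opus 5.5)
Your proposal is correct and follows essentially the same route as the paper. The paper's proof simply reads the single-node case off the general $r$-node results, citing the single-node paper for the extension \eqref{eq:example-single-node-extension} exactly as in your fallback, and then invokes Theorem~\ref{thm:realization-compatibility} and Theorem~\ref{thm:schober-shadow-theorem}. One minor wording point: Proposition~\ref{prop:schober-compatible-with-corrected-extension} only says that $\mathcal C_p$ shadows (refines) the summand $i_*\Q_{\{p\}}$, not that it is identified with it.
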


\begin{proof}
All assertions follow from \eqref{eq:example-single-node-extension}--\eqref{eq:example-single-node-package} together with Theorem~\ref{thm:realization-compatibility} and Theorem~\ref{thm:schober-shadow-theorem}.
\end{proof}

\subsection{Two-node degeneration}

We now pass to the first genuinely finite multi-node case. Let
\[
\Sigma=\{p_1,p_2\}
\]
be the node set of the central fiber. Then the corrected finite-node perverse extension is
\begin{equation}\label{eq:example-two-node-extension}
0\to IC_{X_0}\to \mathcal P\to i_{1*}\Q_{\{p_1\}}\oplus i_{2*}\Q_{\{p_2\}}\to 0,
\end{equation}
where $i_k:\{p_k\}\hookrightarrow X_0$ are the closed node inclusions; see \cite{RahmanPerverseNearbyCycles,RahmanMixedHodgeModules}. The localized quotient is
\[
Q_\Sigma=i_{1*}\Q_{\{p_1\}}\oplus i_{2*}\Q_{\{p_2\}}.
\]

By finite additivity of the extension space,
\[
E_\Sigma=\Ext^1_{\Perv(X_0;\Q)}(Q_\Sigma,IC_{X_0})
\cong
\Ext^1_{\Perv(X_0;\Q)}(i_{1*}\Q_{\{p_1\}},IC_{X_0})
\oplus
\Ext^1_{\Perv(X_0;\Q)}(i_{2*}\Q_{\{p_2\}},IC_{X_0});
\]
see \cite[Lemma~5.4]{RahmanMixedHodgeModules}. Each summand is one-dimensional, with distinguished generator
\[
e_1\in \Ext^1_{\Perv(X_0;\Q)}(i_{1*}\Q_{\{p_1\}},IC_{X_0}),
\qquad
e_2\in \Ext^1_{\Perv(X_0;\Q)}(i_{2*}\Q_{\{p_2\}},IC_{X_0}),
\]
so that
\begin{equation}\label{eq:example-two-node-coupling}
E_\Sigma\cong \Q e_1\oplus \Q e_2\cong \Q^2.
\end{equation}

The corrected global class therefore has a unique nodewise expansion
\begin{equation}\label{eq:example-two-node-global-class}
[\mathcal P]_{\mathrm{perv}}=c_1e_1+c_2e_2,
\qquad
c_1,c_2\in\Q.
\end{equation}
Accordingly, the coefficient vector is
\begin{equation}\label{eq:example-two-node-coefficient}
c_\Sigma=(c_1,c_2)\in\Q^2.
\end{equation}

On the mixed-Hodge side, the corresponding extension is
\begin{equation}\label{eq:example-two-node-hodge-extension}
0\to IC^H_{X_0}\to \mathcal P^H\to i_{1*}\Q^H_{\{p_1\}}(-1)\oplus i_{2*}\Q^H_{\{p_2\}}(-1)\to 0,
\end{equation}
with realization equal to the perverse extension above; see \cite{RahmanMixedHodgeModules}. On the categorical side, the finite-node schober package has the form
\[
S_\Sigma=
\Bigl(
\mathcal C_{\mathrm{bulk}},
\{\mathcal C_{p_1},\mathcal C_{p_2}\},
\{(\Phi_1,\Psi_1),(\Phi_2,\Psi_2)\},
Sh(S_\Sigma)
\Bigr),
\qquad
Sh(S_\Sigma)\cong \mathcal P;
\]
see \cite{RahmanMultiNodeSchoberPaper}. The localized categorical family is therefore
\[
\mathcal C_\Sigma=\{\mathcal C_{p_1},\mathcal C_{p_2}\},
\]
and the corresponding finite vertex set is
\[
V_\Sigma=\{v_1,v_2\}.
\]

Thus the two-node algebraic state-data package is
\begin{equation}\label{eq:example-two-node-package}
\mathfrak A_\Sigma=(V_\Sigma,E_\Sigma,c_\Sigma)
=
\bigl(
\{v_1,v_2\},
\Q e_1\oplus \Q e_2,
(c_1,c_2)
\bigr).
\end{equation}

This example is the first case in which the global corrected class carries genuinely multi-node information. The localized quotient records two distinct singular sectors, the coupling space carries two distinguished nodewise channels, and the coefficient vector measures how the global corrected extension is assembled from those two channels. The mixed-Hodge-module and schober realizations show that the same two-node architecture is present at the Hodge and categorical levels as well.

\begin{proposition}\label{prop:two-node-example}
Let $\pi:\mathcal X\to\Delta$ be a finite-node conifold degeneration with node set $\Sigma=\{p_1,p_2\}$. Then the associated algebraic state-data package is
\[
\mathfrak A_\Sigma=
\bigl(
\{v_1,v_2\},
\Q e_1\oplus \Q e_2,
(c_1,c_2)
\bigr),
\]
where $e_1$ and $e_2$ are the distinguished generators of the one-dimensional local coupling spaces and $(c_1,c_2)\in\Q^2$ is the unique coefficient vector defined by
\[
[\mathcal P]_{\mathrm{perv}}=c_1e_1+c_2e_2.
\]
Moreover, this package is compatible with the mixed-Hodge lift
\[
0\to IC^H_{X_0}\to \mathcal P^H\to i_{1*}\Q^H_{\{p_1\}}(-1)\oplus i_{2*}\Q^H_{\{p_2\}}(-1)\to 0
\]
and with the two-node schober realization
\[
S_\Sigma=
\Bigl(
\mathcal C_{\mathrm{bulk}},
\{\mathcal C_{p_1},\mathcal C_{p_2}\},
\{(\Phi_1,\Psi_1),(\Phi_2,\Psi_2)\},
Sh(S_\Sigma)
\Bigr),
\qquad
Sh(S_\Sigma)\cong \mathcal P.
\]
\end{proposition}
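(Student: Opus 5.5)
The plan is to specialize the general results of the preceding sections to $r=2$, in the same way the proof of Proposition~\ref{prop:single-node-example} specializes them to $r=1$. First, Theorem~\ref{thm:finite-node-corrected-extension} gives the extension \eqref{eq:example-two-node-extension}, so that
\[
Q_\Sigma=i_{1*}\Q_{\{p_1\}}\oplus i_{2*}\Q_{\{p_2\}}.
\]
Since Definition~\ref{def:algebraic-state-data-package} indexes $V_\Sigma$ by $\Sigma$, we get $V_\Sigma=\{v_1,v_2\}$.

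Second, apply Lemma~\ref{lem:finite-additivity-perverse-ext} with $r=2$. This splits $E_\Sigma$ into the two local Ext groups. Theorem~\ref{thm:strong-nodewise-coupling} then says each local group is one-dimensional with distinguished generator $e_k$, which yields \eqref{eq:example-two-node-coupling}.

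Third, Proposition~\ref{prop:global-class-expansion} and Definition~\ref{def:coefficient-vector} give a unique expansion
\[
[\mathcal P]_{\mathrm{perv}}=c_1e_1+c_2e_2.
\]
The pair $(c_1,c_2)$ is intrinsic by Proposition~\ref{prop:intrinsicity-coefficient-vector}. Assembling these three pieces gives \eqref{eq:example-two-node-package}.

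For the compatibility clause, specialize Theorem~\ref{thm:finite-node-mhm-lift} to $r=2$ to obtain \eqref{eq:example-two-node-hodge-extension}. Theorem~\ref{thm:realization-compatibility} then gives $\rat(\mathcal P^H)\cong\mathcal P$ and $\rat(Q^H_\Sigma)\cong Q_\Sigma$. By exactness of $\rat$, the realized sequence is \eqref{eq:example-two-node-extension}, so the state data are the realized shadow of the Hodge package, as in Proposition~\ref{prop:state-data-shadow}.

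On the categorical side, Theorem~\ref{thm:finite-node-schober-datum} and Theorem~\ref{thm:localized-sector-theorem} provide exactly the two sectors $\mathcal C_{p_1},\mathcal C_{p_2}$ together with their attachment functors. Theorem~\ref{thm:schober-shadow-theorem} gives $Sh(S_\Sigma)\cong\mathcal P$. Proposition~\ref{prop:schober-compatible-with-corrected-extension} then matches $\mathcal C_{p_k}$ with the summand $i_{k*}\Q_{\{p_k\}}$, and hence with the vertex $v_k$.

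The main obstacle is conceptual rather than computational. One must make sure the labelling is consistent: the index $k$ of $e_k$, of the quotient summand, of the Hodge summand, and of $\mathcal C_{p_k}$ must all refer to the same node $p_k$. Otherwise $(c_1,c_2)$ would be defined only up to swapping the two coordinates. I would handle this by noting two facts. The additivity isomorphism of Lemma~\ref{lem:finite-additivity-perverse-ext} is induced by the canonical summand inclusions $i_{k*}\Q_{\{p_k\}}\hookrightarrow Q_\Sigma$. Realization and shadow preserve these node-indexed summands, by Proposition~\ref{prop:schober-state-data-compatibility}. Everything else is direct specialization of results already stated.
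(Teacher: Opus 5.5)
Your proposal is correct and follows the paper's own proof. The paper likewise obtains $\mathfrak A_\Sigma$ by specializing the finite-node extension, finite additivity, strong nodewise coupling, and unique-expansion results to $r=2$, and it derives the compatibility clause from Theorem~\ref{thm:realization-compatibility} and Theorem~\ref{thm:schober-shadow-theorem}.

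Your explicit check that the index $k$ names the same node $p_k$ throughout is a worthwhile addition that the paper leaves implicit. It covers $e_k$, the summands of $Q_\Sigma$ and $Q_\Sigma^H$, and $\mathcal C_{p_k}$, and it rests on the fact that the summand inclusions induce the additivity isomorphism.
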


\begin{proof}
The stated form of $\mathfrak A_\Sigma$ follows from \eqref{eq:example-two-node-extension}--\eqref{eq:example-two-node-package}. Compatibility with the mixed-Hodge and schober realizations follows from Theorem~\ref{thm:realization-compatibility} and Theorem~\ref{thm:schober-shadow-theorem}.
\end{proof}

\subsection{Three-node degeneration}

We next consider the case
\[
\Sigma=\{p_1,p_2,p_3\}.
\]
Then the corrected finite-node perverse extension becomes
\begin{equation}\label{eq:example-three-node-extension}
0\to IC_{X_0}\to \mathcal P\to \bigoplus_{k=1}^3 i_{k*}\Q_{\{p_k\}}\to 0.
\end{equation}
The localized quotient is therefore
\[
Q_\Sigma=\bigoplus_{k=1}^3 i_{k*}\Q_{\{p_k\}}.
\]
The nodewise coupling space is
\[
E_\Sigma=\Ext^1_{\Perv(X_0;\Q)}(Q_\Sigma,IC_{X_0})
\cong
\bigoplus_{k=1}^3
\Ext^1_{\Perv(X_0;\Q)}(i_{k*}\Q_{\{p_k\}},IC_{X_0})
\cong
\bigoplus_{k=1}^3 \Q e_k
\cong \Q^3;
\]
see \cite[Lemma~5.4, Corollary~5.14]{RahmanMixedHodgeModules}. Hence the corrected global class has the unique expansion
\begin{equation}\label{eq:example-three-node-global-class}
[\mathcal P]_{\mathrm{perv}}=c_1e_1+c_2e_2+c_3e_3,
\qquad
c_k\in\Q,
\end{equation}
and the corresponding coefficient vector is
\begin{equation}\label{eq:example-three-node-coefficient}
c_\Sigma=(c_1,c_2,c_3)\in\Q^3.
\end{equation}

The mixed-Hodge lift is
\begin{equation}\label{eq:example-three-node-hodge-extension}
0\to IC^H_{X_0}\to \mathcal P^H\to \bigoplus_{k=1}^3 i_{k*}\Q^H_{\{p_k\}}(-1)\to 0,
\end{equation}
and the schober realization is
\[
S_\Sigma=
\Bigl(
\mathcal C_{\mathrm{bulk}},
\{\mathcal C_{p_1},\mathcal C_{p_2},\mathcal C_{p_3}\},
\{(\Phi_1,\Psi_1),(\Phi_2,\Psi_2),(\Phi_3,\Psi_3)\},
Sh(S_\Sigma)
\Bigr),
\qquad
Sh(S_\Sigma)\cong \mathcal P;
\]
see \cite{RahmanMixedHodgeModules,RahmanMultiNodeSchoberPaper}. The finite vertex set is therefore
\[
V_\Sigma=\{v_1,v_2,v_3\},
\]
and the algebraic state-data package is
\begin{equation}\label{eq:example-three-node-package}
\mathfrak A_\Sigma=
\bigl(
\{v_1,v_2,v_3\},
\Q e_1\oplus \Q e_2\oplus \Q e_3,
(c_1,c_2,c_3)
\bigr).
\end{equation}

This example makes the pattern completely explicit. In the $r$-node case, the state-data package has the form
\[
V_\Sigma=\{v_1,\dots,v_r\},
\qquad
E_\Sigma\cong \bigoplus_{k=1}^r \Q e_k\cong \Q^r,
\qquad
c_\Sigma=(c_1,\dots,c_r)\in\Q^r.
\]
The one-node, two-node, and three-node cases therefore exhibit the full finite-node state-data extraction rule already at the smallest nontrivial values of $r$.

\begin{proposition}\label{prop:three-node-example}
Let $\pi:\mathcal X\to\Delta$ be a finite-node conifold degeneration with node set $\Sigma=\{p_1,p_2,p_3\}$. Then the associated algebraic state-data package is
\[
\mathfrak A_\Sigma=
\bigl(
\{v_1,v_2,v_3\},
\Q e_1\oplus \Q e_2\oplus \Q e_3,
(c_1,c_2,c_3)
\bigr),
\]
where $e_1,e_2,e_3$ are the distinguished nodewise generators and $(c_1,c_2,c_3)\in\Q^3$ is the unique coefficient vector defined by
\[
[\mathcal P]_{\mathrm{perv}}=c_1e_1+c_2e_2+c_3e_3.
\]
Moreover, this package is compatible with the mixed-Hodge lift
\[
0\to IC^H_{X_0}\to \mathcal P^H\to \bigoplus_{k=1}^3 i_{k*}\Q^H_{\{p_k\}}(-1)\to 0
\]
and with the three-node schober realization
\[
S_\Sigma=
\Bigl(
\mathcal C_{\mathrm{bulk}},
\{\mathcal C_{p_1},\mathcal C_{p_2},\mathcal C_{p_3}\},
\{(\Phi_1,\Psi_1),(\Phi_2,\Psi_2),(\Phi_3,\Psi_3)\},
Sh(S_\Sigma)
\Bigr),
\qquad
Sh(S_\Sigma)\cong \mathcal P.
\]
\end{proposition}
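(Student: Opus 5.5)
The plan is to specialize the general $r$-node machinery to $r=3$, in the same way Propositions~\ref{prop:single-node-example} and \ref{prop:two-node-example} were obtained. First, Theorem~\ref{thm:finite-node-corrected-extension} with $\Sigma=\{p_1,p_2,p_3\}$ gives the extension \eqref{eq:example-three-node-extension}. This identifies $Q_\Sigma=\bigoplus_{k=1}^3 i_{k*}\Q_{\{p_k\}}$ as in Definition~\ref{def:finite-localized-quotient}. By construction, $V_\Sigma=\{v_1,v_2,v_3\}$ is the vertex set indexed by $\Sigma$ (Table~\ref{tab:state-data-dictionary}).

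Second, we compute $E_\Sigma$. Lemma~\ref{lem:finite-additivity-perverse-ext} splits $E_\Sigma$ into three summands $\Ext^1_{\Perv(X_0;\Q)}(i_{k*}\Q_{\{p_k\}},IC_{X_0})$. Theorem~\ref{thm:strong-nodewise-coupling} shows that each summand is one-dimensional with distinguished generator $e_k$. Together these give $E_\Sigma\cong\Q e_1\oplus\Q e_2\oplus\Q e_3$.

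Third, Proposition~\ref{prop:global-class-expansion} expands $[\mathcal P]_{\mathrm{perv}}$ uniquely in this basis, which is \eqref{eq:example-three-node-global-class}. Definition~\ref{def:coefficient-vector} then yields $c_\Sigma=(c_1,c_2,c_3)$. Proposition~\ref{prop:intrinsicity-coefficient-vector} guarantees that these coefficients involve no further choices. Assembling these steps via Definition~\ref{def:algebraic-state-data-package} gives \eqref{eq:example-three-node-package}.

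For the compatibility clauses, we specialize Theorem~\ref{thm:finite-node-mhm-lift} to obtain \eqref{eq:example-three-node-hodge-extension}. Theorem~\ref{thm:realization-compatibility} then shows that applying $\rat$ recovers \eqref{eq:example-three-node-extension}, with $\rat(Q^H_\Sigma)\cong Q_\Sigma$. On the categorical side, Theorems~\ref{thm:finite-node-schober-datum} and \ref{thm:localized-sector-theorem} supply exactly three localized sectors $\mathcal C_{p_1},\mathcal C_{p_2},\mathcal C_{p_3}$ with their attachment pairs. Theorem~\ref{thm:schober-shadow-theorem} gives $Sh(S_\Sigma)\cong\mathcal P$. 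Finally, Proposition~\ref{prop:schober-state-data-compatibility} confirms that the same index set $\{1,2,3\}$ labels the summands on all three levels.

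The only point needing care is the identification of the decomposition in Lemma~\ref{lem:finite-additivity-perverse-ext} with the basis of Theorem~\ref{thm:strong-nodewise-coupling}. We need each $e_k$ to sit in the $k$-th summand through the canonical inclusion $i_{k*}\Q_{\{p_k\}}\hookrightarrow Q_\Sigma$, so that the coordinates $c_k$ are genuinely nodewise. I would handle this by noting that the additivity isomorphism is induced by these canonical inclusions and projections of the direct sum, and that these are compatible with the indexing by $\Sigma$. Beyond this, the argument is a direct substitution of $r=3$ into the general results.
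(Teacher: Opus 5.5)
Your proposal is correct and follows essentially the same route as the paper. The paper's proof specializes the general $r$-node results (finite additivity of $E_\Sigma$, Theorem~\ref{thm:strong-nodewise-coupling}, and the unique expansion of $[\mathcal P]_{\mathrm{perv}}$) to $r=3$ through the displayed equations preceding the proposition, then cites Theorem~\ref{thm:realization-compatibility} and Theorem~\ref{thm:schober-shadow-theorem} for the compatibility clauses, exactly as you do; your extra care about matching the additivity isomorphism with the basis $\{e_k\}$ is already built into Theorem~\ref{thm:strong-nodewise-coupling}.
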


\begin{proof}
The description of $\mathfrak A_\Sigma$ follows from \eqref{eq:example-three-node-extension}--\eqref{eq:example-three-node-package}. Compatibility with the mixed-Hodge and schober realizations again follows from Theorem~\ref{thm:realization-compatibility} and Theorem~\ref{thm:schober-shadow-theorem}.
\end{proof}

\subsection{Formal extraction pattern in the general finite-node case}

The preceding examples suggest the general pattern proved in the body of the paper. Let
\[
\Sigma=\{p_1,\dots,p_r\}
\]
be the finite node set of the degeneration. Then:
\begin{enumerate}
\item the localized quotient is
\[
Q_\Sigma=\bigoplus_{k=1}^r i_{k*}\Q_{\{p_k\}};
\]
\item the nodewise coupling space is
\[
E_\Sigma=\Ext^1_{\Perv(X_0;\Q)}(Q_\Sigma,IC_{X_0})
\cong \bigoplus_{k=1}^r \Q e_k\cong \Q^r;
\]
\item the corrected global class has unique expansion
\[
[\mathcal P]_{\mathrm{perv}}=\sum_{k=1}^r c_k e_k;
\]
\item the coefficient vector is
\[
c_\Sigma=(c_1,\dots,c_r)\in\Q^r;
\]
\item the finite vertex set is
\[
V_\Sigma=\{v_1,\dots,v_r\};
\]
\item the algebraic state-data package is
\[
\mathfrak A_\Sigma=(V_\Sigma,E_\Sigma,c_\Sigma).
\]
\end{enumerate}

The content of the examples is therefore not the production of \textit{ad hoc} toy formulas, but the explicit display of the extraction rule in the smallest finite-node cases. They show concretely how the corrected perverse extension, the mixed-Hodge-module lift, and the schober shadow all determine the same finite algebraic state-data package. This is exactly the finite algebraic layer that will support the later incidence, quiver, stability, and BPS developments.

%
%
\printbibliography
\end{document}